\documentclass[a4paper, 11pt]{amsart}
\usepackage[a4paper,margin=3cm]{geometry}

\usepackage{amssymb,amsfonts,amsmath,amsthm,yhmath, stmaryrd}
\usepackage{tikz}
\usepackage{wrapfig}
\usetikzlibrary{arrows.meta,decorations.markings}
\definecolor{FigureBlue}{rgb}{0.165,0.510,0.980}
\definecolor{LinkBlue}{rgb}{0.0,0.0,0.5}
\newtheorem{thm}{Theorem}
\newtheorem{lemma}[thm]{Lemma}
\newtheorem{cor}[thm]{Corollary}
\theoremstyle{definition}
\newtheorem{prop}[thm]{Proposition}
\newtheorem{remark}[thm]{Remark}
\theoremstyle{plain}
\def\RR{\mathbb{R}}
\def\ZZ{\mathbb{Z}}

\def\QQ{\mathbb{Q}}

\def\NN{\mathbb{N}}
\def\muturn{\mu_{\hbox{turn}}}
\newcommand{\Kappa}{\mathcal{K}}

\def\Mcal{\mathcal{M}}
\def\Ocal{\mathcal{O}}

\newcommand{\MRnorm}[1]{\left\lVert #1\right\rVert_{\mathrm{X}}}
\newcommand{\infnorm}[1]{\left\lVert #1\right\rVert_{\infty}}

\newcommand{\abs}[1]{\left\lvert #1\right\rvert}
\newcommand{\vol}[1]{\operatorname{vol}#1}
\newcommand{\diam}[1]{\operatorname{diam}#1}

\DeclareMathOperator{\trace}{Tr}
\DeclareMathOperator{\norm}{N}

\def\MR#1{\href{https://mathscinet.ams.org/mathscinet/relay-station?mr=#1}{MR#1}} 
\usepackage{mathtools}
\mathtoolsset{showonlyrefs}

\usepackage[
   pdfstartview=FitH,
   pdfpagemode=UseNone,
   colorlinks=true,
   allcolors=LinkBlue,
]{hyperref}
\newcommand{\MRBallBoundary}{%
  (0.519521730308756884400330651529448941935548067,0.000000000000000000000000000000000000000000000) coordinate (MRball-1-0) 
  -- (0.526846097343530189103104624263377640778963633,0.037631864095966442078793187447384117198497402) coordinate (MRball-14-1) 
  -- (0.527418073647430503643650573808505177148067401,0.040570621049802346434126967216038859780620569) coordinate (MRball-13-1) 
  -- (0.528086951002595462711202682071936815647387846,0.044007245916882955225933556839328067970615654) coordinate (MRball-12-1) 
  -- (0.528879632972240553162026100304349907958246270,0.048079966633840050287456918209486355268931479) coordinate (MRball-11-1) 
  -- (0.529833998002099238611748077020204915032999759,0.052983399800209923861174807702020491503299976) coordinate (MRball-10-1) 
  -- (0.531005130353103119834749646957188693521722138,0.059000570039233679981638849661909854835746904) coordinate (MRball-9-1) 
  -- (0.532476334169215342951909054565463300286074081,0.066559541771151917868988631820682912535759260) coordinate (MRball-8-1) 
  -- (0.534379806106011442682265518122456191584111434,0.076339972300858777526037931160350884512015919) coordinate (MRball-7-1) 
  -- (0.535557591296605045618463937717183099268718551,0.082393475584093083941302144264182015272110546) coordinate (MRball-13-2) 
  -- (0.536938252694525464289533392043224242467016921,0.089489708782420910714922232007204040411169487) coordinate (MRball-6-1) 
  -- (0.538576189996503483005045268935364267783495359,0.097922943635727906000917321624611685051544611) coordinate (MRball-11-2) 
  -- (0.540554953483433104885693085991909268071174038,0.108110990696686620977138617198381853614234808) coordinate (MRball-5-1) 
  -- (0.542103781718258844061928158134859442960445055,0.116165096082484038013270319600327023491523940) coordinate (MRball-14-3) 
  -- (0.542968083088059569885806403773548782922430901,0.120659574019568793307956978616344173982762422) coordinate (MRball-9-2) 
  -- (0.543901945103062936287348547185389844270128195,0.125515833485322216066311203196628425600798814) coordinate (MRball-13-3) 
  -- (0.546014925347583108925018390056453813515839022,0.136503731336895777231254597514113453378959755) coordinate (MRball-4-1) 
  -- (0.548389324493065640142701258966104301767622331,0.149560724861745174584373070627119355027533363) coordinate (MRball-11-3) 
  -- (0.549755418133313758192063726361281838059235646,0.157072976609518216626303921817509096588353042) coordinate (MRball-7-2) 
  -- (0.551265286106721020743332982099411050784101434,0.165379585832016306222999894629823315235230430) coordinate (MRball-10-3) 
  -- (0.552081729725039789694966490344352569109809033,0.169871301453858396829220458567493098187633549) coordinate (MRball-13-4) 
  -- (0.554820761022722509499003315631330519773714161,0.184940253674240836499667771877110173257904720) coordinate (MRball-3-1) 
  -- (0.556593599297261798099717039871956391354093908,0.198783428320450642178470371382841568340747824) coordinate (MRball-14-5) 
  -- (0.557079068655728892798269367283279542830616151,0.202574206783901415563007042648465288302042237) coordinate (MRball-11-4) 
  -- (0.557930680095718723515006804480638716730810108,0.209224005035894521318127551680239518774053791) coordinate (MRball-8-3) 
  -- (0.558653196889898486901395168126903837358327650,0.214866614188422494962075064664193783599356788) coordinate (MRball-13-5) 
  -- (0.559813124126872786755840902881935562491859993,0.223925249650749114702336361152774224996743997) coordinate (MRball-5-2) 
  -- (0.561047426167051703844817489790681434139747313,0.233769760902938209935340620746117264224894714) coordinate (MRball-12-5) 
  -- (0.561932407935921661617489361889114596667503816,0.240828174829680712121781155095334827143215921) coordinate (MRball-7-3) 
  -- (0.563115639172105336662626541080197361549888915,0.250273617409824594072278462702309938466617296) coordinate (MRball-9-4) 
  -- (0.563871175924097787224163329067816687074810262,0.256305079965498994192801513212643948670368301) coordinate (MRball-11-5) 
  -- (0.564395427932577660605210232978648363205293637,0.260490197507343535663943184451683859940904755) coordinate (MRball-13-6) 
  -- (0.567296328553255492028622702576643118931495396,0.283648164276627746014311351288321559465747698) coordinate (MRball-2-1) 
  -- (0.564395427932577660605210232978648363205293637,0.303905230425234124941267048526964503264388881) coordinate (MRball-13-7) 
  -- (0.563871175924097787224163329067816687074810262,0.307566095958598793031361815855172738404441961) coordinate (MRball-11-6) 
  -- (0.563115639172105336662626541080197361549888915,0.312842021762280742590348078377887423083271620) coordinate (MRball-9-5) 
  -- (0.561932407935921661617489361889114596667503816,0.321104233106240949495708206793779769524287895) coordinate (MRball-7-4) 
  -- (0.561047426167051703844817489790681434139747313,0.327277665264113493909476869044564169914852599) coordinate (MRball-12-7) 
  -- (0.559813124126872786755840902881935562491859993,0.335887874476123672053504541729161337495115996) coordinate (MRball-5-3) 
  -- (0.558653196889898486901395168126903837358327650,0.343786582701475991939320103462710053758970861) coordinate (MRball-13-8) 
  -- (0.557930680095718723515006804480638716730810108,0.348706675059824202196879252800399197956756318) coordinate (MRball-8-5) 
  -- (0.557079068655728892798269367283279542830616151,0.354504861871827477235262324634814254528573914) coordinate (MRball-11-7) 
  -- (0.556593599297261798099717039871956391354093908,0.357810170976811155921246668489114823013346083) coordinate (MRball-14-9) 
  -- (0.554820761022722509499003315631330519773714161,0.369880507348481672999335543754220346515809441) coordinate (MRball-3-2) 
  -- (0.552081729725039789694966490344352569109809033,0.382210428271181392865746031776859470922175484) coordinate (MRball-13-9) 
  -- (0.551265286106721020743332982099411050784101434,0.385885700274704714520333087469587735548871004) coordinate (MRball-10-7) 
  -- (0.549755418133313758192063726361281838059235646,0.392682441523795541565759804543772741470882604) coordinate (MRball-7-5) 
  -- (0.548389324493065640142701258966104301767622331,0.398828599631320465558328188338984946740088968) coordinate (MRball-11-8) 
  -- (0.546014925347583108925018390056453813515839022,0.409511194010687331693763792542340360136879266) coordinate (MRball-4-3) 
  -- (0.543901945103062936287348547185389844270128195,0.418386111617740720221037343988761418669329380) coordinate (MRball-13-10) 
  -- (0.542968083088059569885806403773548782922430901,0.422308509068490776577849425157204608939668478) coordinate (MRball-9-7) 
  -- (0.542103781718258844061928158134859442960445055,0.425938685635774806048657838534532419468921115) coordinate (MRball-14-11) 
  -- (0.540554953483433104885693085991909268071174038,0.432443962786746483908554468793527414456939230) coordinate (MRball-5-4) 
  -- (0.538576189996503483005045268935364267783495359,0.440653246360775577004127947310752582731950748) coordinate (MRball-11-9) 
  -- (0.536938252694525464289533392043224242467016921,0.447448543912104553574611160036020202055847434) coordinate (MRball-6-5) 
  -- (0.535557591296605045618463937717183099268718551,0.453164115712511961677161793453001083996608005) coordinate (MRball-13-11) 
  -- (0.534379806106011442682265518122456191584111434,0.458039833805152665156227586962105307072095515) coordinate (MRball-7-6) 
  -- (0.532476334169215342951909054565463300286074081,0.465916792398063425082920422744780387750314821) coordinate (MRball-8-7) 
  -- (0.531005130353103119834749646957188693521722138,0.472004560313869439853110797295278838685975234) coordinate (MRball-9-8) 
  -- (0.529833998002099238611748077020204915032999759,0.476850598201889314750573269318184423529699783) coordinate (MRball-10-9) 
  -- (0.528879632972240553162026100304349907958246270,0.480799666338400502874569182094863552689314791) coordinate (MRball-11-10) 
  -- (0.528086951002595462711202682071936815647387846,0.484079705085712507485269125232608747676772192) coordinate (MRball-12-11) 
  -- (0.527418073647430503643650573808505177148067401,0.486847452597628157209523606592466317367446831) coordinate (MRball-13-12) 
  -- (0.526846097343530189103104624263377640778963633,0.489214233247563747024311436815993523580466231) coordinate (MRball-14-13) 
  -- (0.519521730308756884400330651529448941935548067,0.519521730308756884400330651529448941935548067) coordinate (MRball-1-1) 
  -- (0.489214233247563747024311436815993523580466231,0.526846097343530189103104624263377640778963633) coordinate (MRball-13-14) 
  -- (0.486847452597628157209523606592466317367446831,0.527418073647430503643650573808505177148067401) coordinate (MRball-12-13) 
  -- (0.484079705085712507485269125232608747676772192,0.528086951002595462711202682071936815647387846) coordinate (MRball-11-12) 
  -- (0.480799666338400502874569182094863552689314791,0.528879632972240553162026100304349907958246270) coordinate (MRball-10-11) 
  -- (0.476850598201889314750573269318184423529699783,0.529833998002099238611748077020204915032999759) coordinate (MRball-9-10) 
  -- (0.472004560313869439853110797295278838685975234,0.531005130353103119834749646957188693521722138) coordinate (MRball-8-9) 
  -- (0.465916792398063425082920422744780387750314821,0.532476334169215342951909054565463300286074081) coordinate (MRball-7-8) 
  -- (0.458039833805152665156227586962105307072095515,0.534379806106011442682265518122456191584111434) coordinate (MRball-6-7) 
  -- (0.453164115712511961677161793453001083996608005,0.535557591296605045618463937717183099268718551) coordinate (MRball-11-13) 
  -- (0.447448543912104553574611160036020202055847434,0.536938252694525464289533392043224242467016921) coordinate (MRball-5-6) 
  -- (0.440653246360775577004127947310752582731950748,0.538576189996503483005045268935364267783495359) coordinate (MRball-9-11) 
  -- (0.432443962786746483908554468793527414456939230,0.540554953483433104885693085991909268071174038) coordinate (MRball-4-5) 
  -- (0.425938685635774806048657838534532419468921115,0.542103781718258844061928158134859442960445055) coordinate (MRball-11-14) 
  -- (0.422308509068490776577849425157204608939668478,0.542968083088059569885806403773548782922430901) coordinate (MRball-7-9) 
  -- (0.418386111617740720221037343988761418669329380,0.543901945103062936287348547185389844270128195) coordinate (MRball-10-13) 
  -- (0.409511194010687331693763792542340360136879266,0.546014925347583108925018390056453813515839022) coordinate (MRball-3-4) 
  -- (0.398828599631320465558328188338984946740088968,0.548389324493065640142701258966104301767622331) coordinate (MRball-8-11) 
  -- (0.392682441523795541565759804543772741470882604,0.549755418133313758192063726361281838059235646) coordinate (MRball-5-7) 
  -- (0.385885700274704714520333087469587735548871004,0.551265286106721020743332982099411050784101434) coordinate (MRball-7-10) 
  -- (0.382210428271181392865746031776859470922175484,0.552081729725039789694966490344352569109809033) coordinate (MRball-9-13) 
  -- (0.369880507348481672999335543754220346515809441,0.554820761022722509499003315631330519773714161) coordinate (MRball-2-3) 
  -- (0.357810170976811155921246668489114823013346083,0.556593599297261798099717039871956391354093908) coordinate (MRball-9-14) 
  -- (0.354504861871827477235262324634814254528573914,0.557079068655728892798269367283279542830616151) coordinate (MRball-7-11) 
  -- (0.348706675059824202196879252800399197956756318,0.557930680095718723515006804480638716730810108) coordinate (MRball-5-8) 
  -- (0.343786582701475991939320103462710053758970861,0.558653196889898486901395168126903837358327650) coordinate (MRball-8-13) 
  -- (0.335887874476123672053504541729161337495115996,0.559813124126872786755840902881935562491859993) coordinate (MRball-3-5) 
  -- (0.327277665264113493909476869044564169914852599,0.561047426167051703844817489790681434139747313) coordinate (MRball-7-12) 
  -- (0.321104233106240949495708206793779769524287895,0.561932407935921661617489361889114596667503816) coordinate (MRball-4-7) 
  -- (0.312842021762280742590348078377887423083271620,0.563115639172105336662626541080197361549888915) coordinate (MRball-5-9) 
  -- (0.307566095958598793031361815855172738404441961,0.563871175924097787224163329067816687074810262) coordinate (MRball-6-11) 
  -- (0.303905230425234124941267048526964503264388881,0.564395427932577660605210232978648363205293637) coordinate (MRball-7-13) 
  -- (0.283648164276627746014311351288321559465747698,0.567296328553255492028622702576643118931495396) coordinate (MRball-1-2) 
  -- (0.260490197507343535663943184451683859940904755,0.564395427932577660605210232978648363205293637) coordinate (MRball-6-13) 
  -- (0.256305079965498994192801513212643948670368301,0.563871175924097787224163329067816687074810262) coordinate (MRball-5-11) 
  -- (0.250273617409824594072278462702309938466617296,0.563115639172105336662626541080197361549888915) coordinate (MRball-4-9) 
  -- (0.240828174829680712121781155095334827143215921,0.561932407935921661617489361889114596667503816) coordinate (MRball-3-7) 
  -- (0.233769760902938209935340620746117264224894714,0.561047426167051703844817489790681434139747313) coordinate (MRball-5-12) 
  -- (0.223925249650749114702336361152774224996743997,0.559813124126872786755840902881935562491859993) coordinate (MRball-2-5) 
  -- (0.214866614188422494962075064664193783599356788,0.558653196889898486901395168126903837358327650) coordinate (MRball-5-13) 
  -- (0.209224005035894521318127551680239518774053791,0.557930680095718723515006804480638716730810108) coordinate (MRball-3-8) 
  -- (0.202574206783901415563007042648465288302042237,0.557079068655728892798269367283279542830616151) coordinate (MRball-4-11) 
  -- (0.198783428320450642178470371382841568340747824,0.556593599297261798099717039871956391354093908) coordinate (MRball-5-14) 
  -- (0.184940253674240836499667771877110173257904720,0.554820761022722509499003315631330519773714161) coordinate (MRball-1-3) 
  -- (0.169871301453858396829220458567493098187633549,0.552081729725039789694966490344352569109809033) coordinate (MRball-4-13) 
  -- (0.165379585832016306222999894629823315235230430,0.551265286106721020743332982099411050784101434) coordinate (MRball-3-10) 
  -- (0.157072976609518216626303921817509096588353042,0.549755418133313758192063726361281838059235646) coordinate (MRball-2-7) 
  -- (0.149560724861745174584373070627119355027533363,0.548389324493065640142701258966104301767622331) coordinate (MRball-3-11) 
  -- (0.136503731336895777231254597514113453378959755,0.546014925347583108925018390056453813515839022) coordinate (MRball-1-4) 
  -- (0.125515833485322216066311203196628425600798814,0.543901945103062936287348547185389844270128195) coordinate (MRball-3-13) 
  -- (0.120659574019568793307956978616344173982762422,0.542968083088059569885806403773548782922430901) coordinate (MRball-2-9) 
  -- (0.116165096082484038013270319600327023491523940,0.542103781718258844061928158134859442960445055) coordinate (MRball-3-14) 
  -- (0.108110990696686620977138617198381853614234808,0.540554953483433104885693085991909268071174038) coordinate (MRball-1-5) 
  -- (0.097922943635727906000917321624611685051544611,0.538576189996503483005045268935364267783495359) coordinate (MRball-2-11) 
  -- (0.089489708782420910714922232007204040411169487,0.536938252694525464289533392043224242467016921) coordinate (MRball-1-6) 
  -- (0.082393475584093083941302144264182015272110546,0.535557591296605045618463937717183099268718551) coordinate (MRball-2-13) 
  -- (0.076339972300858777526037931160350884512015919,0.534379806106011442682265518122456191584111434) coordinate (MRball-1-7) 
  -- (0.066559541771151917868988631820682912535759260,0.532476334169215342951909054565463300286074081) coordinate (MRball-1-8) 
  -- (0.059000570039233679981638849661909854835746904,0.531005130353103119834749646957188693521722138) coordinate (MRball-1-9) 
  -- (0.052983399800209923861174807702020491503299976,0.529833998002099238611748077020204915032999759) coordinate (MRball-1-10) 
  -- (0.048079966633840050287456918209486355268931479,0.528879632972240553162026100304349907958246270) coordinate (MRball-1-11) 
  -- (0.044007245916882955225933556839328067970615654,0.528086951002595462711202682071936815647387846) coordinate (MRball-1-12) 
  -- (0.040570621049802346434126967216038859780620569,0.527418073647430503643650573808505177148067401) coordinate (MRball-1-13) 
  -- (0.037631864095966442078793187447384117198497402,0.526846097343530189103104624263377640778963633) coordinate (MRball-1-14) 
  -- (0.000000000000000000000000000000000000000000000,0.519521730308756884400330651529448941935548067) coordinate (MRball-0-1) 
  -- (-0.035090092473999889877078045312677183342553563,0.491261294635998458279092634377480566795749883) coordinate (MRball--1-14) 
  -- (-0.037631864095966442078793187447384117198497402,0.489214233247563747024311436815993523580466231) coordinate (MRball--1-13) 
  -- (-0.040570621049802346434126967216038859780620569,0.486847452597628157209523606592466317367446831) coordinate (MRball--1-12) 
  -- (-0.044007245916882955225933556839328067970615654,0.484079705085712507485269125232608747676772192) coordinate (MRball--1-11) 
  -- (-0.048079966633840050287456918209486355268931479,0.480799666338400502874569182094863552689314791) coordinate (MRball--1-10) 
  -- (-0.052983399800209923861174807702020491503299976,0.476850598201889314750573269318184423529699783) coordinate (MRball--1-9) 
  -- (-0.059000570039233679981638849661909854835746904,0.472004560313869439853110797295278838685975234) coordinate (MRball--1-8) 
  -- (-0.066559541771151917868988631820682912535759260,0.465916792398063425082920422744780387750314821) coordinate (MRball--1-7) 
  -- (-0.071115057432706890291174417736658770574503586,0.462247873312594786892633715288282008734273309) coordinate (MRball--2-13) 
  -- (-0.076339972300858777526037931160350884512015919,0.458039833805152665156227586962105307072095515) coordinate (MRball--1-6) 
  -- (-0.082393475584093083941302144264182015272110546,0.453164115712511961677161793453001083996608005) coordinate (MRball--2-11) 
  -- (-0.089489708782420910714922232007204040411169487,0.447448543912104553574611160036020202055847434) coordinate (MRball--1-5) 
  -- (-0.094940639051445393996328810644797268588074729,0.443056315573411838649534449675720586744348736) coordinate (MRball--3-14) 
  -- (-0.097922943635727906000917321624611685051544611,0.440653246360775577004127947310752582731950748) coordinate (MRball--2-9) 
  -- (-0.101098686672216864061642475222283052861061363,0.438094308912939744267117392629893229064599238) coordinate (MRball--3-13) 
  -- (-0.108110990696686620977138617198381853614234808,0.432443962786746483908554468793527414456939230) coordinate (MRball--1-4) 
  -- (-0.116165096082484038013270319600327023491523940,0.425938685635774806048657838534532419468921115) coordinate (MRball--3-11) 
  -- (-0.120659574019568793307956978616344173982762422,0.422308509068490776577849425157204608939668478) coordinate (MRball--2-7) 
  -- (-0.125515833485322216066311203196628425600798814,0.418386111617740720221037343988761418669329380) coordinate (MRball--3-10) 
  -- (-0.128093563368515089035041645277317785938442499,0.416304080947674039363885347151282804299938121) coordinate (MRball--4-13) 
  -- (-0.136503731336895777231254597514113453378959755,0.409511194010687331693763792542340360136879266) coordinate (MRball--1-3) 
  -- (-0.144049227661203966205414378937943452703860092,0.403337837451371105375160261026241667570808257) coordinate (MRball--5-14) 
  -- (-0.146067769214823061761744470380884913627140448,0.401686365340763419844797293547433512474636233) coordinate (MRball--4-11) 
  -- (-0.149560724861745174584373070627119355027533363,0.398828599631320465558328188338984946740088968) coordinate (MRball--3-8) 
  -- (-0.152477715445555076182924212261297743413857326,0.396442060158443198075602951879374132876029047) coordinate (MRball--5-13) 
  -- (-0.157072976609518216626303921817509096588353042,0.392682441523795541565759804543772741470882604) coordinate (MRball--2-5) 
  -- (-0.161953697792610330222126410877006635878796915,0.388688874702264792533103386104815926109112596) coordinate (MRball--5-12) 
  -- (-0.165379585832016306222999894629823315235230430,0.385885700274704714520333087469587735548871004) coordinate (MRball--3-7) 
  -- (-0.169871301453858396829220458567493098187633549,0.382210428271181392865746031776859470922175484) coordinate (MRball--4-9) 
  -- (-0.172685386309666835350700810148574056079113147,0.379907849881267037771541782326862923374048923) coordinate (MRball--5-11) 
  -- (-0.174613819187903961747297383354181770643524734,0.378329941573791917119144330600727169727636924) coordinate (MRball--6-13) 
  -- (-0.184940253674240836499667771877110173257904720,0.369880507348481672999335543754220346515809441) coordinate (MRball--1-2) 
  -- (-0.194621195770247421632909960783796973584797273,0.361439363573316640175404212884194379514623506) coordinate (MRball--7-13) 
  -- (-0.196334090360518938488608468761712871398339567,0.359945832327618053895782192729806930896955872) coordinate (MRball--6-11) 
  -- (-0.198783428320450642178470371382841568340747824,0.357810170976811155921246668489114823013346083) coordinate (MRball--5-9) 
  -- (-0.202574206783901415563007042648465288302042237,0.354504861871827477235262324634814254528573914) coordinate (MRball--4-7) 
  -- (-0.205371645987043651216263463943232009089706642,0.352065678834931973513594509616969158439497100) coordinate (MRball--7-12) 
  -- (-0.209224005035894521318127551680239518774053791,0.348706675059824202196879252800399197956756318) coordinate (MRball--3-5) 
  -- (-0.212715326419959361740273861663386252969010023,0.345662405432433962827945025203002661074641288) coordinate (MRball--8-13) 
  -- (-0.214866614188422494962075064664193783599356788,0.343786582701475991939320103462710053758970861) coordinate (MRball--5-8) 
  -- (-0.217379134181769143451868376293037015191105452,0.341595782285637225424364591317629595300308567) coordinate (MRball--7-11) 
  -- (-0.218800534668312700596715653452027268870284337,0.340356387261819756483779905369820196020442302) coordinate (MRball--9-14) 
  -- (-0.223925249650749114702336361152774224996743997,0.335887874476123672053504541729161337495115996) coordinate (MRball--2-3) 
  -- (-0.229289606886223787241152398177207517570076433,0.331196098835656581570553464033744192045665959) coordinate (MRball--9-13) 
  -- (-0.230869812976842507349876598587420074671929908,0.329814018538346439071252283696314392388471297) coordinate (MRball--7-10) 
  -- (-0.233769760902938209935340620746117264224894714,0.327277665264113493909476869044564169914852599) coordinate (MRball--5-7) 
  -- (-0.236367641088999059398296436991696777155032625,0.325005506497373706672657600863583068588169860) coordinate (MRball--8-11) 
  -- (-0.240828174829680712121781155095334827143215921,0.321104233106240949495708206793779769524287895) coordinate (MRball--3-4) 
  -- (-0.244519486431325909209436967856386435997751507,0.317875332360723681972268058213302366797076959) coordinate (MRball--10-13) 
  -- (-0.246136346284567169714240282372281374202016367,0.316461016651586361061166077335790338259735329) coordinate (MRball--7-9) 
  -- (-0.247624886274565107053343486191879631555371556,0.315158946167628318067891709698755894706836526) coordinate (MRball--11-14) 
  -- (-0.250273617409824594072278462702309938466617296,0.312842021762280742590348078377887423083271620) coordinate (MRball--4-5) 
  -- (-0.253588920139773620940215038598087916825170695,0.309942013504167758926929491619885231675208628) coordinate (MRball--9-11) 
  -- (-0.256305079965498994192801513212643948670368301,0.307566095958598793031361815855172738404441961) coordinate (MRball--5-6) 
  -- (-0.258571053035912916324463319985415868353072568,0.305583971769715264747093014528218753508176671) coordinate (MRball--11-13) 
  -- (-0.260490197507343535663943184451683859940904755,0.303905230425234124941267048526964503264388881) coordinate (MRball--6-7) 
  -- (-0.263564233040238078544563906490042764995865646,0.301216266331700661193787321702906017138132167) coordinate (MRball--7-8) 
  -- (-0.265917797878547109099414312796625673834339591,0.299157522613365497736841101896203883063632039) coordinate (MRball--8-9) 
  -- (-0.267777611771587492121391143495745269944515863,0.297530679746208324579323492773050299938350959) coordinate (MRball--9-10) 
  -- (-0.269284298979181813201161635542805370897977719,0.296212728877099994521277799097085907987775491) coordinate (MRball--10-11) 
  -- (-0.270529712661283145792286634845741114441632417,0.295123322903217977227949056195353943027235364) coordinate (MRball--11-12) 
  -- (-0.271576391316405008390858989376477173976763223,0.294207757259438759090097238491183605141493491) coordinate (MRball--12-13) 
  -- (-0.272468389261054077772558961754701097133845036,0.293427496127289006831986574197370412297986962) coordinate (MRball--13-14) 
  -- (-0.283648164276627746014311351288321559465747698,0.283648164276627746014311351288321559465747698) coordinate (MRball--1-1) 
  -- (-0.293427496127289006831986574197370412297986962,0.272468389261054077772558961754701097133845036) coordinate (MRball--14-13) 
  -- (-0.294207757259438759090097238491183605141493491,0.271576391316405008390858989376477173976763223) coordinate (MRball--13-12) 
  -- (-0.295123322903217977227949056195353943027235364,0.270529712661283145792286634845741114441632417) coordinate (MRball--12-11) 
  -- (-0.296212728877099994521277799097085907987775491,0.269284298979181813201161635542805370897977719) coordinate (MRball--11-10) 
  -- (-0.297530679746208324579323492773050299938350959,0.267777611771587492121391143495745269944515863) coordinate (MRball--10-9) 
  -- (-0.299157522613365497736841101896203883063632039,0.265917797878547109099414312796625673834339591) coordinate (MRball--9-8) 
  -- (-0.301216266331700661193787321702906017138132167,0.263564233040238078544563906490042764995865646) coordinate (MRball--8-7) 
  -- (-0.303905230425234124941267048526964503264388881,0.260490197507343535663943184451683859940904755) coordinate (MRball--7-6) 
  -- (-0.305583971769715264747093014528218753508176671,0.258571053035912916324463319985415868353072568) coordinate (MRball--13-11) 
  -- (-0.307566095958598793031361815855172738404441961,0.256305079965498994192801513212643948670368301) coordinate (MRball--6-5) 
  -- (-0.309942013504167758926929491619885231675208628,0.253588920139773620940215038598087916825170695) coordinate (MRball--11-9) 
  -- (-0.312842021762280742590348078377887423083271620,0.250273617409824594072278462702309938466617296) coordinate (MRball--5-4) 
  -- (-0.315158946167628318067891709698755894706836526,0.247624886274565107053343486191879631555371556) coordinate (MRball--14-11) 
  -- (-0.316461016651586361061166077335790338259735329,0.246136346284567169714240282372281374202016367) coordinate (MRball--9-7) 
  -- (-0.317875332360723681972268058213302366797076959,0.244519486431325909209436967856386435997751507) coordinate (MRball--13-10) 
  -- (-0.321104233106240949495708206793779769524287895,0.240828174829680712121781155095334827143215921) coordinate (MRball--4-3) 
  -- (-0.325005506497373706672657600863583068588169860,0.236367641088999059398296436991696777155032625) coordinate (MRball--11-8) 
  -- (-0.327277665264113493909476869044564169914852599,0.233769760902938209935340620746117264224894714) coordinate (MRball--7-5) 
  -- (-0.329814018538346439071252283696314392388471297,0.230869812976842507349876598587420074671929908) coordinate (MRball--10-7) 
  -- (-0.331196098835656581570553464033744192045665959,0.229289606886223787241152398177207517570076433) coordinate (MRball--13-9) 
  -- (-0.335887874476123672053504541729161337495115996,0.223925249650749114702336361152774224996743997) coordinate (MRball--3-2) 
  -- (-0.340356387261819756483779905369820196020442302,0.218800534668312700596715653452027268870284337) coordinate (MRball--14-9) 
  -- (-0.341595782285637225424364591317629595300308567,0.217379134181769143451868376293037015191105452) coordinate (MRball--11-7) 
  -- (-0.343786582701475991939320103462710053758970861,0.214866614188422494962075064664193783599356788) coordinate (MRball--8-5) 
  -- (-0.345662405432433962827945025203002661074641288,0.212715326419959361740273861663386252969010023) coordinate (MRball--13-8) 
  -- (-0.348706675059824202196879252800399197956756318,0.209224005035894521318127551680239518774053791) coordinate (MRball--5-3) 
  -- (-0.352065678834931973513594509616969158439497100,0.205371645987043651216263463943232009089706642) coordinate (MRball--12-7) 
  -- (-0.354504861871827477235262324634814254528573914,0.202574206783901415563007042648465288302042237) coordinate (MRball--7-4) 
  -- (-0.357810170976811155921246668489114823013346083,0.198783428320450642178470371382841568340747824) coordinate (MRball--9-5) 
  -- (-0.359945832327618053895782192729806930896955872,0.196334090360518938488608468761712871398339567) coordinate (MRball--11-6) 
  -- (-0.361439363573316640175404212884194379514623506,0.194621195770247421632909960783796973584797273) coordinate (MRball--13-7) 
  -- (-0.369880507348481672999335543754220346515809441,0.184940253674240836499667771877110173257904720) coordinate (MRball--2-1) 
  -- (-0.378329941573791917119144330600727169727636924,0.174613819187903961747297383354181770643524734) coordinate (MRball--13-6) 
  -- (-0.379907849881267037771541782326862923374048923,0.172685386309666835350700810148574056079113147) coordinate (MRball--11-5) 
  -- (-0.382210428271181392865746031776859470922175484,0.169871301453858396829220458567493098187633549) coordinate (MRball--9-4) 
  -- (-0.385885700274704714520333087469587735548871004,0.165379585832016306222999894629823315235230430) coordinate (MRball--7-3) 
  -- (-0.388688874702264792533103386104815926109112596,0.161953697792610330222126410877006635878796915) coordinate (MRball--12-5) 
  -- (-0.392682441523795541565759804543772741470882604,0.157072976609518216626303921817509096588353042) coordinate (MRball--5-2) 
  -- (-0.396442060158443198075602951879374132876029047,0.152477715445555076182924212261297743413857326) coordinate (MRball--13-5) 
  -- (-0.398828599631320465558328188338984946740088968,0.149560724861745174584373070627119355027533363) coordinate (MRball--8-3) 
  -- (-0.401686365340763419844797293547433512474636233,0.146067769214823061761744470380884913627140448) coordinate (MRball--11-4) 
  -- (-0.403337837451371105375160261026241667570808257,0.144049227661203966205414378937943452703860092) coordinate (MRball--14-5) 
  -- (-0.409511194010687331693763792542340360136879266,0.136503731336895777231254597514113453378959755) coordinate (MRball--3-1) 
  -- (-0.416304080947674039363885347151282804299938121,0.128093563368515089035041645277317785938442499) coordinate (MRball--13-4) 
  -- (-0.418386111617740720221037343988761418669329380,0.125515833485322216066311203196628425600798814) coordinate (MRball--10-3) 
  -- (-0.422308509068490776577849425157204608939668478,0.120659574019568793307956978616344173982762422) coordinate (MRball--7-2) 
  -- (-0.425938685635774806048657838534532419468921115,0.116165096082484038013270319600327023491523940) coordinate (MRball--11-3) 
  -- (-0.432443962786746483908554468793527414456939230,0.108110990696686620977138617198381853614234808) coordinate (MRball--4-1) 
  -- (-0.438094308912939744267117392629893229064599238,0.101098686672216864061642475222283052861061363) coordinate (MRball--13-3) 
  -- (-0.440653246360775577004127947310752582731950748,0.097922943635727906000917321624611685051544611) coordinate (MRball--9-2) 
  -- (-0.443056315573411838649534449675720586744348736,0.094940639051445393996328810644797268588074729) coordinate (MRball--14-3) 
  -- (-0.447448543912104553574611160036020202055847434,0.089489708782420910714922232007204040411169487) coordinate (MRball--5-1) 
  -- (-0.453164115712511961677161793453001083996608005,0.082393475584093083941302144264182015272110546) coordinate (MRball--11-2) 
  -- (-0.458039833805152665156227586962105307072095515,0.076339972300858777526037931160350884512015919) coordinate (MRball--6-1) 
  -- (-0.462247873312594786892633715288282008734273309,0.071115057432706890291174417736658770574503586) coordinate (MRball--13-2) 
  -- (-0.465916792398063425082920422744780387750314821,0.066559541771151917868988631820682912535759260) coordinate (MRball--7-1) 
  -- (-0.472004560313869439853110797295278838685975234,0.059000570039233679981638849661909854835746904) coordinate (MRball--8-1) 
  -- (-0.476850598201889314750573269318184423529699783,0.052983399800209923861174807702020491503299976) coordinate (MRball--9-1) 
  -- (-0.480799666338400502874569182094863552689314791,0.048079966633840050287456918209486355268931479) coordinate (MRball--10-1) 
  -- (-0.484079705085712507485269125232608747676772192,0.044007245916882955225933556839328067970615654) coordinate (MRball--11-1) 
  -- (-0.486847452597628157209523606592466317367446831,0.040570621049802346434126967216038859780620569) coordinate (MRball--12-1) 
  -- (-0.489214233247563747024311436815993523580466231,0.037631864095966442078793187447384117198497402) coordinate (MRball--13-1) 
  -- (-0.491261294635998458279092634377480566795749883,0.035090092473999889877078045312677183342553563) coordinate (MRball--14-1) 
  -- (-0.519521730308756884400330651529448941935548067,0.000000000000000000000000000000000000000000000) coordinate (MRball--1-0) 
  -- (-0.526846097343530189103104624263377640778963633,-0.037631864095966442078793187447384117198497402) coordinate (MRball--14--1) 
  -- (-0.527418073647430503643650573808505177148067401,-0.040570621049802346434126967216038859780620569) coordinate (MRball--13--1) 
  -- (-0.528086951002595462711202682071936815647387846,-0.044007245916882955225933556839328067970615654) coordinate (MRball--12--1) 
  -- (-0.528879632972240553162026100304349907958246270,-0.048079966633840050287456918209486355268931479) coordinate (MRball--11--1) 
  -- (-0.529833998002099238611748077020204915032999759,-0.052983399800209923861174807702020491503299976) coordinate (MRball--10--1) 
  -- (-0.531005130353103119834749646957188693521722138,-0.059000570039233679981638849661909854835746904) coordinate (MRball--9--1) 
  -- (-0.532476334169215342951909054565463300286074081,-0.066559541771151917868988631820682912535759260) coordinate (MRball--8--1) 
  -- (-0.534379806106011442682265518122456191584111434,-0.076339972300858777526037931160350884512015919) coordinate (MRball--7--1) 
  -- (-0.535557591296605045618463937717183099268718551,-0.082393475584093083941302144264182015272110546) coordinate (MRball--13--2) 
  -- (-0.536938252694525464289533392043224242467016921,-0.089489708782420910714922232007204040411169487) coordinate (MRball--6--1) 
  -- (-0.538576189996503483005045268935364267783495359,-0.097922943635727906000917321624611685051544611) coordinate (MRball--11--2) 
  -- (-0.540554953483433104885693085991909268071174038,-0.108110990696686620977138617198381853614234808) coordinate (MRball--5--1) 
  -- (-0.542103781718258844061928158134859442960445055,-0.116165096082484038013270319600327023491523940) coordinate (MRball--14--3) 
  -- (-0.542968083088059569885806403773548782922430901,-0.120659574019568793307956978616344173982762422) coordinate (MRball--9--2) 
  -- (-0.543901945103062936287348547185389844270128195,-0.125515833485322216066311203196628425600798814) coordinate (MRball--13--3) 
  -- (-0.546014925347583108925018390056453813515839022,-0.136503731336895777231254597514113453378959755) coordinate (MRball--4--1) 
  -- (-0.548389324493065640142701258966104301767622331,-0.149560724861745174584373070627119355027533363) coordinate (MRball--11--3) 
  -- (-0.549755418133313758192063726361281838059235646,-0.157072976609518216626303921817509096588353042) coordinate (MRball--7--2) 
  -- (-0.551265286106721020743332982099411050784101434,-0.165379585832016306222999894629823315235230430) coordinate (MRball--10--3) 
  -- (-0.552081729725039789694966490344352569109809033,-0.169871301453858396829220458567493098187633549) coordinate (MRball--13--4) 
  -- (-0.554820761022722509499003315631330519773714161,-0.184940253674240836499667771877110173257904720) coordinate (MRball--3--1) 
  -- (-0.556593599297261798099717039871956391354093908,-0.198783428320450642178470371382841568340747824) coordinate (MRball--14--5) 
  -- (-0.557079068655728892798269367283279542830616151,-0.202574206783901415563007042648465288302042237) coordinate (MRball--11--4) 
  -- (-0.557930680095718723515006804480638716730810108,-0.209224005035894521318127551680239518774053791) coordinate (MRball--8--3) 
  -- (-0.558653196889898486901395168126903837358327650,-0.214866614188422494962075064664193783599356788) coordinate (MRball--13--5) 
  -- (-0.559813124126872786755840902881935562491859993,-0.223925249650749114702336361152774224996743997) coordinate (MRball--5--2) 
  -- (-0.561047426167051703844817489790681434139747313,-0.233769760902938209935340620746117264224894714) coordinate (MRball--12--5) 
  -- (-0.561932407935921661617489361889114596667503816,-0.240828174829680712121781155095334827143215921) coordinate (MRball--7--3) 
  -- (-0.563115639172105336662626541080197361549888915,-0.250273617409824594072278462702309938466617296) coordinate (MRball--9--4) 
  -- (-0.563871175924097787224163329067816687074810262,-0.256305079965498994192801513212643948670368301) coordinate (MRball--11--5) 
  -- (-0.564395427932577660605210232978648363205293637,-0.260490197507343535663943184451683859940904755) coordinate (MRball--13--6) 
  -- (-0.567296328553255492028622702576643118931495396,-0.283648164276627746014311351288321559465747698) coordinate (MRball--2--1) 
  -- (-0.564395427932577660605210232978648363205293637,-0.303905230425234124941267048526964503264388881) coordinate (MRball--13--7) 
  -- (-0.563871175924097787224163329067816687074810262,-0.307566095958598793031361815855172738404441961) coordinate (MRball--11--6) 
  -- (-0.563115639172105336662626541080197361549888915,-0.312842021762280742590348078377887423083271620) coordinate (MRball--9--5) 
  -- (-0.561932407935921661617489361889114596667503816,-0.321104233106240949495708206793779769524287895) coordinate (MRball--7--4) 
  -- (-0.561047426167051703844817489790681434139747313,-0.327277665264113493909476869044564169914852599) coordinate (MRball--12--7) 
  -- (-0.559813124126872786755840902881935562491859993,-0.335887874476123672053504541729161337495115996) coordinate (MRball--5--3) 
  -- (-0.558653196889898486901395168126903837358327650,-0.343786582701475991939320103462710053758970861) coordinate (MRball--13--8) 
  -- (-0.557930680095718723515006804480638716730810108,-0.348706675059824202196879252800399197956756318) coordinate (MRball--8--5) 
  -- (-0.557079068655728892798269367283279542830616151,-0.354504861871827477235262324634814254528573914) coordinate (MRball--11--7) 
  -- (-0.556593599297261798099717039871956391354093908,-0.357810170976811155921246668489114823013346083) coordinate (MRball--14--9) 
  -- (-0.554820761022722509499003315631330519773714161,-0.369880507348481672999335543754220346515809441) coordinate (MRball--3--2) 
  -- (-0.552081729725039789694966490344352569109809033,-0.382210428271181392865746031776859470922175484) coordinate (MRball--13--9) 
  -- (-0.551265286106721020743332982099411050784101434,-0.385885700274704714520333087469587735548871004) coordinate (MRball--10--7) 
  -- (-0.549755418133313758192063726361281838059235646,-0.392682441523795541565759804543772741470882604) coordinate (MRball--7--5) 
  -- (-0.548389324493065640142701258966104301767622331,-0.398828599631320465558328188338984946740088968) coordinate (MRball--11--8) 
  -- (-0.546014925347583108925018390056453813515839022,-0.409511194010687331693763792542340360136879266) coordinate (MRball--4--3) 
  -- (-0.543901945103062936287348547185389844270128195,-0.418386111617740720221037343988761418669329380) coordinate (MRball--13--10) 
  -- (-0.542968083088059569885806403773548782922430901,-0.422308509068490776577849425157204608939668478) coordinate (MRball--9--7) 
  -- (-0.542103781718258844061928158134859442960445055,-0.425938685635774806048657838534532419468921115) coordinate (MRball--14--11) 
  -- (-0.540554953483433104885693085991909268071174038,-0.432443962786746483908554468793527414456939230) coordinate (MRball--5--4) 
  -- (-0.538576189996503483005045268935364267783495359,-0.440653246360775577004127947310752582731950748) coordinate (MRball--11--9) 
  -- (-0.536938252694525464289533392043224242467016921,-0.447448543912104553574611160036020202055847434) coordinate (MRball--6--5) 
  -- (-0.535557591296605045618463937717183099268718551,-0.453164115712511961677161793453001083996608005) coordinate (MRball--13--11) 
  -- (-0.534379806106011442682265518122456191584111434,-0.458039833805152665156227586962105307072095515) coordinate (MRball--7--6) 
  -- (-0.532476334169215342951909054565463300286074081,-0.465916792398063425082920422744780387750314821) coordinate (MRball--8--7) 
  -- (-0.531005130353103119834749646957188693521722138,-0.472004560313869439853110797295278838685975234) coordinate (MRball--9--8) 
  -- (-0.529833998002099238611748077020204915032999759,-0.476850598201889314750573269318184423529699783) coordinate (MRball--10--9) 
  -- (-0.528879632972240553162026100304349907958246270,-0.480799666338400502874569182094863552689314791) coordinate (MRball--11--10) 
  -- (-0.528086951002595462711202682071936815647387846,-0.484079705085712507485269125232608747676772192) coordinate (MRball--12--11) 
  -- (-0.527418073647430503643650573808505177148067401,-0.486847452597628157209523606592466317367446831) coordinate (MRball--13--12) 
  -- (-0.526846097343530189103104624263377640778963633,-0.489214233247563747024311436815993523580466231) coordinate (MRball--14--13) 
  -- (-0.519521730308756884400330651529448941935548067,-0.519521730308756884400330651529448941935548067) coordinate (MRball--1--1) 
  -- (-0.489214233247563747024311436815993523580466231,-0.526846097343530189103104624263377640778963633) coordinate (MRball--13--14) 
  -- (-0.486847452597628157209523606592466317367446831,-0.527418073647430503643650573808505177148067401) coordinate (MRball--12--13) 
  -- (-0.484079705085712507485269125232608747676772192,-0.528086951002595462711202682071936815647387846) coordinate (MRball--11--12) 
  -- (-0.480799666338400502874569182094863552689314791,-0.528879632972240553162026100304349907958246270) coordinate (MRball--10--11) 
  -- (-0.476850598201889314750573269318184423529699783,-0.529833998002099238611748077020204915032999759) coordinate (MRball--9--10) 
  -- (-0.472004560313869439853110797295278838685975234,-0.531005130353103119834749646957188693521722138) coordinate (MRball--8--9) 
  -- (-0.465916792398063425082920422744780387750314821,-0.532476334169215342951909054565463300286074081) coordinate (MRball--7--8) 
  -- (-0.458039833805152665156227586962105307072095515,-0.534379806106011442682265518122456191584111434) coordinate (MRball--6--7) 
  -- (-0.453164115712511961677161793453001083996608005,-0.535557591296605045618463937717183099268718551) coordinate (MRball--11--13) 
  -- (-0.447448543912104553574611160036020202055847434,-0.536938252694525464289533392043224242467016921) coordinate (MRball--5--6) 
  -- (-0.440653246360775577004127947310752582731950748,-0.538576189996503483005045268935364267783495359) coordinate (MRball--9--11) 
  -- (-0.432443962786746483908554468793527414456939230,-0.540554953483433104885693085991909268071174038) coordinate (MRball--4--5) 
  -- (-0.425938685635774806048657838534532419468921115,-0.542103781718258844061928158134859442960445055) coordinate (MRball--11--14) 
  -- (-0.422308509068490776577849425157204608939668478,-0.542968083088059569885806403773548782922430901) coordinate (MRball--7--9) 
  -- (-0.418386111617740720221037343988761418669329380,-0.543901945103062936287348547185389844270128195) coordinate (MRball--10--13) 
  -- (-0.409511194010687331693763792542340360136879266,-0.546014925347583108925018390056453813515839022) coordinate (MRball--3--4) 
  -- (-0.398828599631320465558328188338984946740088968,-0.548389324493065640142701258966104301767622331) coordinate (MRball--8--11) 
  -- (-0.392682441523795541565759804543772741470882604,-0.549755418133313758192063726361281838059235646) coordinate (MRball--5--7) 
  -- (-0.385885700274704714520333087469587735548871004,-0.551265286106721020743332982099411050784101434) coordinate (MRball--7--10) 
  -- (-0.382210428271181392865746031776859470922175484,-0.552081729725039789694966490344352569109809033) coordinate (MRball--9--13) 
  -- (-0.369880507348481672999335543754220346515809441,-0.554820761022722509499003315631330519773714161) coordinate (MRball--2--3) 
  -- (-0.357810170976811155921246668489114823013346083,-0.556593599297261798099717039871956391354093908) coordinate (MRball--9--14) 
  -- (-0.354504861871827477235262324634814254528573914,-0.557079068655728892798269367283279542830616151) coordinate (MRball--7--11) 
  -- (-0.348706675059824202196879252800399197956756318,-0.557930680095718723515006804480638716730810108) coordinate (MRball--5--8) 
  -- (-0.343786582701475991939320103462710053758970861,-0.558653196889898486901395168126903837358327650) coordinate (MRball--8--13) 
  -- (-0.335887874476123672053504541729161337495115996,-0.559813124126872786755840902881935562491859993) coordinate (MRball--3--5) 
  -- (-0.327277665264113493909476869044564169914852599,-0.561047426167051703844817489790681434139747313) coordinate (MRball--7--12) 
  -- (-0.321104233106240949495708206793779769524287895,-0.561932407935921661617489361889114596667503816) coordinate (MRball--4--7) 
  -- (-0.312842021762280742590348078377887423083271620,-0.563115639172105336662626541080197361549888915) coordinate (MRball--5--9) 
  -- (-0.307566095958598793031361815855172738404441961,-0.563871175924097787224163329067816687074810262) coordinate (MRball--6--11) 
  -- (-0.303905230425234124941267048526964503264388881,-0.564395427932577660605210232978648363205293637) coordinate (MRball--7--13) 
  -- (-0.283648164276627746014311351288321559465747698,-0.567296328553255492028622702576643118931495396) coordinate (MRball--1--2) 
  -- (-0.260490197507343535663943184451683859940904755,-0.564395427932577660605210232978648363205293637) coordinate (MRball--6--13) 
  -- (-0.256305079965498994192801513212643948670368301,-0.563871175924097787224163329067816687074810262) coordinate (MRball--5--11) 
  -- (-0.250273617409824594072278462702309938466617296,-0.563115639172105336662626541080197361549888915) coordinate (MRball--4--9) 
  -- (-0.240828174829680712121781155095334827143215921,-0.561932407935921661617489361889114596667503816) coordinate (MRball--3--7) 
  -- (-0.233769760902938209935340620746117264224894714,-0.561047426167051703844817489790681434139747313) coordinate (MRball--5--12) 
  -- (-0.223925249650749114702336361152774224996743997,-0.559813124126872786755840902881935562491859993) coordinate (MRball--2--5) 
  -- (-0.214866614188422494962075064664193783599356788,-0.558653196889898486901395168126903837358327650) coordinate (MRball--5--13) 
  -- (-0.209224005035894521318127551680239518774053791,-0.557930680095718723515006804480638716730810108) coordinate (MRball--3--8) 
  -- (-0.202574206783901415563007042648465288302042237,-0.557079068655728892798269367283279542830616151) coordinate (MRball--4--11) 
  -- (-0.198783428320450642178470371382841568340747824,-0.556593599297261798099717039871956391354093908) coordinate (MRball--5--14) 
  -- (-0.184940253674240836499667771877110173257904720,-0.554820761022722509499003315631330519773714161) coordinate (MRball--1--3) 
  -- (-0.169871301453858396829220458567493098187633549,-0.552081729725039789694966490344352569109809033) coordinate (MRball--4--13) 
  -- (-0.165379585832016306222999894629823315235230430,-0.551265286106721020743332982099411050784101434) coordinate (MRball--3--10) 
  -- (-0.157072976609518216626303921817509096588353042,-0.549755418133313758192063726361281838059235646) coordinate (MRball--2--7) 
  -- (-0.149560724861745174584373070627119355027533363,-0.548389324493065640142701258966104301767622331) coordinate (MRball--3--11) 
  -- (-0.136503731336895777231254597514113453378959755,-0.546014925347583108925018390056453813515839022) coordinate (MRball--1--4) 
  -- (-0.125515833485322216066311203196628425600798814,-0.543901945103062936287348547185389844270128195) coordinate (MRball--3--13) 
  -- (-0.120659574019568793307956978616344173982762422,-0.542968083088059569885806403773548782922430901) coordinate (MRball--2--9) 
  -- (-0.116165096082484038013270319600327023491523940,-0.542103781718258844061928158134859442960445055) coordinate (MRball--3--14) 
  -- (-0.108110990696686620977138617198381853614234808,-0.540554953483433104885693085991909268071174038) coordinate (MRball--1--5) 
  -- (-0.097922943635727906000917321624611685051544611,-0.538576189996503483005045268935364267783495359) coordinate (MRball--2--11) 
  -- (-0.089489708782420910714922232007204040411169487,-0.536938252694525464289533392043224242467016921) coordinate (MRball--1--6) 
  -- (-0.082393475584093083941302144264182015272110546,-0.535557591296605045618463937717183099268718551) coordinate (MRball--2--13) 
  -- (-0.076339972300858777526037931160350884512015919,-0.534379806106011442682265518122456191584111434) coordinate (MRball--1--7) 
  -- (-0.066559541771151917868988631820682912535759260,-0.532476334169215342951909054565463300286074081) coordinate (MRball--1--8) 
  -- (-0.059000570039233679981638849661909854835746904,-0.531005130353103119834749646957188693521722138) coordinate (MRball--1--9) 
  -- (-0.052983399800209923861174807702020491503299976,-0.529833998002099238611748077020204915032999759) coordinate (MRball--1--10) 
  -- (-0.048079966633840050287456918209486355268931479,-0.528879632972240553162026100304349907958246270) coordinate (MRball--1--11) 
  -- (-0.044007245916882955225933556839328067970615654,-0.528086951002595462711202682071936815647387846) coordinate (MRball--1--12) 
  -- (-0.040570621049802346434126967216038859780620569,-0.527418073647430503643650573808505177148067401) coordinate (MRball--1--13) 
  -- (-0.037631864095966442078793187447384117198497402,-0.526846097343530189103104624263377640778963633) coordinate (MRball--1--14) 
  -- (0.000000000000000000000000000000000000000000000,-0.519521730308756884400330651529448941935548067) coordinate (MRball-0--1) 
  -- (0.035090092473999889877078045312677183342553563,-0.491261294635998458279092634377480566795749883) coordinate (MRball-1--14) 
  -- (0.037631864095966442078793187447384117198497402,-0.489214233247563747024311436815993523580466231) coordinate (MRball-1--13) 
  -- (0.040570621049802346434126967216038859780620569,-0.486847452597628157209523606592466317367446831) coordinate (MRball-1--12) 
  -- (0.044007245916882955225933556839328067970615654,-0.484079705085712507485269125232608747676772192) coordinate (MRball-1--11) 
  -- (0.048079966633840050287456918209486355268931479,-0.480799666338400502874569182094863552689314791) coordinate (MRball-1--10) 
  -- (0.052983399800209923861174807702020491503299976,-0.476850598201889314750573269318184423529699783) coordinate (MRball-1--9) 
  -- (0.059000570039233679981638849661909854835746904,-0.472004560313869439853110797295278838685975234) coordinate (MRball-1--8) 
  -- (0.066559541771151917868988631820682912535759260,-0.465916792398063425082920422744780387750314821) coordinate (MRball-1--7) 
  -- (0.071115057432706890291174417736658770574503586,-0.462247873312594786892633715288282008734273309) coordinate (MRball-2--13) 
  -- (0.076339972300858777526037931160350884512015919,-0.458039833805152665156227586962105307072095515) coordinate (MRball-1--6) 
  -- (0.082393475584093083941302144264182015272110546,-0.453164115712511961677161793453001083996608005) coordinate (MRball-2--11) 
  -- (0.089489708782420910714922232007204040411169487,-0.447448543912104553574611160036020202055847434) coordinate (MRball-1--5) 
  -- (0.094940639051445393996328810644797268588074729,-0.443056315573411838649534449675720586744348736) coordinate (MRball-3--14) 
  -- (0.097922943635727906000917321624611685051544611,-0.440653246360775577004127947310752582731950748) coordinate (MRball-2--9) 
  -- (0.101098686672216864061642475222283052861061363,-0.438094308912939744267117392629893229064599238) coordinate (MRball-3--13) 
  -- (0.108110990696686620977138617198381853614234808,-0.432443962786746483908554468793527414456939230) coordinate (MRball-1--4) 
  -- (0.116165096082484038013270319600327023491523940,-0.425938685635774806048657838534532419468921115) coordinate (MRball-3--11) 
  -- (0.120659574019568793307956978616344173982762422,-0.422308509068490776577849425157204608939668478) coordinate (MRball-2--7) 
  -- (0.125515833485322216066311203196628425600798814,-0.418386111617740720221037343988761418669329380) coordinate (MRball-3--10) 
  -- (0.128093563368515089035041645277317785938442499,-0.416304080947674039363885347151282804299938121) coordinate (MRball-4--13) 
  -- (0.136503731336895777231254597514113453378959755,-0.409511194010687331693763792542340360136879266) coordinate (MRball-1--3) 
  -- (0.144049227661203966205414378937943452703860092,-0.403337837451371105375160261026241667570808257) coordinate (MRball-5--14) 
  -- (0.146067769214823061761744470380884913627140448,-0.401686365340763419844797293547433512474636233) coordinate (MRball-4--11) 
  -- (0.149560724861745174584373070627119355027533363,-0.398828599631320465558328188338984946740088968) coordinate (MRball-3--8) 
  -- (0.152477715445555076182924212261297743413857326,-0.396442060158443198075602951879374132876029047) coordinate (MRball-5--13) 
  -- (0.157072976609518216626303921817509096588353042,-0.392682441523795541565759804543772741470882604) coordinate (MRball-2--5) 
  -- (0.161953697792610330222126410877006635878796915,-0.388688874702264792533103386104815926109112596) coordinate (MRball-5--12) 
  -- (0.165379585832016306222999894629823315235230430,-0.385885700274704714520333087469587735548871004) coordinate (MRball-3--7) 
  -- (0.169871301453858396829220458567493098187633549,-0.382210428271181392865746031776859470922175484) coordinate (MRball-4--9) 
  -- (0.172685386309666835350700810148574056079113147,-0.379907849881267037771541782326862923374048923) coordinate (MRball-5--11) 
  -- (0.174613819187903961747297383354181770643524734,-0.378329941573791917119144330600727169727636924) coordinate (MRball-6--13) 
  -- (0.184940253674240836499667771877110173257904720,-0.369880507348481672999335543754220346515809441) coordinate (MRball-1--2) 
  -- (0.194621195770247421632909960783796973584797273,-0.361439363573316640175404212884194379514623506) coordinate (MRball-7--13) 
  -- (0.196334090360518938488608468761712871398339567,-0.359945832327618053895782192729806930896955872) coordinate (MRball-6--11) 
  -- (0.198783428320450642178470371382841568340747824,-0.357810170976811155921246668489114823013346083) coordinate (MRball-5--9) 
  -- (0.202574206783901415563007042648465288302042237,-0.354504861871827477235262324634814254528573914) coordinate (MRball-4--7) 
  -- (0.205371645987043651216263463943232009089706642,-0.352065678834931973513594509616969158439497100) coordinate (MRball-7--12) 
  -- (0.209224005035894521318127551680239518774053791,-0.348706675059824202196879252800399197956756318) coordinate (MRball-3--5) 
  -- (0.212715326419959361740273861663386252969010023,-0.345662405432433962827945025203002661074641288) coordinate (MRball-8--13) 
  -- (0.214866614188422494962075064664193783599356788,-0.343786582701475991939320103462710053758970861) coordinate (MRball-5--8) 
  -- (0.217379134181769143451868376293037015191105452,-0.341595782285637225424364591317629595300308567) coordinate (MRball-7--11) 
  -- (0.218800534668312700596715653452027268870284337,-0.340356387261819756483779905369820196020442302) coordinate (MRball-9--14) 
  -- (0.223925249650749114702336361152774224996743997,-0.335887874476123672053504541729161337495115996) coordinate (MRball-2--3) 
  -- (0.229289606886223787241152398177207517570076433,-0.331196098835656581570553464033744192045665959) coordinate (MRball-9--13) 
  -- (0.230869812976842507349876598587420074671929908,-0.329814018538346439071252283696314392388471297) coordinate (MRball-7--10) 
  -- (0.233769760902938209935340620746117264224894714,-0.327277665264113493909476869044564169914852599) coordinate (MRball-5--7) 
  -- (0.236367641088999059398296436991696777155032625,-0.325005506497373706672657600863583068588169860) coordinate (MRball-8--11) 
  -- (0.240828174829680712121781155095334827143215921,-0.321104233106240949495708206793779769524287895) coordinate (MRball-3--4) 
  -- (0.244519486431325909209436967856386435997751507,-0.317875332360723681972268058213302366797076959) coordinate (MRball-10--13) 
  -- (0.246136346284567169714240282372281374202016367,-0.316461016651586361061166077335790338259735329) coordinate (MRball-7--9) 
  -- (0.247624886274565107053343486191879631555371556,-0.315158946167628318067891709698755894706836526) coordinate (MRball-11--14) 
  -- (0.250273617409824594072278462702309938466617296,-0.312842021762280742590348078377887423083271620) coordinate (MRball-4--5) 
  -- (0.253588920139773620940215038598087916825170695,-0.309942013504167758926929491619885231675208628) coordinate (MRball-9--11) 
  -- (0.256305079965498994192801513212643948670368301,-0.307566095958598793031361815855172738404441961) coordinate (MRball-5--6) 
  -- (0.258571053035912916324463319985415868353072568,-0.305583971769715264747093014528218753508176671) coordinate (MRball-11--13) 
  -- (0.260490197507343535663943184451683859940904755,-0.303905230425234124941267048526964503264388881) coordinate (MRball-6--7) 
  -- (0.263564233040238078544563906490042764995865646,-0.301216266331700661193787321702906017138132167) coordinate (MRball-7--8) 
  -- (0.265917797878547109099414312796625673834339591,-0.299157522613365497736841101896203883063632039) coordinate (MRball-8--9) 
  -- (0.267777611771587492121391143495745269944515863,-0.297530679746208324579323492773050299938350959) coordinate (MRball-9--10) 
  -- (0.269284298979181813201161635542805370897977719,-0.296212728877099994521277799097085907987775491) coordinate (MRball-10--11) 
  -- (0.270529712661283145792286634845741114441632417,-0.295123322903217977227949056195353943027235364) coordinate (MRball-11--12) 
  -- (0.271576391316405008390858989376477173976763223,-0.294207757259438759090097238491183605141493491) coordinate (MRball-12--13) 
  -- (0.272468389261054077772558961754701097133845036,-0.293427496127289006831986574197370412297986962) coordinate (MRball-13--14) 
  -- (0.283648164276627746014311351288321559465747698,-0.283648164276627746014311351288321559465747698) coordinate (MRball-1--1) 
  -- (0.293427496127289006831986574197370412297986962,-0.272468389261054077772558961754701097133845036) coordinate (MRball-14--13) 
  -- (0.294207757259438759090097238491183605141493491,-0.271576391316405008390858989376477173976763223) coordinate (MRball-13--12) 
  -- (0.295123322903217977227949056195353943027235364,-0.270529712661283145792286634845741114441632417) coordinate (MRball-12--11) 
  -- (0.296212728877099994521277799097085907987775491,-0.269284298979181813201161635542805370897977719) coordinate (MRball-11--10) 
  -- (0.297530679746208324579323492773050299938350959,-0.267777611771587492121391143495745269944515863) coordinate (MRball-10--9) 
  -- (0.299157522613365497736841101896203883063632039,-0.265917797878547109099414312796625673834339591) coordinate (MRball-9--8) 
  -- (0.301216266331700661193787321702906017138132167,-0.263564233040238078544563906490042764995865646) coordinate (MRball-8--7) 
  -- (0.303905230425234124941267048526964503264388881,-0.260490197507343535663943184451683859940904755) coordinate (MRball-7--6) 
  -- (0.305583971769715264747093014528218753508176671,-0.258571053035912916324463319985415868353072568) coordinate (MRball-13--11) 
  -- (0.307566095958598793031361815855172738404441961,-0.256305079965498994192801513212643948670368301) coordinate (MRball-6--5) 
  -- (0.309942013504167758926929491619885231675208628,-0.253588920139773620940215038598087916825170695) coordinate (MRball-11--9) 
  -- (0.312842021762280742590348078377887423083271620,-0.250273617409824594072278462702309938466617296) coordinate (MRball-5--4) 
  -- (0.315158946167628318067891709698755894706836526,-0.247624886274565107053343486191879631555371556) coordinate (MRball-14--11) 
  -- (0.316461016651586361061166077335790338259735329,-0.246136346284567169714240282372281374202016367) coordinate (MRball-9--7) 
  -- (0.317875332360723681972268058213302366797076959,-0.244519486431325909209436967856386435997751507) coordinate (MRball-13--10) 
  -- (0.321104233106240949495708206793779769524287895,-0.240828174829680712121781155095334827143215921) coordinate (MRball-4--3) 
  -- (0.325005506497373706672657600863583068588169860,-0.236367641088999059398296436991696777155032625) coordinate (MRball-11--8) 
  -- (0.327277665264113493909476869044564169914852599,-0.233769760902938209935340620746117264224894714) coordinate (MRball-7--5) 
  -- (0.329814018538346439071252283696314392388471297,-0.230869812976842507349876598587420074671929908) coordinate (MRball-10--7) 
  -- (0.331196098835656581570553464033744192045665959,-0.229289606886223787241152398177207517570076433) coordinate (MRball-13--9) 
  -- (0.335887874476123672053504541729161337495115996,-0.223925249650749114702336361152774224996743997) coordinate (MRball-3--2) 
  -- (0.340356387261819756483779905369820196020442302,-0.218800534668312700596715653452027268870284337) coordinate (MRball-14--9) 
  -- (0.341595782285637225424364591317629595300308567,-0.217379134181769143451868376293037015191105452) coordinate (MRball-11--7) 
  -- (0.343786582701475991939320103462710053758970861,-0.214866614188422494962075064664193783599356788) coordinate (MRball-8--5) 
  -- (0.345662405432433962827945025203002661074641288,-0.212715326419959361740273861663386252969010023) coordinate (MRball-13--8) 
  -- (0.348706675059824202196879252800399197956756318,-0.209224005035894521318127551680239518774053791) coordinate (MRball-5--3) 
  -- (0.352065678834931973513594509616969158439497100,-0.205371645987043651216263463943232009089706642) coordinate (MRball-12--7) 
  -- (0.354504861871827477235262324634814254528573914,-0.202574206783901415563007042648465288302042237) coordinate (MRball-7--4) 
  -- (0.357810170976811155921246668489114823013346083,-0.198783428320450642178470371382841568340747824) coordinate (MRball-9--5) 
  -- (0.359945832327618053895782192729806930896955872,-0.196334090360518938488608468761712871398339567) coordinate (MRball-11--6) 
  -- (0.361439363573316640175404212884194379514623506,-0.194621195770247421632909960783796973584797273) coordinate (MRball-13--7) 
  -- (0.369880507348481672999335543754220346515809441,-0.184940253674240836499667771877110173257904720) coordinate (MRball-2--1) 
  -- (0.378329941573791917119144330600727169727636924,-0.174613819187903961747297383354181770643524734) coordinate (MRball-13--6) 
  -- (0.379907849881267037771541782326862923374048923,-0.172685386309666835350700810148574056079113147) coordinate (MRball-11--5) 
  -- (0.382210428271181392865746031776859470922175484,-0.169871301453858396829220458567493098187633549) coordinate (MRball-9--4) 
  -- (0.385885700274704714520333087469587735548871004,-0.165379585832016306222999894629823315235230430) coordinate (MRball-7--3) 
  -- (0.388688874702264792533103386104815926109112596,-0.161953697792610330222126410877006635878796915) coordinate (MRball-12--5) 
  -- (0.392682441523795541565759804543772741470882604,-0.157072976609518216626303921817509096588353042) coordinate (MRball-5--2) 
  -- (0.396442060158443198075602951879374132876029047,-0.152477715445555076182924212261297743413857326) coordinate (MRball-13--5) 
  -- (0.398828599631320465558328188338984946740088968,-0.149560724861745174584373070627119355027533363) coordinate (MRball-8--3) 
  -- (0.401686365340763419844797293547433512474636233,-0.146067769214823061761744470380884913627140448) coordinate (MRball-11--4) 
  -- (0.403337837451371105375160261026241667570808257,-0.144049227661203966205414378937943452703860092) coordinate (MRball-14--5) 
  -- (0.409511194010687331693763792542340360136879266,-0.136503731336895777231254597514113453378959755) coordinate (MRball-3--1) 
  -- (0.416304080947674039363885347151282804299938121,-0.128093563368515089035041645277317785938442499) coordinate (MRball-13--4) 
  -- (0.418386111617740720221037343988761418669329380,-0.125515833485322216066311203196628425600798814) coordinate (MRball-10--3) 
  -- (0.422308509068490776577849425157204608939668478,-0.120659574019568793307956978616344173982762422) coordinate (MRball-7--2) 
  -- (0.425938685635774806048657838534532419468921115,-0.116165096082484038013270319600327023491523940) coordinate (MRball-11--3) 
  -- (0.432443962786746483908554468793527414456939230,-0.108110990696686620977138617198381853614234808) coordinate (MRball-4--1) 
  -- (0.438094308912939744267117392629893229064599238,-0.101098686672216864061642475222283052861061363) coordinate (MRball-13--3) 
  -- (0.440653246360775577004127947310752582731950748,-0.097922943635727906000917321624611685051544611) coordinate (MRball-9--2) 
  -- (0.443056315573411838649534449675720586744348736,-0.094940639051445393996328810644797268588074729) coordinate (MRball-14--3) 
  -- (0.447448543912104553574611160036020202055847434,-0.089489708782420910714922232007204040411169487) coordinate (MRball-5--1) 
  -- (0.453164115712511961677161793453001083996608005,-0.082393475584093083941302144264182015272110546) coordinate (MRball-11--2) 
  -- (0.458039833805152665156227586962105307072095515,-0.076339972300858777526037931160350884512015919) coordinate (MRball-6--1) 
  -- (0.462247873312594786892633715288282008734273309,-0.071115057432706890291174417736658770574503586) coordinate (MRball-13--2) 
  -- (0.465916792398063425082920422744780387750314821,-0.066559541771151917868988631820682912535759260) coordinate (MRball-7--1) 
  -- (0.472004560313869439853110797295278838685975234,-0.059000570039233679981638849661909854835746904) coordinate (MRball-8--1) 
  -- (0.476850598201889314750573269318184423529699783,-0.052983399800209923861174807702020491503299976) coordinate (MRball-9--1) 
  -- (0.480799666338400502874569182094863552689314791,-0.048079966633840050287456918209486355268931479) coordinate (MRball-10--1) 
  -- (0.484079705085712507485269125232608747676772192,-0.044007245916882955225933556839328067970615654) coordinate (MRball-11--1) 
  -- (0.486847452597628157209523606592466317367446831,-0.040570621049802346434126967216038859780620569) coordinate (MRball-12--1) 
  -- (0.489214233247563747024311436815993523580466231,-0.037631864095966442078793187447384117198497402) coordinate (MRball-13--1) 
  -- (0.491261294635998458279092634377480566795749883,-0.035090092473999889877078045312677183342553563) coordinate (MRball-14--1) 
  -- cycle%
}

\title{Counting Markov triples and simple geodesics}
\author{Morten S. Risager}
\address{Department of Mathematical Sciences, University of Copenhagen, Universitetsparken 5, 2100 Copenhagen, Denmark}
\email{risager@math.ku.dk}
\urladdr{https://www.math.ku.dk/~risager/}
\thanks{During the preparation of this work the author was supported by Grant {DFF-3103-0074B}
 from the Independent Research Fund Denmark. }
\date{\today}
\subjclass[2020]{Primary 11D45, 11P21; Secondary 11D25, 11H06, 53C22}
\keywords{Markov triples, simple closed geodesics, modular torus}
\begin{document}
\begin{abstract}We give an asymptotic formula for the number of Markov triples with largest entry below $x$.  
We show that the error term is $o(\log x)$, which improves upon the previous best known results due to McShane and Rivin. We give an explicit error based on classical zero-free regions of the Riemann zeta function, and a sharper estimate under the quasi-Riemann hypothesis. 
  The proof uses the correspondence between Markov triples and simple closed geodesics on the modular punctured torus and a careful analysis of the McShane-Rivin unit ball.
\end{abstract}

\maketitle

\section{Introduction}
A Markov triple is an ordered triple of natural numbers  $0<p\leq q\leq r$ satisfying the equation \begin{equation}p^2+q^2+r^2=3pqr.\end{equation} These are fascinating objects in number theory with a rich history and many connections to various areas of mathematics including Diophantine approximation, hyperbolic geometry, and combinatorics. All Markov triples can be generated from the initial triple $(1,1,1)$ by successively applying Vieta transformations
\begin{equation}(p,q,r)\mapsto (p,r,3pr-q), \text{ and } (p,q,r)\mapsto (q,r,3qr-p). \end{equation} 
See \cite{Bombieri:2007, Aigner:2013, Silverman:2026} for much more about these triples. 

Denote the set of ordered Markov triples by $\Mcal$. We investigate the counting function of Markov triples, i.e. the function 
\begin{equation}M(x)=\#\{(p,q,r)\in \Mcal: r\leq x\}.\end{equation}
Zagier \cite{Zagier:1982} proved that \begin{equation}M(x)=C(\log(3x))^2+O(\log x(\log\log x)^2)\end{equation} for an explicit constant $C\approx 0.1807171047118064780577$.
 This error estimate was improved by McShane and Rivin \cite[Thm. 0.1]{McShaneRivin:1995} \cite{McShaneRivin:1995a} to $O(\log x\log\log x)$. In this paper we prove the first sub-$\log x$ bound on the error: 

\begin{thm} \label{thm:main} As $x\to\infty$, we have 
\begin{equation}M(x)=C(\log(3x))^2+O\left(\frac{\log x(\log\log\log x)^{1/10}}{(\log\log(x))^{3/10}}\right).\end{equation}
If the quasi-Riemann hypothesis holds, then
\begin{equation}M(x)=C(\log(3x))^2+O\left(\frac{\log x}{(\log\log(x))^{1/2}}\right).\end{equation}
\end{thm}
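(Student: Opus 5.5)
Via the McShane--Rivin correspondence, the statement becomes a lattice-point count for the dilated McShane--Rivin unit ball, and the error term comes from counting primitive points near its boundary.

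\textbf{Reduction.} Markov triples correspond to simple closed geodesics on the modular punctured torus $\ZZ\backslash\!\!\backslash$, i.e.\ $\Gamma'\backslash\mathbb{H}$ with $\Gamma'$ the commutator subgroup of $\mathrm{SL}_2(\ZZ)$. Such geodesics are parametrised by primitive integer vectors $(a,b)\in\ZZ^2$ up to sign, that is, by slopes. The entry $r$ of the triple attached to a geodesic $\gamma$ satisfies $3r=2\cosh(\ell(\gamma)/2)$. By McShane--Rivin, $\ell(\gamma_{(a,b)})/2 = \MRnorm{(a,b)}+O(\text{bounded})$, where $\MRnorm{\cdot}$ is a norm on $\RR^2$ (the stable length / McShane--Rivin norm). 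Its unit ball is the polygon-like convex set drawn above, with corners at rational directions accumulating everywhere.

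The condition $r\le x$ then reads $\ell/2\le \log(3x)+O(x^{-2})$. Up to finitely many symmetric overcounts, $M(x)$ is a fixed multiple of
\[
\#\{(a,b)\ \text{primitive}:\ \ell(a,b)\le 2\log(3x)\}.
\]

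\textbf{Main term and error structure.} Write $T=\log(3x)$.

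1. Replace $\ell(a,b)/2$ by $\MRnorm{(a,b)}$, but only with a quantitative error. I would use the exact relation $\ell/2=\MRnorm{\cdot}+\delta(a,b)$, where $\delta(a,b)\ge 0$ decays in terms of how far the direction of $(a,b)$ is from the corner directions of small height. Concretely, $\ell$ is a trace of a word in $\mathrm{SL}_2(\ZZ)$, and $\log(\text{trace})$ equals the norm plus $O(e^{-c\cdot\text{(continued fraction data)}})$.

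2. Count primitive points in $T\cdot B_X$, where $B_X$ is the unit ball, by Möbius inversion:
\[
\sum_{d}\mu(d)\,N(T/d), \qquad N(R)=\#(\ZZ^2\cap R B_X).
\]
This gives $\tfrac{6}{\pi^2}\vol{(B_X)}\,T^2$ plus an error. The constant is $C$ after normalisation.

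3. The lattice error for a convex body with no curvature control is $O(R)$ in general. That is exactly the $\log x$ barrier, so it must be beaten. The boundary of $B_X$ is piecewise linear near every rational direction, and counting lattice points on the flat pieces produces the problematic $O(T)$ contributions.

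\textbf{Key step (main obstacle).} I would prove that the discrepancy $E(T)=\#\{\text{primitive }v:\MRnorm{v}\le T\}-cT^2$ is $o(T)$, with a rate.

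First, decompose the boundary into its flat segments attached to Farey fractions of height $\le H$. The remaining arcs are short, of total angular size $O(1/H)$-ish by the corner structure.

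On each flat segment with direction $(p',q')$, primitive points of $TB_X$ near the segment lie on lines parallel to it. Counting them amounts to counting integers $n\le T/\text{(norm of the corner)}$ coprime to stuff in arithmetic progressions. The flat-edge boundary contribution becomes a sum like
\[
\sum_{n\le T}\mu(n)\,\psi(T\alpha/n),
\]
where $\psi$ is a sawtooth function. Cancellation therefore requires the prime number theorem with an error term, i.e.\ $M(y)=\sum_{n\le y}\mu(n)=o(y)$ with a rate.

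With the classical zero-free region, $M(y)\ll y\exp(-c(\log y)^{3/5}(\log\log y)^{-1/5})$. Optimising $H$ against the trivial $O(T/H)$ bound from the curved remainder arcs, and against the accumulated errors from $H^{O(1)}$ segments, gives losses like $(\log\log)^{-3/10}$ after taking $T=\log x$; the exponents $3/10,1/10$ match $3/5,1/5$ halved. Under quasi-RH, $M(y)\ll y^{1-\epsilon}$, and the optimisation gives $(\log T)^{-1/2}$.

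I expect the hard part to be controlling the uniformity in the height of the corners. Each flat piece must be handled with explicit dependence on the Farey height, and the non-flat remainder needs a bound better than trivial. I would first try a van der Corput / Vinogradov-free argument, using only that the boundary's linear pieces have rational slopes with known lengths from McShane--Rivin's explicit description.
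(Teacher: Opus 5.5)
\textbf{There is a genuine gap: your key step rests on a false picture of $B_X$, and it omits the ingredient that actually beats the Jarn\'{\i}k--Steinhaus bound.} The reduction to primitive lattice points in dilates of $B_X$ and the final M\"obius inversion are the right framework; the problem is the step in between.

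The ball $B_X$ is strictly convex. It has corners at the rational points $p/\MRnorm{p}$ and no flat segments at all. So $\partial B_X$ has no decomposition into flat edges of rational slope, and there are no edge sums $\sum_n\mu(n)\psi(T\alpha/n)$ to cancel. Flat edges with rational normals belong to the polar ball instead: the supporting functionals at the corner $p/\MRnorm{p}$ form a segment with normal $p$. The arcs $\Gamma_{H,j}$ between corners of height at most $H$ are also not short in total, since their lengths add up to the perimeter. What is small is their total turn, which is at most $Me^{-\kappa H}$ by Theorem~\ref{thm:DoanLiNguyen}, not $O(1/H)$.

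Your mechanism would fail even if the geometry were as you describe. A flat edge with rational normal produces an error of order $T$ that M\"obius cancellation cannot remove. For the square, the primitive count is $8\sum_{n\le R}\phi(n)$, and its error is $\Omega(R\sqrt{\log\log R})$. Relatedly, nothing in your key step uses the arithmetic of $X$. It would therefore give $o(L)$ for every once-punctured torus, contradicting McShane--Rivin's assertion that this fails for a dense set of tori.

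\textbf{The missing idea is a Diophantine separation between integral normal directions and the supporting normals along the nearly flat arcs.} After smoothing and Poisson summation, the error in the full count $N_X(R)$ is governed by sums of $\widehat{1_{B_X}}$ along radial lines through primitive $k$. Each such sum is bounded by $\frac{S}{\abs{k}}\operatorname{diam}\bigl(C(k/\abs{k},1/(S\abs{k}))\bigr)$. This cap has diameter $O(1/(S\abs{k}\eta))$ unless $k/\abs{k}$ lies within $\eta$ of the normal arc $J_{H,j}$ of some $\Gamma_{H,j}$.

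Lemma~\ref{lem:support_separation} controls that bad case quantitatively: $\abs{\det(\lambda_{u\vert v},d)}\ge e^{-M_1\abs{d}A^2}$. Its proof uses that $r=e^{\MRnorm{u}/2}$ is a unit and that $\norm_{K/\QQ}(A_\pm)=t_u^2/(t_u^2-4)\neq1$ in $K=\QQ(\sqrt{t_u^2-4})$. Combined with the turn estimates, this forces an unresolved $d$ onto an arc with $\abs{d}A^2\gg H$, hence of length $\ll H^{-3/2}\abs{d}^{1/2}$. It yields $N_X(R)=\operatorname{vol}(B_X)R^2+O(R/(\log R)^{3/2})$ with no input from $\zeta$.

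The zero-free region enters only in the M\"obius inversion to primitive points, not in the boundary analysis. Truncating at $g\le R/Y$ costs $R/\sqrt{\log Y}$, and the tail costs $RY\Delta(R/Y)$. Vinogradov--Korobov allows $\log Y\asymp(\log R)^{3/5}(\log\log R)^{-1/5}$, and quasi-RH allows $Y=R^a$. That is where the exponents $3/10$, $1/10$ and $1/2$ come from; your optimisation of $H$ against $T/H$ does not produce them.

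Finally, your step~1 is unnecessary and, as first stated, harmful. The McShane--Rivin norm equals the geodesic length exactly on primitive classes. An $O(1)$ discrepancy between length and norm would by itself leave an uncertainty of order $T$ in the count.
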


The first bound uses the classical Vinogradov--Korobov estimate \cite[Chap.~V, \S5, Satz~3]{Walfisz:1963}. In fact the prime number theorem alone is enough to give a bound of the form $o(\log x)$.
The quasi-Riemann hypothesis says that there exists a zero-free region for the Riemann zeta function of the form $\Re(s)>1-\delta$ for some fixed $\delta>0$. Such a result was recently announced by OpenAI \cite{OpenAI:2026}, \cite{OpenAI:2026a} claiming $\delta=1/12$, $\delta=1/8$ respectively.

The true size of the error is expected to be much smaller than what is predicted by Theorem \ref{thm:main}. In fact McShane and Rivin conjecture \cite[Conj. 4.3 ]{McShaneRivin:1995a} that the error estimation can be improved to \begin{equation}\label{eq:conjecture_bound}M(x)=C(\log(3x))^2+O_\varepsilon((\log x)^{\tfrac{1}{2}+\varepsilon})\end{equation}  
for every $\varepsilon>0$.

Various comments are in order:
\begin{enumerate}
\item The Frobenius uniqueness conjecture \cite{Frobenius:1913} predicts that the largest coordinate in an ordered Markov triple determines the triple. If true $M(x)$ counts Markov numbers of size at most $x$. The uniqueness conjecture remains open in general. Aigner \cite{Aigner:2013} gives a detailed account of various aspects and known partial results. See also Gaster \cite{Gaster:2022} and Doan, Li and Nguyen \cite{DoanLiNguyen:2026} for more recent progress on the Frobenius conjecture.

\item The above counting problem has a geometric interpretation in terms of simple closed geodesics on the hyperbolic modular punctured torus. McShane and Rivin
\cite{McShaneRivin:1995,McShaneRivin:1995a} showed that, on any complete
finite-area hyperbolic once-punctured torus, the lengths of essential simple closed
geodesics define a norm on real homology. Counting these geodesics amounts to counting primitive lattice points in dilates of
its unit ball with respect to this norm. For the modular torus, Markov
triples with largest entry at most $x$ correspond to isometry orbits of unoriented essential simple closed geodesics of length at most
$L=2\operatorname{arcosh}(3x/2)\sim2\log(3x)$; see \eqref{eq:markov_primitive_count} in Section~\ref{sec:modular-torus} for the exact counting identity, which accounts for the two exceptional orbits. The leading constant in our counting formula therefore comes from the density of primitive lattice points in $\RR^2$, the area of the unit ball $B_X$ with respect to the McShane--Rivin norm, the orbit sizes, and the relation between $L$ and $x$. This is what allowed McShane and Rivin to improve the error term in Zagier's result. McShane and Rivin's result applies to \emph{any} hyperbolic once-punctured torus, and uses only convexity of $B_X$. McShane and Rivin further assert that, for a dense set of hyperbolic once-punctured tori, the error in the quadratic asymptotic for the number of simple closed geodesics of length at most $L$ is not $o(L)$, see \cite[Thm.~4.2]{McShaneRivin:1995a}.   

\item Our improvement of the error term for the modular torus $X$ uses its arithmetic properties to obtain quantitative separation of certain supporting normal directions of $B_X$ from rational directions. Combined with the normal-turn estimates of Doan, Li, and Nguyen \cite{DoanLiNguyen:2026}, this yields a sharper estimate for the number of lattice points in dilates of $B_X$. We then pass to primitive lattice points by M\"obius inversion, using cancellation in sums of the M\"obius function obtained from zero-free regions of the Riemann zeta function. Together, these ingredients give the improved error bounds for $M(x)$.

\item Counting simple geodesics on more general hyperbolic surfaces was pioneered by Mirzakhani \cite{Mirzakhani:2008} who  proved that the number of simple closed geodesics of length at most $L$ on a complete finite-area hyperbolic surface $Y$ of genus $g$ with $n$ cusps where $3g-3+n>0$ is asymptotic to $c_YL^{6g-6+2n}$ for a positive $c_Y$ depending on the surface. Eskin,  Mirzakhani, and Mohammadi \cite{EskinMirzakhaniMohammadi:2022} gave error estimates with a power saving when $Y$ is closed of genus $g\geq 2$.   
\end{enumerate}

\enlargethispage{-\baselineskip}
\subsection{Numerics}
We computed $M(10^k)$ for every integer $1\leq k\leq 30\,000$. Computing these values took about $70$ minutes on a standard laptop. Figure~\ref{fig:normalized-error} shows the normalized error
$\bigl(M(x)-C(\log(3x))^2\bigr)/\sqrt{\log x}$ at $x=10^k$. Its sampled values lie between approximately $-1.113$ and $1.150$. This seems consistent with \eqref{eq:conjecture_bound}.

\begin{figure}[htbp]
\centering
\includegraphics[width=\linewidth]{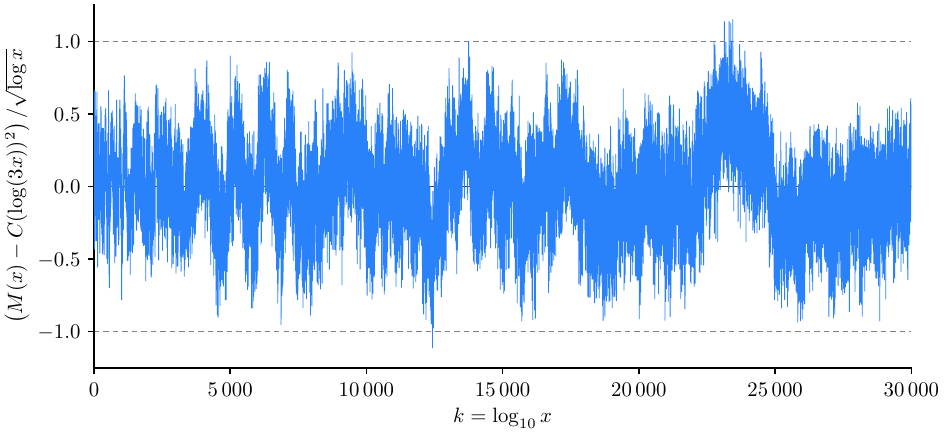}
\caption{Normalized error term}
\label{fig:normalized-error}
\end{figure}

The largest numerical verification of the uniqueness conjecture we could find in the literature appears in \cite[App.~F]{Perling:2022}: it covers all Markov numbers less than $10^{15000}$ (see \cite{RosenPatterson:1971a,Borosh:1975,Baragar:1996} for earlier experiments). We verified the uniqueness conjecture numerically up to $10^{30000}$. The computation took about $90$ minutes on a standard laptop. In fact, we have verified the slightly stronger statement that there are no two different ordered Markov triples $(p,q,r)$, $(p',q',r')$ with $r,r'\leq 10^{30000}$ satisfying $r=r'\pmod{2^{128}}$. We found $M(10^{30000})=862\,356\,773$, while the main term in the asymptotic formula predicts about
$862\,356\,738$ triples.

\section{The modular punctured torus and simple geodesics}
\label{sec:modular-torus}
We now recall the connection between Markov triples and simple closed geodesics on the modular punctured torus. This connection goes back at least to Cohn \cite{Cohn:1955} and has been used in many papers including \cite{Zagier:1982, Series:1985a, McShaneRivin:1995,McShaneRivin:1995a, Gaster:2022, Springborn:2018} and others.
Consider the modular commutator group 
$\Gamma=[\mathrm{PSL}_2(\ZZ),\mathrm{PSL}_2(\ZZ)]$ and the associated quotient
\begin{equation}
\qquad X=\Gamma\backslash\mathbb H.
\end{equation}
The group $\Gamma$ has index six in $\mathrm{PSL}_2(\ZZ)$, and
$X$ is a complete hyperbolic punctured torus. The puncture is a cusp at infinity
\cite{Newman:1962},\cite[Sec.~12]{Springborn:2018}.
The images of the matrices
\begin{equation}\label{eq:modular_generators}
A=\begin{pmatrix}1&-1\\-1&2\end{pmatrix},
\qquad
B=\begin{pmatrix}1&1\\1&2\end{pmatrix}
\end{equation}
generate $\Gamma$. Note that 
\begin{equation}
\trace A=\trace B=\trace(AB)=3,
\qquad
[A,B]=\begin{pmatrix}-1&0\\6&-1\end{pmatrix}.
\end{equation}
The commutator represents a loop around the puncture and has
trace $-2$.

\begin{figure}[htbp]
\centering
\begin{tikzpicture}[x=2.2cm,y=2.2cm,
  seam/.style={densely dashed,thin,black!60},
  paired/.style={semithick,postaction={decorate}},
  paired A/.style={paired,black,
    decoration={markings,
      mark=at position .55 with {\arrow{Stealth}}}},
  paired B/.style={paired,black,
    decoration={markings,
      mark=at position .53 with {\arrow{Stealth}},
      mark=at position .57 with {\arrow{Stealth}}}}]
  \fill[FigureBlue!18]
    (-1,2.05) -- (-1,0) arc[start angle=180,end angle=0,radius=.5]
    arc[start angle=180,end angle=0,radius=.5] -- (1,2.05) -- cycle;
  \draw[seam] (-.5,.5) -- (-.5,2.1);
  \draw[seam] (.5,.5) -- (.5,2.1);
  \draw[seam] (-1,0) arc[start angle=180,end angle=0,radius=1];
  \draw[seam] (0,0) arc[start angle=180,end angle=90,radius=1];
  \draw[seam] (-1,1) arc[start angle=90,end angle=0,radius=1];
  \draw[paired A] (1,0) -- (1,1.75) node[pos=.76,right] {$s_A$};
  \draw[paired A] (0,0) arc[start angle=0,end angle=180,radius=.5]
    node[pos=.5,below=5pt] {$s_A'$};
  \draw[paired B] (-1,0) -- (-1,1.75) node[pos=.76,left] {$s_B$};
  \draw[paired B] (0,0) arc[start angle=180,end angle=0,radius=.5]
    node[pos=.5,below=5pt] {$s_B'$};
  \draw[densely dashed,thin,black] (-1,1.75) -- (-1,2.1);
  \draw[densely dashed,thin,black] (1,1.75) -- (1,2.1);
  \draw[thin] (-1.35,0) -- (1.35,0) node[right] {$\RR$};
  \foreach \x/\lab in {-1/-1,0/0,1/1}
    \node[below=3pt] at (\x,0) {$\lab$};
\end{tikzpicture}
\par\smallskip
{\small $A\cdot s_A=s_A',\qquad B\cdot s_B=s_B'.$\par}
\caption{Fundamental domain for $\Gamma$.}
\label{fig:modular-fundamental-domain}
\end{figure}

Choose the identification 
\begin{equation}\label{eq:homology_identification}
\begin{aligned}
H_1(X,\ZZ)
&\cong \ZZ^2 \\
\gamma_u
&\mapsfrom u.
\end{aligned}
\end{equation}
for which the
classes of $A$ and $B$ form the standard basis. The free homotopy
classes of oriented, essential, simple, closed curves  are parametrized by primitive vectors
\begin{equation}
\ZZ^2_{\mathrm{prim}}
=\{(p,q)\in\ZZ^2:\gcd(p,q)=1\}.
\end{equation} Here essential means that the curve is not homotopic to a point or to the puncture/cusp.

Each class has a unique simple closed geodesic representative
$\gamma_v$ \cite[Sec.~2]{McShaneRivin:1995}.
The geometric intersection number of the curves represented by
$u$ and $v$ is $|\det(u,v)|$
\cite[Sec.~1.2.3]{FarbMargalit:2012}. We call two primitive
vectors modulo sign \emph{Farey neighbours} if
$|\det(u,v)|=1$; they correspond to curves intersecting once.
A \emph{Farey triple} consists of three such classes that are
pairwise Farey neighbours. Equivalently, after choosing signs,
it has representatives $u,v,u+v$ with $|\det(u,v)|=1$.

The connection with the Markov equation comes from the Fricke
trace identity. For $P,Q\in\mathrm{SL}_2(\RR)$, set
\begin{equation}
a=\trace P,\qquad b=\trace Q,\qquad c=\trace(PQ).
\end{equation}
Then \cite[Eq.~(2.2.8)]{Goldman:2009} gives
\begin{equation}
a^2+b^2+c^2-abc=\trace([P,Q])+2.
\end{equation}
For a Farey triple represented by $u,v,u+v$, base $\gamma_u$ and
$\gamma_v$ at their unique intersection point. These loops form
a free generating pair, with product representing $\gamma_{u+v}$. Taking their
holonomy matrices as $P,Q$, the traces $a,b,c$ are those of the
three curves. The commutator represents a loop going around the cusp, and has trace $-2$,
so the right-hand side is zero. On $X$, the lift of the holonomy
determined by $A$ and $B$ has positive traces on simple closed
curves, and these traces are three times Markov numbers
\cite{Cohn:1955}\cite[Sec.~2]{Gaster:2022}.
Consequently, the normalized traces $(a/3,b/3,c/3)$ of a
Farey triple satisfy the Markov equation.

This correspondence respects the recursive constructions.
For $w\in\ZZ^2_{\mathrm{prim}}$, write $t_w=\trace(\gamma_w)$
for the positive holonomy trace of $\gamma_w$. If $|\det(u,v)|=1$, the two Farey
triangles adjacent to the edge joining $u$ and $v$ have third
vertices $u+v$ and $u-v$, considered modulo sign. The Fricke identity
\cite[Thm.~2.3]{DoanLiNguyen:2026} and the trace equation above give
\begin{equation}\label{eq:Fricke_identity}
t_{u+v}+t_{u-v}=t_ut_v,\quad\quad  
t_{u+v}t_{u-v}=t_u^2+t_v^2.
\end{equation}

Passing from the triangle with vertices $u,v,u+v$ to the adjacent
triangle with vertices $u,v,u-v$ keeps the normalized traces of $u$ and $v$
fixed. If $(x,y,z)=(t_u/3,t_v/3,t_{u+v}/3)$, then
the first equality in \eqref{eq:Fricke_identity} gives $t_{u-v}/3=3xy-z$. Hence this move changes
the normalized trace coordinates by the Vieta transform
\begin{equation}
(x,y,z)\longmapsto(x,y,3xy-z).
\end{equation}
The initial triangle represented by $A$, $B$, and $AB$ has
normalized traces $(1,1,1)$. The resulting recursion recovers
the Markov tree; compare
\cite[Prop.~12.1]{Springborn:2018}.

Unoriented simple closed geodesics, modulo the isometries of
$X$, correspond to Markov triples with their entries in
nondecreasing order \cite[Secs.~12--15]{Springborn:2018}.
The triple $(x,y,z)$ corresponds to the orbit whose geodesics
have trace $3z$. The trace--length relation
\begin{equation}\label{eq:trace_length}2\cosh(\ell_X(\gamma)/2)=|\trace\gamma|
\end{equation}
\cite[Sec.~2]{Gaster:2022} therefore gives
\begin{equation}\label{eq:markov_length}
L_z=2\operatorname{arcosh}(3z/2).
\end{equation}

The full isometry group of $X$ has order twelve
\cite[Sec.~12]{Springborn:2018}.
Its central elliptic
involution fixes every unoriented simple closed geodesic, so
the action on primitive vectors modulo sign factors through
a dihedral group of order six. This action is free except on
the two orbits corresponding to $(1,1,1)$ and $(1,1,2)$,
which contain three geodesics each. Every other orbit contains
six geodesics; see the discussion of the exceptional symmetry
stabilizers in \cite[Sec.~2]{Fisac:2025}.

Let $S_X(L)$ denote the number of unoriented simple closed
geodesics on $X$ of length at most $L$. For $T\ge2$, the
two exceptional triples are both included in $M(T)$. Each contributes
three rather than six geodesics, so the preceding correspondence gives
the exact identity
\begin{equation}\label{eq:markov_geodesic_count}
S_X(L_T)=6\bigl(M(T)-2\bigr)+2\cdot 3,
\qquad L_T=2\operatorname{arcosh}(3T/2)=2\log(3T)+O(T^{-2}).
\end{equation}

The next section introduces the McShane--Rivin norm. Its defining feature is that it
agrees with $\ell_X(\gamma_v)$ on primitive integral classes
\cite[Sec.~5]{McShaneRivin:1995}. Writing $B_X$ for its unit
ball, \eqref{eq:markov_geodesic_count} becomes
\begin{equation}\label{eq:markov_primitive_count}
M(T)=\frac1{12}\#\bigl(L_TB_X\cap\ZZ^2_{\mathrm{prim}}\bigr)+1.
\end{equation}
This reduces the counting of Markov triples to the counting
of primitive lattice points in dilates of a fixed convex body.

\section{McShane-Rivin norms}\label{sec:McShane-Rivin-norms}
In the previous section we described the correspondence between Markov triples and simple closed geodesics on the modular punctured torus. In this section we describe the McShane-Rivin norm, which is a key tool in counting these geodesics. We start by recalling the definition of the McShane-Rivin norm and its basic properties. This is not restricted to the modular punctured torus, but applies to $X$ any complete finite-area hyperbolic once-punctured torus.

Choose an identification $H_1(X,\ZZ)\cong\ZZ^2$. For a
primitive vector $v\in \ZZ^2_{\mathrm{prim}}$, let $\gamma_v$ be its simple closed geodesic
representative. McShane and Rivin
\cite{McShaneRivin:1995,McShaneRivin:1995a} showed that there
is a norm $\MRnorm{\cdot}:\RR^2\to\RR$ satisfying that for all such $v$ we have
\begin{equation}
\MRnorm{v}=\ell_X(\gamma_v).
\end{equation}
On integral multiples, the norm is determined by homogeneity:
\begin{equation}
\MRnorm{dv}=|d|\ell_X(\gamma_v)\qquad(d\in\ZZ).
\end{equation}
These values extend uniquely to a norm on real homology.
By equivalence of norms, there exist constants $c,C>0$ such that for every $x\in \RR^2$ 
\begin{equation}\label{eq:equivalence_of_norms}c \abs{x}\leq \MRnorm{x}\leq C\abs{x}.
\end{equation}
where $\abs{\cdot}$ is the Euclidean norm.
Let
\begin{equation}
B_X=\{x\in\RR^2:\MRnorm{x}\leq1\}
\end{equation}
be the unit ball with respect to the $\MRnorm{\cdot}$ norm.
We call $B_X$ the McShane--Rivin ball. It is a centrally
symmetric, strictly convex compact set containing the origin in its interior. 
\cite{McShaneRivin:1995,McShaneRivin:1995a, DoanLiNguyen:2026}. Contrary to what Figure \ref{fig:mcshane-rivin-ball} might suggest this ball has corners at all rational directions. 

Define the full and primitive lattice-point counts by
\begin{equation}\label{eq:lattice_point_counting_functions}
N_X(R)=\#(RB_X\cap\ZZ^2),
\qquad
P_X(R)=\#(RB_X\cap\ZZ^2_{\mathrm{prim}}).
\end{equation}
The number of oriented simple closed geodesics on $X$ of
length at most $R$ is $P_X(R)$.

An overview of the rich history of counting lattice points in convex domains can be found in \cite{IvicKratzelKuhleitnerNowak:2006}. From the fact that $B_X$ is a convex domain it follows from results of Jarn\'{\i}k and Steinhaus (See \cite[Sec 3.1]{IvicKratzelKuhleitnerNowak:2006}) that  
\begin{equation}\label{eq:convex_bound_JS}
  N_X(R)=\vol(B_X)R^2+O(R)\end{equation}
\begin{wrapfigure}{r}{0.42\textwidth}
\centering
\scalebox{1.1}{%
\begin{tikzpicture}[x=1.9cm,y=1.9cm]
  \begin{scope}
    \clip (-0.8,-0.8) rectangle (1.1,0.8);
    \filldraw[fill=FigureBlue!18,draw=FigureBlue,line width=.55pt]
    \MRBallBoundary;
    \draw[thin,-{Stealth[length=3pt]}] (-0.78,0) -- (1.08,0);
    \draw[thin,-{Stealth[length=3pt]}] (0,-0.78) -- (0,0.78);
    \draw[thin] (1,-0.035) -- (1,0.035);
    \node[below=4pt,font=\small] at (1,0) {$1$};
    \node[below left=2pt,font=\small] at (0,0) {$0$};
    \fill[black] (0,0) circle[radius=.65pt];
  \end{scope}
\end{tikzpicture}%
}
\caption{$B_X$ for the modular torus}
\label{fig:mcshane-rivin-ball}
\end{wrapfigure}
where the implied constant is bounded by the length of the boundary of $B_X$. Extra smoothness of a convex region usually results in better error estimates with the Gauss circle problem as the ultimate goal (See e.g. \cite{Huxley:1996,IvicKratzelKuhleitnerNowak:2006})  The error estimation \eqref{eq:convex_bound_JS} is the basis for the error estimation by McShane and Rivin \cite[Thm. 0.1]{McShaneRivin:1995}. For general convex domains \eqref{eq:convex_bound_JS} is best possible and is attained by the square. 

In this paper we  show that for the McShane-Rivin ball $B_X$ for the once-punctured modular torus we can get a better error term. 
\WFclear

Understanding the geometry of $B_X$ and its boundary $\partial B_X$ is crucial in doing so. We shall rely crucially on the recent work of Doan, Li and Nguyen \cite{DoanLiNguyen:2026}.

\subsection{Support directions and normal turn} \label{sec:support_and_normal_turn}
We now introduce some geometric notation and terminology which will be used in the rest of the paper. Here we largely follow \cite{DoanLiNguyen:2026}.
\subsubsection{Functionals:} Consider the vector space of linear functionals $(\RR^2)^*=\hbox{Hom}(\RR^2,\RR)$. This can be identified with $\RR^2$ in the usual way via the linear isomorphism
\begin{equation}
\begin{aligned}
\mathbb{R}^{2}
&\longrightarrow \operatorname{Hom}(\mathbb{R}^{2},\mathbb{R}) \\
\lambda^{\#}
&\longmapsto \lambda=\langle \lambda^{\#},\cdot \rangle.
\end{aligned}
\end{equation}
Here $\langle\cdot,\cdot\rangle$ is the standard Euclidean inner product on $\RR^2$. The vector $\lambda^{\#}$ is called the metric dual of the covector $\lambda$. We let $\abs{\lambda}=\abs{\lambda^{\#}}$. Furthermore, define for covectors $\lambda_1, \lambda_2\in (\RR^2)^*$ the linear map $T_{\lambda_1,\lambda_2}:\RR^2\to\RR^2$ by $T_{\lambda_1,\lambda_2}(x)=(\lambda_i(x))_{i=1,2}$ and define
\begin{equation}
    \det(\lambda_1,\lambda_2):=\det T_{\lambda_1,\lambda_2}=\det(\lambda_1^{\#},\lambda_2^{\#}).
\end{equation}
If $\lambda_1,\lambda_2\ne0$ and $\theta\in[0,\pi]$ is the angle between their metric duals, then
$\abs{\det(\lambda_1,\lambda_2)}=\abs{\lambda_1}\abs{\lambda_2}\sin\theta$.
For linearly independent $u,v\in\RR^2$, write $[u\ v]$ for the matrix
with columns $u$ and $v$, and $\det(u,v)=\det[u\ v]$.
Identifying $T_{\lambda_1,\lambda_2}$ with its matrix in the standard basis,
we have
\begin{equation}
T_{\lambda_1,\lambda_2}[u\ v]
=\begin{pmatrix}
\lambda_1(u)&\lambda_1(v)\\
\lambda_2(u)&\lambda_2(v)
\end{pmatrix}.
\end{equation}
Taking determinants gives
\begin{equation}\label{eq:determinant_calculation}
\det(\lambda_1,\lambda_2)
=\frac{\lambda_1(u)\lambda_2(v)-\lambda_2(u)\lambda_1(v)}{\det(u,v)}.
\end{equation}

\subsubsection{Supporting lines:} 
For $\lambda\in (\RR^2)^*$ the line $\lambda=1$ is called a supporting line of $B_X$ at $x\in\partial B_X$ if $\lambda(x)=1$ and $\lambda(y)\leq 1$ for all $y\in B_X$. We denote the set of supporting functionals at $x$ by
\begin{equation}\partial _{B_X}(x):=\{\lambda\in (\RR^2)^* : \lambda(x)=1,\lambda(y)\leq 1 \text{ for all }y\in B_X\}.\end{equation}
As in \cite{DoanLiNguyen:2026}, for $0\neq a\in\RR^2$ we define the set of supporting functionals in the direction $a$ by $\partial B_X(a)=\partial _{B_X}(a/\MRnorm{a})$. For $\lambda\in \partial _{B_X}(x)$ with metric dual $\lambda^{\#}$ we have that \begin{equation}n(\lambda):=\frac{\lambda^{\#}}{\abs{\lambda^{\#}}}\end{equation} is the unit outward normal vector of the supporting line $\lambda=1$. For any $y\in \RR^2$ we have 
\begin{equation}
\abs{\lambda(y)}\leq \MRnorm{y}.\end{equation} 

\noindent
\begin{minipage}[t]{.49\linewidth}
\vspace{0pt}
We next define the supporting functional selected by a second direction. For $a,b\in \RR^2$ linearly independent define $\lambda_{a\vert b}\in \partial B_X(a)$ to be the unique supporting functional at $a$ such that \begin{equation}\lambda_{a\vert b}(b)=\max_{\lambda\in \partial B_X(a)}\lambda(b).\end{equation} This is unique: if two functionals agree on both $a$ and $b$, they are equal because $a$ and $b$ form a basis of $\RR^2$.
\end{minipage}\hfill
\begin{minipage}[t]{.48\linewidth}
\vspace{0pt}
\centering
\makeatletter\def\@captype{figure}\makeatother
\begin{tikzpicture}[x=2.55cm,y=2.55cm]
  \begin{scope}
    \clip (-.7,-.1) rectangle (.7,.75);
    \filldraw[fill=FigureBlue!18,draw=FigureBlue,line width=.5pt]
      \MRBallBoundary;
  \end{scope}
  \draw[thin,black!55,-{Stealth[length=3pt]}] (-1.05,0) -- (1.08,0)
    node[below right,font=\scriptsize] {$x$};
  \draw[thin,black!55,-{Stealth[length=3pt]}] (0,-.1) -- (0,1.02)
    node[above left,font=\scriptsize] {$y$};
  \coordinate (P) at (MRball-1-1);
  \coordinate (Q) at (MRball-1-2);
  \draw[densely dashed,black!55] (0,0) -- (P);
  \draw[densely dashed,black!55] (0,0) -- (Q);
  \draw[FigureBlue,line width=1pt]
    (P)
    foreach \p/\q in {
      13/14,12/13,11/12,10/11,9/10,8/9,7/8,6/7,
      11/13,5/6,9/11,4/5,11/14,7/9,10/13,3/4,
      8/11,5/7,7/10,9/13,2/3,9/14,7/11,5/8,
      8/13,3/5,7/12,4/7,5/9,6/11,7/13,1/2} { -- (MRball-\p-\q) };
  \draw[black,line width=.8pt]
    (-1.05,0.898825676088198) -- (1.08,0.384071793482650)
    node[pos=.24,above=3pt,sloped,font=\scriptsize] {$\lambda_{u\vert v}=1$};
  \draw[thin,black!65] (P) -- (.69,.59);
  \node[anchor=west,font=\scriptsize] at (.69,.59) {$\dfrac{u}{\MRnorm{u}}$};
  \draw[thin,black!65] (Q) -- (.10,.49) -- (-.16,.49);
  \node[anchor=east,font=\scriptsize] at (-.16,.49) {$\dfrac{v}{\MRnorm{v}}$};
  \fill[red!75!black] (P) circle[radius=1.4pt];
  \fill[FigureBlue] (Q) circle[radius=1.3pt];
  \fill[black] (0,0) circle[radius=.7pt];
  \node[below left=2pt,font=\scriptsize] at (0,0) {$0$};
\end{tikzpicture}
\setlength{\captionindent}{0pt}
{\small\caption{$\lambda_{u\vert v}=1$,  $u=(1,1)$,
$v=(1,2)$.}}
\label{fig:supporting-line-u-v}
\end{minipage}

By \cite[Lem 2.6]{DoanLiNguyen:2026} we have 
\begin{equation}\frac{\MRnorm{a+tb}-\MRnorm{a}}{t}\to \lambda_{a\vert b}(b)\text{ for }t\to 0^+.
\end{equation}

Let $\muturn$ be the positive Borel measure on $\RR^2$ that measures
the angular change in supporting normals of $B_X$. It is supported on
$\partial B_X$ and normalized by $\muturn(\partial B_X)=2\pi$. See \cite{DoanLiNguyen:2026} and \cite[p. 221 ff.]{Schneider:1998}
for additional background on $\muturn$. The measure is denoted by $\hbox{Turn}(\cdot)$ in \cite{DoanLiNguyen:2026} and by $C_0(B_X,\cdot)$ in \cite{Schneider:1998}.

For a primitive vector $p\in\ZZ^2$, we call $\infnorm{p}$ the height of both $p$ and the corresponding boundary point $p/\MRnorm{p}$. Consider the corners of $B_X$ coming from primitive lattice points of height at most $H$, i.e. \begin{equation}\label{eq:corners_definition}
C_H:=\{\tfrac{p}{\MRnorm{p}}\in \partial B_X:p\in \ZZ^2_{\mathrm{prim}}, \infnorm{p}\leq H\}.    
\end{equation}
Then 
\begin{equation}\partial B_X\backslash C_H\end{equation}
consists of $\# C_H$ connected components $\Gamma_{H,j}$ of Euclidean lengths $s_{H,j}$. The two endpoints of each component lie in $C_H$ and have heights as defined above. Here we have ordered the arcs cyclically, i.e. such that $\overline{\Gamma_{H,j}}$ and $\overline{\Gamma_{H,j+1}}$ share a common endpoint. We also  denote the set of angular directions of supporting normals of $B_X$ in  $\Gamma_{H,j}$ by \begin{equation}J_{H,j}\subseteq S^1
\end{equation}
Therefore $J_{H,j}$ is an open connected arc of length $\muturn(\Gamma_{H,j}).$  We denote the connected arc length of $J_{H,j}$ by $\abs{J_{H,j}}$, so that $\muturn{\Gamma_{H,j}}=\abs{J_{H,j}}$.

The following result about once-punctured hyperbolic tori is due to Doan, Li, and Nguyen.
\begin{thm}[{\cite[Thm.~1.1 and Prop.~4.1]{DoanLiNguyen:2026}}] \label{thm:DoanLiNguyen} There exist positive constants $\kappa$, $\Kappa$, $m$, $M$ depending only on $X$ such that for $H\geq 1$ we have 
  \begin{enumerate}
    \renewcommand{\theenumi}{\roman{enumi}}
    \renewcommand{\labelenumi}{\theenumi)}
    \item \label{thm:DoanLiNguyen_uno}$\muturn(\partial B_X\backslash  C_H)\leq M e^{-\kappa H}$
    \item \label{thm:DoanLiNguyen_duo}$ \muturn(\{w\})\geq m e^{-\Kappa H} \text{ for } w\in C_H .$
  \end{enumerate}
  \end{thm}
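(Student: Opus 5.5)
The plan is to make both statements quantitative through one-sided derivatives of $\MRnorm{\cdot}$ at rational directions, computed from the trace recursion \eqref{eq:Fricke_identity}. The normal angles are then read off with the determinant formula \eqref{eq:determinant_calculation}.

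\emph{Step 1: one-sided derivatives at a corner.} Fix $p\in\ZZ^2_{\mathrm{prim}}$ and a Farey neighbour $v$ of $p$. Put $\mu=e^{\ell_X(\gamma_p)/2}$, so that $t_p=\mu+\mu^{-1}$ by \eqref{eq:trace_length}. Iterating \eqref{eq:Fricke_identity} gives the linear recursion $t_{(n+1)p+v}=t_pt_{np+v}-t_{(n-1)p+v}$. Hence $t_{np+v}=\alpha\mu^{n}+\beta\mu^{-n}$ with
\[
\alpha=\frac{t_{p+v}-t_v\mu^{-1}}{\mu-\mu^{-1}}.
\]
Taking $t=1/n$ in \cite[Lem 2.6]{DoanLiNguyen:2026} gives
\[
\lambda_{p\vert v}(v)=\lim_{n\to\infty}\bigl(\MRnorm{np+v}-n\MRnorm{p}\bigr)=2\log\alpha,
\]
and the same computation with $-v$ gives $\lambda_{p\vert -v}(-v)=2\log\alpha'$, where $\alpha'$ is $\alpha$ with $t_{p+v}$ replaced by $t_{p-v}$. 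The second Fricke identity $t_{p+v}t_{p-v}=t_p^2+t_v^2$, combined with $t_{p+v}+t_{p-v}=t_pt_v$, collapses the product: $\alpha\alpha'=t_p^2/(\mu-\mu^{-1})^2=\coth^2(\ell_X(\gamma_p)/2)$. The jump $\lambda_{p\vert v}(v)-\lambda_{p\vert -v}(v)$ therefore equals $4\log\coth(\ell_X(\gamma_p)/2)$, which is $\asymp e^{-\ell_X(\gamma_p)}$.

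\emph{Step 2: part (ii).} Both $\lambda_{p\vert\pm v}$ take the value $1$ at $p$, and $\det(p,v)=\pm1$. So \eqref{eq:determinant_calculation} gives $\abs{\det(\lambda_{p\vert v},\lambda_{p\vert-v})}=4\log\coth(\ell_X(\gamma_p)/2)$. The left side equals $\abs{\lambda_{p\vert v}}\abs{\lambda_{p\vert -v}}\sin\theta_p$, where $\theta_p=\muturn(\{p/\MRnorm{p}\})$. Supporting functionals satisfy $\abs{\lambda}\le 1/c$ by \eqref{eq:equivalence_of_norms}, so $\theta_p\gg e^{-\ell_X(\gamma_p)}$. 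Finally $\ell_X(\gamma_p)=\MRnorm{p}\le C\sqrt2\,\infnorm{p}\le C\sqrt2\,H$, which gives (ii) with $\Kappa=\sqrt2\,C$.

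\emph{Step 3: part (i).} Consecutive points of $C_H$ come from primitive vectors $u,v$ with $\infnorm{u},\infnorm{v}\le H$ that are Farey neighbours; this is the standard Farey property of primitive vectors ordered by angle. The mediant $u+v$ has height $>H$, so $\max(\abs{u},\abs{v})\gg H$. The turn of the open arc $\Gamma_{H,j}$ is the angle between $\lambda_{u\vert v}$ and $\lambda_{v\vert u}$. By \eqref{eq:determinant_calculation} with $\det(u,v)=\pm1$, this angle is controlled by
\[
\abs{\det(\lambda_{u\vert v},\lambda_{v\vert u})}=\bigl|\MRnorm{u}\MRnorm{v}-\lambda_{v\vert u}(u)\,\lambda_{u\vert v}(v)\bigr|.
\]
Step 1 gives closed forms: $\lambda_{u\vert v}(v)=2\log\alpha_{u,v}$ and symmetrically $\lambda_{v\vert u}(u)=2\log\alpha_{v,u}$. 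The task is then to show $2\log\alpha_{u,v}=\MRnorm{v}+O(e^{-c\ell_u})$, and symmetrically for $u$, so that the determinant is $O(e^{-c'\max(\ell_u,\ell_v)})\le e^{-\kappa' H}$. Summing over the $\#C_H\ll H^2$ arcs gives (i) after shrinking $\kappa$.

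\emph{Main obstacle.} The hardest part is the estimate in Step 3. One must check that $t_{u+v}-t_v\mu^{-1}$ is close to $t_v(\mu-\mu^{-1})$ times $e^{\ell_v/2}/t_v$, with an error exponentially small in the \emph{larger} of the two lengths. An error small only in the smaller length is useless when one endpoint has bounded height. I would first try to rewrite $\alpha_{u,v}$ via the trace identities as $e^{\ell_v/2}\bigl(1+O(e^{-\ell_u}+e^{-\ell_v})\bigr)$, and then handle the cross term in the product by symmetry. If that fails, the fallback is a softer route: show that the turn is purely atomic, so that the turn of each arc equals the sum of $\theta_p$ over corners of height $>H$ inside it. Step 1 gives the upper bound $\theta_p\ll e^{-\ell_X(\gamma_p)}\le e^{-c\abs p}$, whose sum over $\infnorm{p}>H$ is $\ll e^{-\kappa H}$. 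Atomicity would itself follow by applying the Step 3 bound along a sequence $H\to\infty$.
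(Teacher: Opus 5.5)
\paragraph{Context and part (ii).} The paper does not prove this statement; it imports it from \cite[Thm.~1.1, Prop.~4.1]{DoanLiNguyen:2026}. The only part of that argument visible in the paper is exactly your Step~1: the formulas $\lambda_{u\vert v}(v)=2\log A_+$ and $\lambda_{u\vert -v}(v)=-2\log A_-$, with $A_++A_-=t_v$ and $A_+A_-=t_u^2/(t_u^2-4)$ (Proposition~\ref{prop:elementary-prop}). Your Step~2 then proves (ii), up to two harmless slips. First, $\lambda_{p\vert\pm v}$ take the value $\MRnorm{p}$, not $1$, at $p$. Hence $\abs{\det(\lambda_{p\vert v},\lambda_{p\vert -v})}=4\MRnorm{p}\log\coth(\MRnorm{p}/2)\asymp\MRnorm{p}e^{-\MRnorm{p}}$, which matches the order $\infnorm{p}e^{-\MRnorm{p}}$ quoted after the theorem. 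Second, supporting functionals satisfy $\abs{\lambda}\le C$, not $1/c$.

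\paragraph{The gap in part (i).} Your reduction of the turn of $\Gamma_{H,j}$ to
\[
\abs{\det(\lambda_{u\vert v},\lambda_{v\vert u})}=\abs{\ell_u\epsilon_1+\ell_v\epsilon_2-\epsilon_1\epsilon_2},\qquad \epsilon_1=\ell_v-\lambda_{u\vert v}(v),\quad \epsilon_2=\ell_u-\lambda_{v\vert u}(u),
\]
is sound. However, you never prove the estimate on $\epsilon_1,\epsilon_2$ that it needs, and both versions you write down are too weak. Your stated target $\epsilon_1=O(e^{-c\ell_u})$, and your proposed form $\alpha_{u,v}=e^{\ell_v/2}\bigl(1+O(e^{-\ell_u}+e^{-\ell_v})\bigr)$, only give $\epsilon_i=O(e^{-\min(\ell_u,\ell_v)})$. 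On arcs with one endpoint of bounded height, e.g.\ $u=(1,0)$, $v=(H,1)$, this is $O(1)$, so $\ell_u\epsilon_1$ or $\ell_v\epsilon_2$ is not small. The fallback does not rescue this. Purely atomic turn is the qualitative content of (i), and you propose to derive it from the very Step~3 bound that is missing, so the argument is circular.

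\paragraph{How to close it.} What you need is a \emph{multiplicative} bound, and Step~1 already contains it.
\begin{itemize}
\item Setting $n=0$ in your recursion gives $\alpha_{u,v}+\alpha'_{u,v}=t_v$, and you showed $\alpha_{u,v}\alpha'_{u,v}=\coth^2(\ell_u/2)$. So $\alpha_{u,v},\alpha'_{u,v}$ are the roots of $x^2-t_vx+\coth^2(\ell_u/2)$, while $e^{\pm\ell_v/2}$ are the roots of $x^2-t_vx+1$.
\item Since $(\alpha_{u,v}-\alpha'_{u,v})(\mu_u-\mu_u^{-1})=t_{u+v}-t_{u-v}$, and likewise for $\alpha_{v,u}$, both $\alpha_{u,v}$ and $\alpha_{v,u}$ are the larger roots exactly when the mediant has the larger trace. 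You must check this. For large $H$ it is forced: the smaller roots would give $\lambda_{u\vert v}(v),\lambda_{v\vert u}(u)\le 2\log\coth(\ell_{\min}/2)=O(1)$. But $\lambda_{u\vert v}(v)\ge\ell_v(1-Cs_{H,j})$ and $\lambda_{v\vert u}(u)\ge\ell_u(1-Cs_{H,j})$, with $s_{H,j}\ll 1/H$ and $\max(\ell_u,\ell_v)\gg H$.
\item With the larger root, comparing the two quadratics gives
\[
0\le\epsilon_1=2\log\frac{t_v+\sqrt{t_v^2-4}}{t_v+\sqrt{t_v^2-4\coth^2(\ell_u/2)}}\ll\frac{\coth^2(\ell_u/2)-1}{t_v^2}\ll e^{-\ell_u-\ell_v},
\]
and symmetrically $\epsilon_2\ll e^{-\ell_u-\ell_v}$.
\item Hence $\abs{\det(\lambda_{u\vert v},\lambda_{v\vert u})}\ll(\ell_u+\ell_v)e^{-\ell_u-\ell_v}\ll e^{-cH}$. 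The turn of each arc is bounded away from $\pi$: the arc and its antipode are disjoint, and the height-one corners carry positive turn. So the turn is $\ll$ its sine. Summing over the $O(H^2)$ arcs then gives (i).
\end{itemize}
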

This says that after removing the corners coming from primitive lattice points of height at most $H$, exponentially little turn remains.  On the other hand the turn at these corners  can all be controlled from below.  In fact a more precise statement is proved in \cite[Prop 4.1]{DoanLiNguyen:2026}, namely that $\muturn(\{p/\MRnorm{p}\})$ for $p\in \ZZ^2_{\mathrm{prim}}$ is of order $\infnorm{p}e^{-\MRnorm{p}}$ but we will not need this stronger result here.

We next show an elementary result about the number of arcs and their lengths:
\begin{lemma}\label{lem:arc_length} The number of arcs equals $\# C_H=O(H^2)$. If $A$ is the smaller of the two endpoint heights of $\Gamma_{H,j}$, then $s_{H,j}\ll (HA)^{-1}$. 
\end{lemma}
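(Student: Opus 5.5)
The count is immediate. $\#C_H$ is the number of primitive $p\in\ZZ^2$ with $\infnorm{p}\le H$, which is at most $(2H+1)^2=O(H^2)$. The map $p\mapsto p/\MRnorm{p}$ is injective on primitive vectors, since distinct primitive vectors point in distinct directions. Removing $\#C_H$ points from the closed curve $\partial B_X$ leaves exactly $\#C_H$ open arcs.

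For the length bound I will use the Farey structure of the primitive vectors of height at most $H$. Order them by angle. Two cyclically consecutive ones, $p$ and $p'$, satisfy $|\det(p,p')|=1$. This is the standard fact that consecutive fractions in the Farey-type sequence of the square $[-H,H]^2$ are unimodular. It follows from Pick's theorem: the triangle $0,p,p'$ contains no lattice points other than its vertices, because any such point would lie in the square of height $\le H$ strictly between the two directions, and we may take it primitive. The arc $\Gamma_{H,j}$ joins $p/\MRnorm{p}$ to $p'/\MRnorm{p'}$ and is convex, so it lies inside the triangle $0,\ p/\MRnorm{p},\ p'/\MRnorm{p'}$. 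Its Euclidean length is therefore at most the sum of the two remaining sides of that triangle. A cleaner bound is obtained by noting that a convex arc lying in an angular sector at bounded distance from the origin has length comparable to the chord plus the angular width. Since $B_X$ contains a Euclidean ball of radius $1/C$ and is contained in one of radius $1/c$ by \eqref{eq:equivalence_of_norms}, its boundary is a Lipschitz graph over the angle $\theta$, with Lipschitz constant depending only on $X$. Hence
\[
s_{H,j}\ll \Delta\theta,
\]
where $\Delta\theta$ is the angle between $p$ and $p'$.

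Now estimate the angle:
\[
\sin\Delta\theta=\frac{|\det(p,p')|}{|p||p'|}=\frac{1}{|p||p'|}.
\]
Here $|p|\asymp\infnorm{p}$ and $|p'|\asymp \infnorm{p'}$, and $\Delta\theta$ is small, at most $\pi/2$ once $H\ge1$, so $\Delta\theta\ll 1/(\infnorm{p}\,\infnorm{p'})$. This gives $(AA')^{-1}$, where $A'$ is the larger endpoint height, and I need $(AH)^{-1}$. The main point is that in Farey sequences of order $H$, consecutive elements satisfy $\infnorm{p}+\infnorm{p'}>H$. This holds in the usual orientation, where it is the standard property of the Farey sequence; in general $p+p'$ is primitive, lies strictly between $p$ and $p'$ in direction, and so must have height greater than $H$. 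Hence $\infnorm{p}+\infnorm{p'}\ge\infnorm{p+p'}>H$, which gives $A'\ge H/2$. Therefore
\[
s_{H,j}\ll \frac{1}{AA'}\ll \frac{1}{AH}.
\]

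The step needing the most care is the unimodularity of consecutive vectors together with the claim that $p+p'$ lies in the open sector between them. Both follow once $\det(p,p')=\pm1$ is known. For that I will use Pick's theorem as above, or cite the classical Farey theory applied in each of the eight octants of the $\infnorm{\cdot}$-square, where primitive vectors of height at most $H$ correspond to Farey fractions of order $H$. The eight octant boundary directions $(\pm1,0)$, $(0,\pm1)$, $(\pm1,\pm1)$ are themselves in the list, so the octants glue together without extra work. The remaining ingredient, that convex arc length is controlled by angular width, uses only the norm-equivalence constants.
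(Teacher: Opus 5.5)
Your proof is correct and follows essentially the paper's route. Both arguments use unimodularity of consecutive primitive vectors together with the mediant argument to get $\max(\infnorm{p},\infnorm{p'})>H/2$, and both use a Lipschitz parametrization of $\partial B_X$ to get $s_{H,j}\ll 1/(\infnorm{p}\infnorm{p'})$ (you parametrize by angle, while the paper reduces to one octant, parametrizes by slope via $t\mapsto(1,t)/\lVert(1,t)\rVert_{X,T}$, and cites Farey theory). One sentence is wrong and should be deleted, though nothing in your final argument depends on it: the arc does not lie inside the triangle $0,\,p/\MRnorm{p},\,p'/\MRnorm{p'}$, because that triangle is contained in $B_X$ and so the arc lies on the far side of the chord.
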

\begin{proof}The first statement is clear since it counts the number of primitive lattice points in the square $[-H,H]^2$, of side length $2H$. The second follows from the following argument: 
The four lines $x=0$, $y=0$, $y=x$, and $y=-x$ divide $\RR^2$ into
eight coordinate sectors. Every boundary ray of these sectors is
represented by a primitive vector of height $1$, so for $H\geq 1$ the arc $\Gamma_{H,j}$  is
contained in precisely one such closed coordinate sector.

Choose a signed coordinate permutation $T$ carrying this sector to
\begin{equation}
 \mathcal C=\{(p,q):0\le q\le p\}.
\end{equation}
Applying $T$ simultaneously to the two endpoint representatives,
write them as $(p,q)$ and $(p',q')$, where
$0\le q\le p$ and $0\le q'\le p'$. The map $T$ preserves $\ZZ^2$,
primitivity, absolute determinants, height, and Euclidean distances.

In the transformed coordinates put
\begin{equation}
\lVert z\rVert_{X,T} =\MRnorm{T^{-1}z}.
\end{equation}
This is a norm with unit ball $T(B_X)$. Since $T$ is a Euclidean isometry,
$\abs{T^{-1}z}=\abs{z}$, and \eqref{eq:equivalence_of_norms} gives
\begin{equation}
 c\abs{z}\le \lVert z\rVert_{X,T}\le C\abs{z}
\end{equation}
with the same constants as for $\MRnorm{\cdot}$. Therefore the norm and metric
estimates below are uniform over the eight sectors.

In $\mathcal C$ the slopes are the Farey fractions $q/p$ of order
$H$. Consecutive fractions $q/p,q'/p'$ satisfy
\begin{equation}
 1\le p,p'\le H,\quad p+p'>H,\quad |pq'-qp'|=1,
 \quad \left|\frac qp-\frac{q'}{p'}\right|=\frac1{pp'}.
\end{equation} by \cite[III, Sec 3.1]{HardyWright:2008}.
The map $t\mapsto(1,t)/\lVert (1,t)\rVert_{X,T}$ traces out the boundary of $T(B_X)$ in $\mathcal C$. It is Lipschitz on $[0,1]$,
uniformly in $T$: the norms $\lVert\cdot \rVert_{X,T}$ have a common Lipschitz bound and
$\lVert(1,t) \rVert_{X,T}\ge c$. Its arc length over an interval is therefore at
most a fixed constant times the interval length, so the arc length of $T\Gamma_{H,j}$ is at most a bounded constant times $\left|\frac qp-\frac{q'}{p'}\right|$.   Since $T$ is a
Euclidean isometry, it preserves the length of the original boundary
arc. Thus $s_{H,j}\ll1/(pp')$.
As $\max(p,p')>H/2$ and $\min(p,p')=A$, this proves the claim. 
\end{proof} 
For a general hyperbolic once-punctured torus, we use the same notation $t_u$
for the absolute holonomy trace of $\gamma_u$. By \eqref{eq:trace_length},
for $u\in\ZZ^2_{\mathrm{prim}}$ we have
\begin{equation}\label{eq:t_u_def}
    t_u=\abs{\trace(\gamma_u)}=2\cosh(\tfrac{\MRnorm{u}}{2}).
\end{equation}
For $u,v\in \ZZ^2_{\mathrm{prim}}$ with $\abs{\det(u,v)}=1$ we set \begin{equation}t_+=t_{u+v}, \quad t_-=t_{u-v},\end{equation} so that the Fricke identity \eqref{eq:Fricke_identity} gives $t_+ + t_- = t_u t_v.$ 
Set  \begin{equation}\label{eq:r_def}r=e^{\MRnorm{u}/2}>1.\end{equation}  Set \begin{equation}A_\pm=\frac{t_\pm -r^{-1}t_v}{r-r^{-1}}.\end{equation} As in the proof of \cite[Prop 4.1]{DoanLiNguyen:2026}  we observe that  $A_\pm>0$ and \begin{equation}
\lambda_{u\vert v}(v)=2\log A_+\quad \lambda_{u\vert -v}( v)=-2\log A_-.\end{equation}
Note that \begin{equation}\lambda_{u\vert v}(u)=\MRnorm{u}\lambda_{u\vert v}(\tfrac{u}{\MRnorm u})=\MRnorm{u}=2\log r\end{equation}
In the next proposition we specialize to the modular punctured torus.
Proposition~\ref{prop:elementary-prop} (used in Lemma~\ref{lem:support_separation})
is the only place where the arithmetic of $X$ enters.

\begin{prop}\label{prop:elementary-prop} Let $X$ be the modular punctured torus, and set $\Delta=t_u^2-4$. 
 Let $K=\QQ(\sqrt{\Delta})$ be the corresponding real quadratic field, $\mathcal O_K$ its ring of integers, and $\sigma$ its non-trivial field automorphism. Then  
\begin{enumerate}
\renewcommand{\theenumi}{\roman{enumi}}
\renewcommand{\labelenumi}{(\theenumi)}
\item $r$ is a unit in $\mathcal O_K$,
\item \label{prop:nt_ii} $A_\pm \in K$ and $\norm_{K/\QQ}(A_\pm)=\frac{t_u^2}{t_u^2-4}>1$, 
\item \label{prop:nt_iii} $\trace_{K/\QQ}(A_\pm)=A_\pm+\sigma(A_\pm)=t_v$,
\item $\Delta A_\pm$ are algebraic integers in $K$. 

\end{enumerate}
\end{prop}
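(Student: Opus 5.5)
Everything follows from explicit formulas once we use that on the modular torus all traces $t_w$ are integers: they equal three times Markov numbers. So $t_u, t_v, t_\pm \in 3\ZZ$, and $t_u\ge 3$.

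\emph{Step 1: $r$ is a unit.} From $t_u = 2\cosh(\MRnorm{u}/2) = r + r^{-1}$, $r$ is a root of $X^2 - t_uX + 1$. Its discriminant is $t_u^2-4=\Delta$, so $r = (t_u+\sqrt{\Delta})/2 \in K$ is an algebraic integer with norm $1$, hence a unit. Since $t_u\ge 3$, $\Delta\ge 5$. It is not a square, because $t_u^2-4=k^2$ forces $t_u=2$; so $K$ is a genuine real quadratic field. Also $\sigma(r)=r^{-1}$.

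\emph{Step 2: the algebraic identities.} Write $r-r^{-1}=\sqrt{\Delta}$, so $A_\pm = (t_\pm - r^{-1}t_v)/\sqrt{\Delta}\in K$. Applying $\sigma$ gives $\sigma(A_\pm) = (t_\pm - r t_v)/(-\sqrt\Delta)$. Hence
\[
A_\pm+\sigma(A_\pm)=\frac{(r-r^{-1})t_v}{\sqrt\Delta}=t_v,
\]
which is (iii). For the norm,
\[
A_\pm\sigma(A_\pm) = -\frac{(t_\pm-r^{-1}t_v)(t_\pm-rt_v)}{\Delta} = -\frac{t_\pm^2 - t_ut_vt_\pm + t_v^2}{\Delta}.
\]
The Fricke identities \eqref{eq:Fricke_identity} give $t_+ + t_- = t_ut_v$ and $t_+t_-=t_u^2+t_v^2$. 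So $t_\pm$ are the roots of $T^2 - t_ut_vT + (t_u^2+t_v^2)$, which yields $t_\pm^2 - t_ut_vt_\pm + t_v^2 = -t_u^2$. Therefore $\norm(A_\pm) = t_u^2/(t_u^2-4) > 1$, completing (ii).

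\emph{Step 3: integrality.} We have $\Delta A_\pm = \sqrt\Delta\,(t_\pm - r^{-1}t_v)$, a product of the algebraic integers $\sqrt\Delta$, $t_\pm\in\ZZ$, $t_v\in\ZZ$ and $r^{-1}$ (a unit by Step 1). So it is an algebraic integer, giving (iv).

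The only delicate point is bookkeeping in the norm computation: the sign of $\sigma(\sqrt\Delta)$, and recognizing the quadratic relation for $t_\pm$ from the Fricke identities. Everything else is direct substitution.
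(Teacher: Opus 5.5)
Your proof is correct and follows essentially the same route as the paper: you identify $r=(t_u+\sqrt\Delta)/2$ as a unit with $\sigma(r)=r^{-1}$, then read off the trace and norm of $A_\pm$ from the two Fricke identities. The differences are cosmetic: you compute $A_\pm\sigma(A_\pm)$ directly from the quadratic satisfied by $t_\pm$, whereas the paper first notes $\sigma(A_\pm)=A_\mp$; and you get integrality from $\Delta A_\pm=\sqrt\Delta\,(t_\pm-r^{-1}t_v)$ as a product of algebraic integers, whereas the paper exhibits the monic polynomial $x^2-\Delta t_v x+\Delta t_u^2$.
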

\begin{proof}Note that $t_u^2-4>0$ is not a square since $t_u\in\ZZ$ and $t_u\ge3$, so the field is indeed real quadratic. We have $r+r^{-1}=t_u$, so $r$ is the larger root of $x^2-t_ux+1$, which gives $r=(t_u+\sqrt{\Delta})/2$ and $\sigma(r)=r^{-1}$. It follows that $r$ is a unit in $\mathcal{O}_K$. Also we have
    $A_-=\tfrac{rt_v-t_+}{r-r^{-1}}$ by the Fricke identity,
    and $\sigma(A_\pm)=A_\mp$. It follows that \begin{equation}
     \trace_{K/\QQ}(A_\pm)= A_++A_-=\frac{t_+-r^{-1}t_v+(rt_v-t_+)}{r-r^{-1}}=t_v.
      \end{equation}
The second relation in \eqref{eq:Fricke_identity} gives
$t_+t_-=t_u^2+t_v^2$. Hence
\begin{equation}
\begin{aligned}
\norm_{K/\QQ}(A_\pm)=A_+A_-
&=\frac{(t_+-r^{-1}t_v)(t_--r^{-1}t_v)}{(r-r^{-1})^2}\\
&=\frac{t_+t_--t_v^2}{(r-r^{-1})^2}
=\frac{t_u^2}{\Delta}.
\end{aligned}
\end{equation}
                    
To see that $\Delta A_\pm$ are algebraic integers, it suffices to note that they are roots of the monic polynomial with integer coefficients \begin{equation}x^2-\Delta t_v x+\Delta t_u^2.\end{equation} 
\end{proof}

A covector $d\in(\RR^2)^*$ with integer dual vector $d^\#\in\ZZ^2$ is called \emph{integral}. We denote the set of integral covectors by $(\ZZ^2)^*$. The next lemma is key in proving our main theorem: it gives a quantitative separation between the supporting functional selected by a Farey pair and any nonzero integral functional.

\begin{lemma}\label{lem:support_separation}
Let $X$ be the modular punctured torus. There exists a constant $M_1>0$ such that, for every $u,v\in\ZZ^2_{\mathrm{prim}}$ with $\infnorm{u}=A\geq1$, $\infnorm{v}\leq2A$, and $\abs{\det(u,v)}=1$, and every nonzero integral $d\in (\ZZ^2)^*$, one has
\begin{equation}\abs{\det(\lambda_{u\vert v},d)}\geq e^{-M_1\abs{d}A^2}.
\end{equation}
\end{lemma}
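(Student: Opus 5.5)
Write $\lambda=\lambda_{u\vert v}$. The plan is to compute $\det(\lambda,d)$ explicitly in the basis $u,v$ using \eqref{eq:determinant_calculation}. Since $\abs{\det(u,v)}=1$, we get
\begin{equation}
\abs{\det(\lambda,d)}=\abs{\lambda(u)d(v)-\lambda(v)d(u)}=\abs{2\log r\cdot d(v)-2\log A_+\cdot d(u)}.
\end{equation}
Here $m:=d(v)$ and $n:=d(u)$ are integers with $\abs{m},\abs{n}\le \abs{d}\cdot 2\sqrt2 A$, because $\abs{u},\abs{v}\le 2\sqrt2A$. The quantity to bound below is therefore the linear form in logarithms $\Lambda=\abs{m\log r-n\log A_+}=\tfrac12\abs{\log(r^m A_+^{-n})}$. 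Since $\abs{\log y}\ge \tfrac12\min(\abs{y-1},1)$-type inequalities hold, it suffices to bound $\abs{r^m-A_+^n}$ from below (relative to the sizes involved). I will use the algebraic structure from Proposition~\ref{prop:elementary-prop} instead of Baker theory.

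\textbf{Step 1 (nonvanishing).} I will show $r^m\neq A_+^n$ unless $m=n=0$. Taking norms, Proposition~\ref{prop:elementary-prop} gives $\norm(r^m)=\pm1$ while $\norm(A_+^n)=(t_u^2/\Delta)^n$, which is $\neq\pm1$ for $n\ne0$. If $n=0$, then $m\ne0$ and $r^m\ne1$ since $r>1$. The case $m=n=0$ cannot occur because $d\ne0$ and $u,v$ form a basis.

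\textbf{Step 2 (Liouville-type lower bound).} Clearing denominators, set $\alpha=\Delta^{\abs{n}}(r^m-A_+^n)$, or the analogous expression with $A_+^{-n}=A_-^{n}\cdot(\Delta/t_u^2)^n$ when $n<0$, multiplied by $t_u^{2\abs n}$. This is a nonzero algebraic integer in $K$, using that $\Delta A_\pm$ and $r^{\pm1}$ are integral. Hence $\abs{\alpha\,\sigma(\alpha)}\ge1$, so
\begin{equation}
\abs{\alpha}\ge \abs{\sigma(\alpha)}^{-1}.
\end{equation}
Now $\sigma(r)=r^{-1}$, $\sigma(A_+)=A_-$, and $0<A_\pm\le t_v$. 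We also have $\log r\asymp \MRnorm u\ll A$, and $\log t_v\ll \MRnorm v\ll A$, since $\infnorm v\le 2A$. Together with $\log t_u,\log\Delta\ll A$, this gives $\log\abs{\sigma(\alpha)}\ll (\abs m+\abs n)A\ll \abs d A^2$. The same estimate bounds the factor $\Delta^{\abs n}t_u^{2\abs n}$ by $e^{O(\abs dA^2)}$. Dividing out, we get $\abs{r^m-A_+^n}\ge e^{-O(\abs d A^2)}$.

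\textbf{Step 3 (back to the logarithm).} If $\abs{r^mA_+^{-n}-1}\ge 1/2$, then $\Lambda\gg1$ and we are done. Otherwise $\Lambda\gg \abs{r^mA_+^{-n}-1}=\abs{r^m-A_+^n}A_+^{-n}$. We need a lower bound for $A_+$ as well as the upper bound: $A_+A_-=t_u^2/\Delta>1$ and $A_-\le t_v$ give $A_+\ge t_v^{-1}$, so $\abs{\log A_+}\ll A$ and $A_+^{-n}\ge e^{-O(\abs dA^2)}$. Choosing $M_1$ large absorbs all constants, and also covers $A\ge1$, $\abs d\ge1$.

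The main obstacle is the bookkeeping in Step 2. One has to choose the correct integral normalization of $r^m-A_+^n$ for each sign of $m$ and $n$, and verify that every exponential factor is $e^{O(\abs dA^2)}$ with uniform constants. This needs the bounds $\log t_u,\log t_v\ll A$, which come from \eqref{eq:t_u_def} and \eqref{eq:equivalence_of_norms}.
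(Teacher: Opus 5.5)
Your proposal is correct and follows the paper's proof essentially step by step: you reduce the determinant via \eqref{eq:determinant_calculation} to $2\abs{\log(r^{d(v)}A_+^{-d(u)})}$, get nonvanishing by taking field norms, and obtain the lower bound by clearing denominators with powers of $t_u^2$ or $\Delta$. This turns the quantity into a nonzero element of $\Ocal_K$ whose norm is at least $1$ and whose conjugate is $e^{O(\abs{d}A^2)}$. The only differences are cosmetic. You normalize $r^m-A_+^n$ rather than $\beta-1=r^mA_+^{-n}-1$, so you need the extra bound $A_+\ge t_v^{-1}$ when dividing back. Also, $\abs{m\log r-n\log A_+}$ equals $\abs{\log(r^mA_+^{-n})}$, not half of it, though this only affects constants.
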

\begin{proof}
    Since $\abs{\det(u,v)}=1$, equation \eqref{eq:determinant_calculation} gives
    \begin{align}\abs{\det(\lambda_{u\vert v},d)}&= \abs{\lambda_{u\vert v}(u)d(v)-\lambda_{u\vert v}(v)d(u)}\\&=\abs{2\log(r) d(v)-2\log (A_+) d(u)}=2\abs{\log (r^{d(v)}A_+^{-d(u)})}.\label{eq:determinant_calculation_2}
    \end{align}

If $\abs{\det(\lambda_{u\vert v},d)}=0$, then $r^{d(v)}=A_+^{d(u)}$. Since $d$ is integral, both exponents are integers. Taking field norms and using Proposition~\ref{prop:elementary-prop}\eqref{prop:nt_ii} gives \begin{equation}1=\left(\frac{t_u^2}{t_u^2-4}\right)^{d(u)}.\end{equation} Hence $d(u)=0$, and then $r^{d(v)}=A_+^{d(u)}=1$. Since $r>1$, we also have $d(v)=0$. As $u,v$ form a basis, this gives $d=0$, a contradiction. In particular we conclude that
\begin{equation}\label{eq:not_a_one}
    r^{d(v)}A_+^{-d(u)}\neq 1.
\end{equation}

By \eqref{eq:t_u_def} and norm equivalence on $\RR^2$, we have
\begin{equation}\label{eq:bound1}
    \log t_u, \log t_v\ll A, \quad \abs{d(u)}, \abs{d(v)}\ll \abs{d}A.
\end{equation}
The implied constant depends only on $X$. Proposition~\ref{prop:elementary-prop}\eqref{prop:nt_ii} and \eqref{prop:nt_iii}, together with $A_\pm>0$, give
\begin{equation}\label{eq:bound2}
\tfrac{1}{t_v}<\tfrac{1}{A_\mp}<\tfrac{A_+A_-}{A_\mp}=A_\pm<t_v. 
\end{equation} Also note that $r<t_u$.

Let \begin{equation}n_+=\max(0,d(u)),\quad  n_-=\max(0,-d(u)), \quad \text{ and }D=t_u^{2n_+}\Delta^{n_-}.\end{equation} Let $\beta=r^{d(v)}A_+^{-d(u)}$. Then $\beta>0$, and \begin{equation}
D\beta=r^{d(v)}(\Delta A_-)^{n_+}(\Delta A_+)^{n_-}\end{equation} so by Proposition \ref{prop:elementary-prop} we have $D\beta\in \Ocal_K$, and as a consequence $D(\beta-1)\in \Ocal_K$. Moreover, $\sigma(\beta)=r^{-d(v)}A_-^{-d(u)}$. Using $r<t_u$ together with
\eqref{eq:bound1} and \eqref{eq:bound2}, we have
\begin{equation}\log D\ll \abs{d}A^2, \text{ and } \log\max(1, \abs{\sigma(\beta)})\ll \abs{d}A^2\end{equation} 
By \eqref{eq:not_a_one}, $D(\beta-1)$ is nonzero. Since it belongs to
$\Ocal_K$, its norm $\norm_{K/\QQ}(D(\beta-1))\in\ZZ$ is a nonzero integer. Therefore
\begin{equation}1\leq D^2\abs{\beta-1}\abs{\sigma(\beta)-1}\leq D^2\abs{\beta-1}\abs{\sigma(\beta)+1}.
\end{equation}
This gives 
\begin{equation}
    \abs{\beta-1}\geq \frac{1}{D^2\abs{\sigma(\beta)+1}}\gg e^{-C\abs{d}A^2} 
\end{equation} for some $C$ depending only on $X$.
By \eqref{eq:determinant_calculation_2} and the definition of $\beta$, we have
\begin{equation}\abs{\det(\lambda_{u\vert v},d)}=2\abs{\log\beta}.
\end{equation}
If $\abs{\log\beta}\geq1/2$, then the determinant is at least $1$, as required. If $\abs{\log\beta}\leq1/2$, the mean value theorem gives
$\abs{e^{\log\beta}-1}\leq e^{1/2}\abs{\log\beta}$, so
\begin{equation}
\abs{\det(\lambda_{u\vert v},d)}\geq 2e^{-1/2}\abs{\beta-1}\gg e^{-C\abs{d}A^2}.
\end{equation}
Since $\abs{d}A^2\geq1$, enlarging $M_1$ absorbs the implied constant and proves the claim.
\end{proof}
Recall the notation from Section \ref{sec:support_and_normal_turn}: $\Gamma_{H,j}$ is a connected component of $\partial B_X\backslash C_H$, and $s_{H,j}$ is its Euclidean length. The arc $J_{H,j}\subseteq S^1$ consists of the directions of supporting normals along $\Gamma_{H,j}$. For a small parameter $\eta>0$ we let $J_{H,j}^\eta$ denote the $\eta$-neighborhood of $J_{H,j}$ in $S^1$. It follows from Theorem \ref{thm:DoanLiNguyen} \eqref{thm:DoanLiNguyen_uno} that for $H\geq 1$ 
\begin{equation}
  \abs{J_{H,j}^\eta}\leq M e^{-\kappa H}+2\eta
\end{equation}

Fix $\varepsilon>0$ and assume that the parameters $H$ and $\eta$ are chosen such that \begin{equation}\label{eq:basic_assumption_1}m e^{-\Kappa H}\geq (2+\varepsilon)\eta\end{equation} Then, by Theorem \ref{thm:DoanLiNguyen} \eqref{thm:DoanLiNguyen_duo}, the arcs $J_{H,j}^\eta$ are disjoint, with gaps of at least $\varepsilon\eta$.

An integral covector $d\in (\ZZ^2)^*$ is called primitive if its dual vector $d^\#$ has coprime coordinates i.e. $d^\#\in \ZZ^2_{\mathrm{prim}}$. The covector $d$ is called \emph{unresolved} (with parameters $K$, $H$, $\eta$)  if 
\begin{equation}
  \frac{d^\#}{\abs{d^\#}}\in \cup_j J_{H,j}^\eta \text{ and } \abs{d}\leq K.
\end{equation} 
Since the arcs $J_{H,j}^\eta$ are disjoint, an unresolved covector $d$ satisfies $\tfrac{d^\#}{\abs{d^\#}}\in J_{H,j(d)}^\eta$ for a unique $j(d)$. 

\begin{lemma} \label{lem:unresolved_sizes} 
  Let $X$ be the modular punctured torus and $H\ge1$. Assume \eqref{eq:basic_assumption_1} and that $d$ is a nonzero integral unresolved covector with parameters $K$, $H$, $\eta$. If $A$ is the smaller of the two endpoint heights of $\Gamma_{H,j(d)}$, then
  \begin{equation}
    \abs{d} A^2\gg H, \quad s_{H,j(d)}\ll H^{-\tfrac 3 2 }\abs{d}^{\tfrac1 2}
  \end{equation}
\end{lemma}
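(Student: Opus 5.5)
Let $\Gamma=\Gamma_{H,j(d)}$ have endpoints $u/\MRnorm{u}$ and $w/\MRnorm{w}$ in $C_H$, with $u,w\in\ZZ^2_{\mathrm{prim}}$ consecutive among primitive vectors of height at most $H$. As in the proof of Lemma~\ref{lem:arc_length}, after a signed coordinate permutation these are consecutive Farey fractions of order $H$. Hence $\abs{\det(u,w)}=1$, $\infnorm{u},\infnorm{w}\le H$, and $\infnorm{u}+\infnorm{w}>H$. Label them so that $\infnorm{u}=A$ is the smaller height; then $\infnorm{w}\le H$, and $\infnorm{w}>H/2\ge A/2$.

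The plan is to compare the direction of $d^\#$ with the direction of a single supporting normal at the endpoint $u/\MRnorm{u}$, namely $\lambda=\lambda_{u\vert w}$ or $\lambda_{u\vert -w}$, whichever is the one-sided limit of normals along $\Gamma$. Since $n(\lambda)$ lies in the closure of $J_{H,j(d)}$, and $d^\#/\abs{d^\#}\in J^\eta_{H,j(d)}$, the angle between $d^\#$ and $\lambda^\#$ is at most $\abs{J_{H,j(d)}}+\eta\le Me^{-\kappa H}+\eta$. Together with $\abs{\lambda}\ll1$ (supporting functionals satisfy $\abs{\lambda(y)}\le\MRnorm{y}\le C\abs{y}$), this gives
\begin{equation}
\abs{\det(\lambda,d)}\ll \abs{d}\bigl(e^{-\kappa H}+\eta\bigr)\ll \abs{d}e^{-\kappa' H},
\end{equation}
with $\kappa'=\min(\kappa,\Kappa)$, using \eqref{eq:basic_assumption_1} to get $\eta\ll e^{-\Kappa H}$.

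The key step is to apply Lemma~\ref{lem:support_separation} with $v=\pm w$ to obtain the lower bound $\abs{\det(\lambda,d)}\ge e^{-M_1\abs{d}A^2}$. The hypothesis $\infnorm{v}\le2A$ can fail, since $\infnorm{w}$ may be close to $H$ while $A$ is small. This is the main obstacle. The first thing I would try is to replace $w$ by the Farey neighbour of $u$ obtained by reducing $w$ modulo $u$: choose $v=\pm(w-ku)$ with $\infnorm{v}\le 2A$ (for instance, the Farey parent with $\infnorm{v}\le A$, or $v=w-ku$). Then I would check that $\lambda_{u\vert v}$ selects the same one-sided supporting functional at $u$ as $\lambda_{u\vert \pm w}$. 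This holds because the selected functional depends only on the side of the line $\RR u$ on which $v$ lies, and $w-ku$ lies on the same side as $w$ since $\det(u,w-ku)=\det(u,w)$. With this replacement, combining the two bounds gives $e^{-M_1\abs{d}A^2}\ll\abs{d}e^{-\kappa'H}$. Taking logarithms and using $\log\abs{d}\le\abs{d}A^2$ yields $\abs{d}A^2\gg H$, after absorbing constants for large $H$; bounded $H$ is trivial since $\abs{d}A^2\ge1$.

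The second estimate then follows from Lemma~\ref{lem:arc_length}: $s_{H,j(d)}\ll (HA)^{-1}$. Since $A\gg (H/\abs{d})^{1/2}$, we get $s_{H,j(d)}\ll H^{-1}\cdot H^{-1/2}\abs{d}^{1/2}=H^{-3/2}\abs{d}^{1/2}$.
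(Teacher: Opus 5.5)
Your proposal is correct and follows essentially the same route as the paper. The paper also replaces $w$ by $v=w-ku$ with $k=\lfloor p'/p\rfloor$, so that $\infnorm{v}\le A$. It notes $\lambda_{u\vert w}=\lambda_{u\vert v}$ because $\mu(v)=\mu(w)-k\MRnorm{u}$ for every supporting $\mu$ at $u/\MRnorm{u}$. It then compares the lower bound from Lemma~\ref{lem:support_separation} with the upper bound $\abs{\det(\lambda_{u\vert v},d)}\ll\abs{d}e^{-cH}$, and finishes with Lemma~\ref{lem:arc_length}.

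The one step you only assert is $n(\lambda_{u\vert w})\in\overline{J_{H,j(d)}}$. The paper proves it by showing that supporting functionals at $(u+tw)/\MRnorm{u+tw}\in\Gamma_{H,j(d)}$ converge to $\lambda_{u\vert w}$ as $t\to0^+$. That argument also settles your $\pm w$ ambiguity in favour of $\lambda_{u\vert w}$.
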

\begin{proof}
Make the same coordinate-sector reduction as in the proof of  Lemma~\ref{lem:arc_length}.
Put $j=j(d)$, and let $u,w$ be the primitive representatives of the
endpoints of $\Gamma_{H,j}$, with $\infnorm{u}=A\le\infnorm{w}$.
For a signed coordinate permutation $T$ carrying their sector to $\mathcal C$,
write
\begin{equation}
 \begin{gathered}
 Tu=(p,q),\qquad Tw=(p',q'),\\
 0\le q\le p,\quad 0\le q'\le p',\quad 1\le p=A\le p'\le H.
 \end{gathered}
\end{equation}
For $z=(a,b)\in \ZZ^2$ with $a\neq 0$ we call $b/a$ its slope. It is the slope of the line between $0$ and $z$.  
The slopes $q/p$ and $q'/p'$ are consecutive Farey fractions, so $\abs{\det(u,w)}=1$.

To replace $w$ by a vector $v$ such that $u,v$ satisfy the hypotheses of Lemma~\ref{lem:support_separation}, set
\begin{equation}
 k=\left\lfloor\frac{p'}p\right\rfloor,\qquad v=w-ku.
\end{equation}
Write $Tv=(p_0,q_0)$. This vector need not lie in $\mathcal C$; the choice of $k$ gives $0\le p_0\le p-1$, and $\abs{pq_0-qp_0}=1$. Hence
\begin{equation}
 \abs{q_0}\le\frac{qp_0+1}{p}\le p_0+\frac1p\le p.
\end{equation}
Thus $v$ is primitive, $\abs{\det(u,v)}=1$, and $\infnorm{v}\le A$, so we can apply Lemma~\ref{lem:support_separation}. 
Every supporting functional $\mu$ at $u/\MRnorm{u}$ satisfies
$\mu(v)=\mu(w)-k\MRnorm{u}$. Since the subtracted term does not depend on $\mu$, the maximising functional is unchanged when $w$ is replaced by $v$, i.e. 
\begin{equation}\lambda_{u\vert w}=\lambda_{u\vert v}.\end{equation}
Applying Lemma~\ref{lem:support_separation}
to $u,v$ gives the lower bound
\begin{equation}\label{eq:unresolved_det_lower}
 \abs{\det(\lambda_{u\vert v},d)}\ge e^{-M_1\abs{d}A^2}.
\end{equation}

For an upper bound, we first show that $n(\lambda_{u\vert v})\in\overline{J_{H,j}}$.
For $t>0$, the slope 
\begin{equation}\frac{q+tq'}{p+tp'}=\left(\frac{p}{p+tp'}\right)\frac{q}{p}+\left(\frac{tp'}{p+tp'}\right)\frac{q'}{p'}\end{equation} of $T(u+tw)$ lies strictly between the consecutive endpoint slopes $q/p$ and $q'/p'$.
Thus $(u+tw)/\MRnorm{u+tw}$ lies on $\Gamma_{H,j}$.
Any supporting functional $\lambda_t$ at $(u+tw)/\MRnorm{u+tw}$ satisfies
\begin{equation}
 \lambda_t(w)
 =\frac{\MRnorm{u+tw}-\lambda_t(u)}{t}
 \ge\frac{\MRnorm{u+tw}-\MRnorm{u}}{t}
 \ge\lambda_{u\vert v}(w).
\end{equation}
Supporting functionals are uniformly bounded by \eqref{eq:equivalence_of_norms}.
Since $\lambda_t(u+tw)=\MRnorm{u+tw}$ and $\lambda_t(w)$ is bounded,
$\lambda_t(u)\to\MRnorm{u}$. Thus every subsequential limit of $\lambda_t$ as $t\to0^+$ supports at
$u/\MRnorm{u}$ and, by the displayed inequality, maximizes the value on $w$.
By uniqueness it therefore equals $\lambda_{u\vert w}=\lambda_{u\vert v}$.
Thus $\lambda_t\to\lambda_{u\vert v}$ and $n(\lambda_{u\vert v})\in\overline{J_{H,j}}$.

Since $d$ is unresolved, Theorem~\ref{thm:DoanLiNguyen} and
\eqref{eq:basic_assumption_1} now give that the angular distance between $n(\lambda_{u\vert v})$ and $d^\#/|d|$ is at most
\begin{equation}
\abs{J_{H,j}}+\eta\ll e^{-cH},
\end{equation}
where $c=\min(\kappa,\Kappa)>0$, since
$\eta\le\frac{m}{2+\varepsilon}e^{-\Kappa H}\le\frac m2e^{-\Kappa H}$.
The determinant is bounded by the product of the lengths and the angular
distance. Since $\abs{\lambda_{u\vert v}}\ll1$, it follows that
\begin{equation}
 \abs{\det(\lambda_{u\vert v},d)}\le L\abs{d}e^{-cH}
\end{equation}
for some $L\ge1$ depending only on $X$.
Comparing with \eqref{eq:unresolved_det_lower} and taking logarithms yields
\begin{equation}
 cH\le M_1\abs{d}A^2+\log\bigl(L\abs{d}\bigr)
 \ll\abs{d}A^2,
\end{equation}
because $A,\abs{d}\ge1$ and $\log\abs{d}\le\abs{d}$.
This proves the first assertion. Finally, $A\gg(H/\abs{d})^{1/2}$
and Lemma~\ref{lem:arc_length} give
\begin{equation}
 s_{H,j}
 \ll\frac{1}{HA}
 \ll H^{-3/2}\abs{d}^{1/2}.
\end{equation}
All implied constants depend only on $X$.
\end{proof}
Let $1\leq T\leq K$, and let 
$M_H(T)$ be the number of unresolved primitive covectors with $\abs{d}\leq T$.
\begin{lemma}\label{lemma:counting-unresolved}
Let $X$ be any complete finite-area hyperbolic once-punctured torus.
Assume \eqref{eq:basic_assumption_1} and
\begin{equation}\label{eq:basic_assumption_2}
H\ge D_X\log(2K),
\end{equation}
where $D_X$ is a sufficiently large constant depending only on $X$. Then
  \begin{equation}M_H(T)\ll \log^2(2T)\end{equation}
where the implied constant depends only on $X$.
\end{lemma}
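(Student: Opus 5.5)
The plan is to show that each arc can contain at most one short primitive covector, and then to count arcs at a coarser height adapted to $T$ rather than to $K$. The only inputs are Theorem~\ref{thm:DoanLiNguyen}, assumption \eqref{eq:basic_assumption_1}, and the count in Lemma~\ref{lem:arc_length}. No arithmetic of $X$ is needed.

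\textbf{Step 1: one short primitive covector per arc.} Let $d\neq d'$ be primitive integral covectors with $\abs{d},\abs{d'}\le T$, and suppose both directions lie in a common arc $J$ of angular length $\theta$. The duals $d^\#,d'^\#$ are distinct primitive vectors. Their directions lie in a short arc, so they are not negatives of each other, and hence they are linearly independent. Therefore
\begin{equation}
1\le\abs{\det(d,d')}\le \abs{d}\abs{d'}\sin\theta\le T^2\theta .
\end{equation}
So as soon as $\theta<T^{-2}$, each such arc contains the direction of at most one primitive covector of length at most $T$.

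\textbf{Step 2: choose a coarser height $H'$ adapted to $T$.} Set $H'=D_X\log(2T)$. Since $T\le K$, assumption \eqref{eq:basic_assumption_2} gives $H'\le H$, and we may assume $H'\ge1$. Because $C_{H'}\subseteq C_H$, every component $\Gamma_{H,j}$ is contained in some component $\Gamma_{H',j'}$. Hence $J_{H,j}\subseteq J_{H',j'}$, and also $J^\eta_{H,j}\subseteq J^\eta_{H',j'}$.

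By Theorem~\ref{thm:DoanLiNguyen}\eqref{thm:DoanLiNguyen_uno} and \eqref{eq:basic_assumption_1}, we have $\eta\le \tfrac m2 e^{-\Kappa H}\le \tfrac m2e^{-\Kappa H'}$. It follows that
\begin{equation}
\abs{J^\eta_{H',j'}}\le Me^{-\kappa H'}+m e^{-\Kappa H'}\ll (2T)^{-cD_X},\qquad c=\min(\kappa,\Kappa).
\end{equation}
Taking $D_X$ large in terms of $c,M,m$ makes this less than $T^{-2}$ for all $T\ge1$.

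\textbf{Step 3: count.} Every unresolved primitive $d$ with $\abs{d}\le T$ has direction in some $J^\eta_{H,j}$, and hence in some $J^\eta_{H',j'}$. By Steps 1 and 2, each $J^\eta_{H',j'}$ contains at most one such $d$. So $M_H(T)$ is at most the number of arcs at level $H'$. By Lemma~\ref{lem:arc_length}, that number is $\#C_{H'}=O(H'^2)=O(\log^2(2T))$.

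\textbf{Main obstacle.} A direct count at level $H$ only gives $O(H^2)$, which can be much larger than $\log^2(2T)$. The substantive step is therefore the passage to the coarser level $H'$. The points to check are the monotonicity $J^\eta_{H,j}\subseteq J^\eta_{H',j'}$, and that the fixed $\eta$, which is tied to $H$, is still small at level $H'$. Both follow because $H'\le H$ and $\eta$ decays like $e^{-\Kappa H}$.
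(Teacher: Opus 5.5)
Your proposal is correct and matches the paper's proof step for step. The paper also passes to the coarser level $h=D_X\log(2T)\le H$ and uses Theorem~\ref{thm:DoanLiNguyen}(i) with \eqref{eq:basic_assumption_1} to get $\abs{J_{h,i}^\eta}\le (M+m)(2T)^{-cD_X}<T^{-2}$. It then shows by the same determinant argument that each such arc contains at most one primitive covector of length at most $T$, and concludes $M_H(T)\le\#C_h\ll\log^2(2T)$ by Lemma~\ref{lem:arc_length}. (One small attribution slip: the bound $\eta\le\tfrac m2e^{-\Kappa H}$ comes from \eqref{eq:basic_assumption_1} alone, while part (i) of the theorem is what bounds $\abs{J_{H',j'}}$.)
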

\begin{proof}
Put $h=D_X\log(2T)$, choosing $D_X$ so that $D_X\log2\ge1$; then $1\le h\le H$.
Recall the definition \eqref{eq:corners_definition} of $C_H$. Since $C_h\subseteq C_H$, each $\Gamma_{H,j}$ is contained in some
$\Gamma_{h,i}$, and consequently $J_{H,j}^\eta\subseteq J_{h,i}^\eta$.
Thus every unresolved direction belongs to some $J_{h,i}^\eta$.
Put $c=\min(\kappa,\Kappa)>0$. Theorem~\ref{thm:DoanLiNguyen} and
\eqref{eq:basic_assumption_1}, together with $h\le H$, give
\begin{equation}
 \abs{J_{h,i}^\eta}
 \le M e^{-\kappa h}+m e^{-\Kappa h}
 \le (M+m)(2T)^{-cD_X}
 <T^{-2}.
\end{equation}
Here we used $2\eta\le m e^{-\Kappa h}$; the final inequality holds for $D_X$
sufficiently large depending only on $X$, uniformly in $\varepsilon$.

Suppose one such arc contains the directions of two distinct primitive integral
covectors $d,e$ with $\abs{d},\abs{e}\le T$.
Let $\alpha\in[0,\pi]$ be the angle between their dual unit vectors
$d^\#/\abs{d}$ and $e^\#/\abs{e}$.
Primitivity excludes identical directions, while
$\alpha<T^{-2}\le1<\pi$ excludes opposite directions.
Thus $d,e$ are nonparallel, so
\begin{equation}
 1\le\abs{\det(d,e)}
 \le\abs{d}\abs{e}\alpha<T\cdot T\cdot T^{-2}=1,
\end{equation}
a contradiction. Hence each arc contains at most one such direction.
There are $\#C_h\ll h^2$ arcs by Lemma~\ref{lem:arc_length}, so
\begin{equation}
 M_H(T)\le\#C_h\ll h^2\ll\log^2(2T).\qedhere
\end{equation}
\end{proof}
For the modular torus, under these assumptions, summation by parts together with
Lemmas~\ref{lem:unresolved_sizes} and~\ref{lemma:counting-unresolved} gives
\begin{equation}\label{eq:unresolved_bound}
  \sum_{\substack{d \text{ primitive and unresolved}\\ \abs{d}\leq K}}\frac{s_{H,j(d)}}{\abs{d}}\ll H^{-3/2}.
\end{equation}

\section{Convex caps}
We define the support function of $B_X$ by
\begin{equation}\label{eq:support_function}
\begin{array}{@{}r@{\;}l@{\;}c@{\;}c@{}}
h_{B_X}: & S^1 & \to & \RR_+ \\
     & \theta & \to & \displaystyle\max_{z\in B_X}z\cdot \theta
\end{array}
\end{equation}
This gives the perpendicular distance from the origin to the supporting line of $B_X$ with outward unit normal $\theta$. Note also that it gives the positive endpoint  $h_{B_X}(\theta)\theta$ of the projection of $B_X$ to the line spanned by $\theta$.  The line $z\cdot \theta=h_{B_X}(\theta)$ is a supporting line at some $w\in \partial B_X$ i.e. $z\cdot \theta\leq h_{B_X}(\theta)$ for $z\in B_X$ with equality at $w$. For the basics about the support function consult \cite[Sec 1]{Schneider:1998}.

\noindent
\begin{minipage}[t]{.49\linewidth}
\vspace{0pt}
Define the convex cap of width $\delta>0$ in direction $\theta\in S^1$ by 
\begin{equation}
  C(\theta, \delta):=\{z\in B_X:z\cdot \theta \geq h_{B_X}(\theta)-\delta\}
\end{equation}
and its diameter by
\begin{equation}
\diam(C(\theta,\delta))
\end{equation} i.e. the maximal distance between two points in $C(\theta,\delta)$.
\end{minipage}\hfill
\begin{minipage}[t]{.48\linewidth}
\vspace{0pt}
\centering
\makeatletter\def\@captype{figure}\makeatother
\scalebox{1.05}{%
\begin{tikzpicture}[x=2.5075cm,y=2.5075cm]
  \def\CapDelta{0.1}
  \begin{scope}
    \clip (-.7,-.1) rectangle (.7,.75);
    \filldraw[fill=FigureBlue!18,draw=FigureBlue,line width=.55pt]
      \MRBallBoundary;
  \end{scope}
  \coordinate (CapTop) at (MRball-1-2);
  \path (CapTop) ++(0,-\CapDelta) coordinate (CapCut);
  \begin{scope}
    \clip (-.7,0 |- CapCut) rectangle (.7,.75);
    \filldraw[fill=orange!55,draw=FigureBlue,line width=.55pt]
      \MRBallBoundary;
  \end{scope}
  \draw[thin,black!55,-{Stealth[length=3pt]}] (-.69,0) -- (.88,0)
    node[below right,font=\scriptsize] {$x$};
  \draw[thin,black!55,-{Stealth[length=3pt]}] (0,-.1) -- (0,1.12)
    node[above left,font=\scriptsize] {$y$};
  \draw[thin,black!55] (-.02,1) -- (0,1)
    node[pos=0,left=2pt,font=\scriptsize] {$1$};
  \fill[black] (0,0) circle[radius=.7pt];
  \node[below left=2pt,font=\scriptsize] at (0,0) {$0$};
  \node[font=\scriptsize] at (.28,.18) {$B_X$};
  \draw[black,line width=.7pt]
    (-.7,0 |- CapTop) -- (.88,0 |- CapTop);
  \node[anchor=south west,inner sep=1pt,yshift=1.5pt,font=\scriptsize,text=black]
    at (-1.23,0 |- CapTop) {$z\cdot\theta=h_{B_X}(\theta)$};
  \draw[black,line width=.7pt]
    (-.7,0 |- CapCut) -- (.88,0 |- CapCut);
  \node[anchor=north west,inner sep=1pt,yshift=-1.5pt,font=\scriptsize,text=black]
    at (-1.23,0 |- CapCut) {$z\cdot\theta=h_{B_X}(\theta)-\delta$};
  \draw[black,{Stealth[length=2pt]}-{Stealth[length=2pt]},line width=.6pt]
    (.73,0 |- CapCut) -- (.73,0 |- CapTop)
    node[midway,right=4pt,fill=white,inner sep=1pt,font=\scriptsize]
      {$\delta$};
  \draw[FigureBlue,-{Stealth[length=3pt]},line width=.8pt]
    (0,0) -- (0,1)
    node[pos=.85,right=3pt,font=\scriptsize,text=black] {$\theta$};
  \fill[FigureBlue] (CapTop) circle[radius=1pt];
  \node[above =1.5pt,font=\scriptsize,text=black] at (CapTop) {$w$};
  \draw[thin,black!55] (CapTop) ++(0,{-\CapDelta/2}) -- (.45,.73)
    node[above,font=\scriptsize,text=black] {$C(\theta,\delta)$};
\end{tikzpicture}%
}
{\small\caption{$C(\theta,\delta)$ of $B_X$, 
$\theta=(0,1)$.}}
\label{fig:convex-cap-vertical}
\end{minipage}

\begin{lemma}\label{lem:cap_diameter}
Let $H\ge1$, and assume \eqref{eq:basic_assumption_1}.
Then for every $\delta>0$ and $\theta\in S^1$,
\begin{equation}
\diam(C(\theta,\delta))\ll
\begin{cases}
 \delta/\eta & \text{if } \theta\notin\bigcup_j J_{H,j}^\eta,\\
 s_{H,j}+\delta/\eta & \text{if } \theta\in J_{H,j}^\eta.
\end{cases}
\end{equation}
\end{lemma}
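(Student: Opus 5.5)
The plan is to control the cap by how far supporting normals must turn before the boundary leaves the cap. Let $w\in\partial B_X$ be a point where the supporting line with outward normal $\theta$ touches $B_X$. Let $z\in C(\theta,\delta)$ lie on $\partial B_X$; points in the interior of the cap can be dealt with by replacing them with boundary points of the cap on the same chord. Walk along $\partial B_X$ from $w$ to $z$ in one of the two directions. Consider the set of boundary points $y$ on this walk whose supporting normals all lie within angular distance $\eta$ of $\theta$. Since $\theta$ is a support direction, the cap $C(\theta,\delta)$ is a connected arc of the boundary around the face $F_\theta$ touching the line, together with the region under it. It therefore suffices to bound the Euclidean length of the two boundary arcs leaving $F_\theta$ until the height $h_{B_X}(\theta)-z\cdot\theta$ reaches $\delta$.

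\textbf{Key geometric estimate.} Suppose we move along the boundary from a point $y_0$ whose normals make an angle $\ge\eta$ with $\theta$ (on the relevant side). Then the height function $y\mapsto h_{B_X}(\theta)-y\cdot\theta$ increases at rate at least $\sin\eta\gg\eta$ per unit of arc length from $y_0$ onward. This holds because the tangent direction along the boundary makes an angle at least $\eta$ with the supporting line, and convexity makes the normal angle monotone. Hence the arc beyond the first point where the normal deviates by $\eta$ has length $\ll\delta/\eta$; here we use that the boundary has bounded curvature-free geometry, so angles are bounded away from $\pi/2$ by \eqref{eq:equivalence_of_norms}, and arc length is comparable to chord length. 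This yields $\diam C(\theta,\delta)\ll L_\eta(\theta)+\delta/\eta$, where $L_\eta(\theta)$ is the Euclidean length of the set of boundary points having some supporting normal within angle $\eta$ of $\theta$ (including $F_\theta$).

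\textbf{Bounding $L_\eta(\theta)$.} This part uses \eqref{eq:basic_assumption_1}. If $\theta\notin\bigcup_j J^\eta_{H,j}$, then every normal within $\eta$ of $\theta$ is a normal at a corner in $C_H$. Indeed, the normal directions not in $\bigcup_j J_{H,j}$ are exactly those at corners of $C_H$, and the closed $\eta$-ball around $\theta$ misses every $J_{H,j}$. The $\eta$-ball is connected and the corners are isolated in the boundary ordering. Consequently all such normals belong to a single corner $w\in C_H$, and $L_\eta(\theta)=0$ (the face $F_\theta$ is the single point $w$ by strict convexity). If $\theta\in J^\eta_{H,j}$, then because the arcs $J^\eta_{H,j}$ are disjoint with gaps, the normals within $\eta$ of $\theta$ lie in $J^{2\eta}$-type neighbourhoods. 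More carefully, they lie in $J_{H,j}$ together with its two endpoint corners, and possibly parts of the adjacent corner normal cones, which contribute zero length. Hence the set is contained in $\overline{\Gamma_{H,j}}$, and $L_\eta(\theta)\le s_{H,j}$.

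\textbf{Main obstacle.} The delicate step is the second case. One must show that an $\eta$-neighbourhood of $\theta$ cannot reach past an endpoint corner into the next arc $J_{H,j\pm1}$. This follows from the corner turn bound of Theorem~\ref{thm:DoanLiNguyen}\eqref{thm:DoanLiNguyen_duo}: the corner's normal cone has width $\ge me^{-\Kappa H}\ge(2+\varepsilon)\eta$, so a window of width $2\eta$ around $\theta\in J^\eta_{H,j}$ cannot cross it. The same bound should also make the first case rigorous. A second point to handle is the uniformity of the "rate $\gg\eta$" claim. For this I would use $\sin\eta\ge 2\eta/\pi$ and the Lipschitz parametrization from the proof of Lemma~\ref{lem:arc_length}, which converts the height gain into a Euclidean length bound with constants depending only on $X$.
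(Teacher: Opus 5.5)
Your argument is correct in substance, and its localization step is the paper's. A window of normals of angular length $2\eta$ around $\theta$ cannot contain a whole corner cone of $C_H$, since each has turn at least $m e^{-\Kappa H}\ge(2+\varepsilon)\eta$. So the boundary points with normals in that window form a single corner (first case) or lie in $\overline{\Gamma_{H,j}}$ (second case).

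Where you differ is in producing the $\delta/\eta$ term. The paper takes the supporting points $a_\pm$ with normals $n_\pm=\cos\eta\,\theta\pm\sin\eta\,e$. Every $z$ in the cap satisfies $(z-a_\pm)\cdot n_\pm\le 0$ and $(z-a_\pm)\cdot\theta\ge-\delta$, which rearranges to $a_-\cdot e-\delta\cot\eta\le z\cdot e\le a_+\cdot e+\delta\cot\eta$. This is two lines of linear algebra, holds for every $\delta>0$, and only involves the chord $\abs{a_+-a_-}$. You instead parametrize $\partial B_X$ by arc length and integrate the slope $\sin\phi$ of the height function along each tail. That yields the slightly stronger conclusion that the whole boundary arc inside the cap has length $\ll L_\eta(\theta)+\delta/\eta$. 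It also needs more care than you give it.

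The rate is $\sin\phi$, so it is at least $\sin\eta$ only while $\phi\le\pi-\eta$, not ``from $y_0$ onward''. You must therefore handle the part of a tail where the normal comes within $\eta$ of $-\theta$:
\begin{itemize}
\item for $\delta\gg1$ the trivial bound $\diam{B_X}\ll1\ll\delta/\eta$ suffices;
\item for small $\delta$ (and $\eta$), the inner and outer Euclidean discs about $0$ given by \eqref{eq:equivalence_of_norms} keep such points out of the cap. This, rather than keeping angles away from $\pi/2$, is what that estimate is needed for.
\end{itemize}
The cleanest repair is simply to bound the tails by the half-plane containment above.

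Two smaller points. First, $\sin\eta\gg\eta$ requires $\eta$ bounded. This follows from \eqref{eq:basic_assumption_1} as in the paper: there are at least eight corners and the total turn is $2\pi$, so $\eta<\pi/8$. Second, the Lipschitz parametrization from Lemma~\ref{lem:arc_length} is unnecessary, since arc length along $\partial B_X$ already dominates Euclidean distance.
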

\begin{proof}
Since $C_H$ contains at least eight corners and the total turn is $2\pi$,
Theorem~\ref{thm:DoanLiNguyen} and \eqref{eq:basic_assumption_1} give
\begin{equation}
 (2+\varepsilon)\eta\le m e^{-\Kappa H}\le\frac{2\pi}{8},
\end{equation}
so $0<\eta<\pi/8$.
Let $e\perp\theta$ be a unit vector, and let $a_\pm\in\partial B_X$ be the
unique supporting points with outward unit normals
\begin{equation}
 n_\pm=\cos\eta\,\theta\pm\sin\eta\,e.
\end{equation}
As discussed after \eqref{eq:basic_assumption_1},
Theorem~\ref{thm:DoanLiNguyen}(\ref{thm:DoanLiNguyen_duo}) gives normal gaps
between consecutive $J_{H,j}$ of length at least $m e^{-\Kappa H}\ge(2+\varepsilon)\eta>2\eta$.
The interval of normals between $n_-$ and $n_+$ has angular length $2\eta$ and therefore meets at most one such arc.
If $\theta\notin\bigcup_j J_{H,j}^\eta$, both supporting points are the same corner,
so $a_+=a_-$.
If $\theta\in J_{H,j}^\eta$, both points belong to $\overline{\Gamma_{H,j}}$,
so $\abs{a_+-a_-}\le s_{H,j}$.

For any $z\in C(\theta,\delta)$, the supporting inequalities and the cap definition give
\begin{equation}
 (z-a_\pm)\cdot n_\pm\le0,
 \qquad (z-a_\pm)\cdot\theta\ge-\delta.
\end{equation}
Rearranging yields
\begin{equation}
 a_-\cdot e-\delta\cot\eta
 \le z\cdot e
 \le a_+\cdot e+\delta\cot\eta.
\end{equation}
Thus the cap has width at most $\abs{a_+-a_-}+2\delta\cot\eta$ in direction $e$,
and at most $\delta$ in direction $\theta$. Therefore
\begin{equation}
 \diam(C(\theta,\delta))
 \le\abs{a_+-a_-}+2\delta\cot\eta+\delta
 \ll\abs{a_+-a_-}+\frac{\delta}{\eta}.
\end{equation}
Substituting the two bounds for $\abs{a_+-a_-}$ proves the claim.
\end{proof}

\section{Smoothing and Poisson summation} 
We are now ready to start proving Theorem \ref{thm:main}. The first step is to find an expansion of $N_X(R)$ with an error estimate which is better than the Jarn\'{\i}k-Steinhaus error term $O(R)$, see \eqref{eq:convex_bound_JS}. To do so we use the Poisson summation formula (see \cite[Sec. 4.3]{IwaniecKowalski:2004}). As usual it is convenient to introduce a smooth approximation of the characteristic function of the McShane-Rivin ball. 

 Let $\rho:\RR^2\to \RR$ be a non-negative smooth radially symmetric function supported in the Euclidean unit ball, normalized to have total mass 1. For $Q\geq 1$ consider $\rho_Q(x)=Q^2\rho(Q x)$, which is supported in the Euclidean ball of radius $1/Q$ and has total mass 1. 
For sufficiently nice functions $f$ and $g$ on $\RR^2$ we define their convolution 
\begin{equation}f*g(x)=\int_{\RR^2}f(y)g(x-y)dy.\end{equation}  

Then for every $S>0$ and $Q\geq 1$ the function $1_{SB_X}*\rho_Q$ is smooth: It  follows from \eqref{eq:equivalence_of_norms} and the triangle inequality for the McShane-Rivin norm that 
\begin{equation}1_{SB_X}*\rho_Q(x)=\begin{cases}1&\text{ if }\MRnorm{x}\leq S-C/Q \\ 0 &\text{ if }\MRnorm{x}>S+C/Q,  \end{cases}\end{equation} and $0\leq 1_{SB_X}*\rho_Q(x)\leq 1$ for all $x\in \RR^2$. Here $C$ is the constant from \eqref{eq:equivalence_of_norms}. 
Assume
\begin{equation}\label{eq:basic_assumption_3}
Q\geq 1,\qquad R>C/Q.
\end{equation}
Define the functions
\begin{equation}1^\pm_{RB_X,Q}(x)=\bigl(1_{(R\pm C/Q)B_X} *\rho_Q\bigr)(x)\end{equation}
The preceding inequalities give \begin{equation}\label{eq:inequalities}1^-_{RB_X,Q}(x)\leq 1_{RB_X}(x)\leq 1^+_{RB_X,Q}(x).\end{equation}
For $f\in L^1(\RR^2)$ its Fourier transform is defined by $\hat f(y)=\int_{\RR^2}f(x)e^{-2\pi i x\cdot y}dx$. The Fourier transform of a convolution is the product of the Fourier transforms, i.e. $\widehat{f*g}=\hat f\hat g$.
We use Poisson summation on $1^\pm_{RB_X,Q}$ and find that 
\begin{equation}\label{eq:Poisson}\sum_{n\in \ZZ^2}1^\pm_{RB_X,Q}(n)=\sum_{n\in \ZZ^2}\widehat{1^\pm_{RB_X,Q}}(n)=\sum_{n\in \ZZ^2}\widehat{1_{(R\pm C/Q)B_X}}(n)\widehat{\rho_Q}(n)\end{equation}

For notational simplicity let $S=R\pm C/Q$. The term corresponding to $n=0$ on the Fourier side gives the main term $\vol(B_X)S^2$, leaving the sum
\begin{equation}\sum_{0\neq n\in \ZZ^2}\widehat{1_{SB_X}}(n)\widehat{\rho_Q}(n)=S^2\sum_{0\neq n\in\ZZ^2}\widehat{1_{B_X}}(Sn)\widehat{\rho}(n/Q).\end{equation}

The strategy is to split the sum as follows: For $0\neq n\in \ZZ^2$ we write $n$ uniquely $n=km$ where $k\in \ZZ^2_{\mathrm{prim}}$, and $m\in \NN$.  We split the sum into two parts according to $\abs{k}\leq K$ and $\abs{k}> K$. Here $K>Q$ is a parameter to be chosen later.  
The first part is 
\begin{equation}
S^2\sum_{\substack{k \textrm{primitive}\\\abs{k}\leq K}} \sum_{m\in \NN} \widehat{1_{B_X}}(Smk)\widehat{\rho}(mk/Q)=\sum_{\substack{k \textrm{primitive}\\\abs{k}\leq K}}M_k(S,Q).\end{equation}
A crucial part in proving our main theorem is to use our  understanding of the geometry of the McShane-Rivin ball to get a good estimate on $M_k(S,Q)$. 

The remaining terms on the right-hand side of \eqref{eq:Poisson} are easier to estimate: their total contribution is bounded by
\begin{equation}S^2\sum_{\abs{n}> K}  \abs{\widehat{1_{B_X}}(Sn)}\abs{\widehat{\rho}(n/Q)}\ll S \sum_{\abs{n}>K}\frac{Q^A}{\abs{n}^{A+1}}\ll S Q^AK^{-(A-1)} 
\end{equation}
for integers $A$ satisfying
\begin{equation}\label{eq:basic_assumption_4}
A>1.
\end{equation}
Here we have used that for $\abs{y}>1$
\begin{equation}
    \abs{\widehat{\rho}(y)}\ll \frac{1} {\abs{y}^A}, \quad \abs{\widehat{1_{B_X}}(y)}\ll \frac{1}{\abs{y}}. 
\end{equation}
The first bound follows by integrating by parts $A$ times in the direction $y/\abs{y}$. For the second, applying the Gauss--Green theorem to the vector field $y e^{-2\pi i x\cdot y}$ expresses $2\pi i\abs{y}^2\widehat{1_{B_X}}(y)$ as a boundary integral of magnitude at most $\abs{y}$ times the perimeter of $B_X$. This gives the stated bound since $B_X$ has finite perimeter (compare \cite[Lemma 2.14]{KoivusaloLagace:2025}).

Assume
\begin{equation}\label{eq:basic_assumption_5}
  Q\to \infty\text{ as }R\to\infty,\qquad K>Q.
\end{equation}
Then
\begin{equation}\label{eq:S_pm-expansion}\sum_{n\in \ZZ^2}1^\pm_{RB_X,Q}(n)=\vol(B_X)R^2+\sum_{\substack{k \textrm{primitive}\\\abs{k}\leq K}}M_k(S,Q)+O(1+R/Q+ R Q^AK^{-(A-1)}).
\end{equation}
This reduces the problem to estimating $M_k(S,Q)$ which we do in the next section.
\section{Bounding Fourier coefficients on a radial line} In the previous section we considered 
\begin{equation}M_k(S,Q)=S^2\sum_{m\in \NN} \widehat{1_{B_X}}(Smk)\widehat{\rho}(mk/Q). \end{equation} We first need a result about sampling concave functions.
\begin{lemma}\label{lem:concave_sampling}
Let $f:\RR\to\RR$ be continuous and nonnegative, with $f\vert_{[0,1]}$ concave
and $f\vert_{[0,1]^c}=0$. Let $0<h<\tfrac18$. Then for every $y\in\RR$,
\begin{equation}
\abs{h\sum_{n\in\ZZ}f(h(n+y))-\int_\RR f(x)\,dx}\le 7 h\bigl(f(h)+f(1-h)\bigr).
\end{equation}
\end{lemma}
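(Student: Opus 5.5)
We compare the Riemann sum with the integral by splitting $\RR$ into the sampling cells of length $h$. Put $x_n=h(n+y)$. The sample points lying in $(0,1)$ are consecutive, say $x_a<x_{a+1}<\dots<x_b$. All other samples contribute zero. Since $h<1/8$, there are at least seven of them, and $x_a\le h$, $x_b\ge 1-h$ by construction. The plan is to handle the interior cells by a convexity comparison and the two end cells by crude bounds in terms of $f(h)$ and $f(1-h)$.

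\textbf{Interior.} Consider the trapezoid sum $T=h\sum_{n=a}^{b-1}\tfrac12\bigl(f(x_n)+f(x_{n+1})\bigr)$. By concavity, on each $[x_n,x_{n+1}]$ the chord lies below $f$, so $T\le\int_{x_a}^{x_b}f$.

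For a matching upper bound on the integral, use the tangent (supporting-line) bound on each cell, or equivalently compare with the midpoint rule. Concavity gives
\[
\int_{x_a}^{x_b}f-T\le\tfrac h2\Bigl(\bigl|f(x_{a+1})-f(x_a)\bigr|+\bigl|f(x_b)-f(x_{b-1})\bigr|\Bigr),
\]
because the total variation of the one-sided slopes telescopes. More concretely, the error on cell $n$ is at most $\tfrac{h^2}{8}\bigl(f'_+(x_n)-f'_-(x_{n+1})\bigr)$, and these decreasing-slope increments telescope to $\tfrac{h^2}{8}\bigl(f'_+(x_a)-f'_-(x_b)\bigr)$.

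Next bound the slopes at the ends by values of $f$. For $x_a\le h\le 1/8$, concavity and nonnegativity give $f'_+(x_a)\le f(x_a)/x_a$, which is dangerous if $x_a$ is tiny. So instead I would start the interior at the first sample $\ge h$ and end at the last sample $\le 1-h$. I then bound the slope by difference quotients over intervals of length $\ge h$: the function is increasing on the left portion where the slope is positive, so $f'_+(x)\le f(x)/x\le f(2h)/h$. The quantity $f(2h)$ is in turn $\le 2f(h)$, by concavity with $f(0)\ge0$. This gives an interior error $\ll h\bigl(f(h)+f(1-h)\bigr)$. The difference between $h\sum f(x_n)$ and $T$ is just $\tfrac h2\bigl(f(x_a)+f(x_b)\bigr)$, which has the same shape.

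\textbf{End pieces.} It remains to account for $\int_0^{x_{a'}}f$ and $\int_{x_{b'}}^1f$, where $x_{a'}\le 2h$ is the first interior node. On $[0,2h]$, concavity together with $f\ge0$ gives $f(x)\le f(h)\max(1,x/h)\le 2f(h)$. Hence this integral is $\le 4hf(h)$, and the omitted samples there contribute at most $2\cdot h\cdot 2f(h)$. The right end is treated symmetrically with $f(1-h)$.

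Summing the end contributions and the interior error should give a constant at most $7$ after bookkeeping. The main obstacle is exactly this constant-tracking: arranging the end intervals so that every boundary quantity is controlled by $f(h)$ or $f(1-h)$. If the count exceeds $7$, I would refine the interior bound by using only the trapezoid inequality together with monotone pieces. Concretely, $f$ is unimodal, so on the increasing part the left Riemann sum underestimates and the right Riemann sum overestimates the integral, and symmetrically on the decreasing part. This sandwiches the sum within a few boundary terms, each $\le h\max f$ near the ends, and again each such term is bounded by $2f(h)$ or $2f(1-h)$.
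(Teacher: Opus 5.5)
Your main line works, and its architecture matches the paper's: a concavity comparison on the interior cells that telescopes, plus end pieces controlled by $f\le 2f(h)$ on $[0,2h]$ and its mirror image. The per-cell device differs. The paper keeps the left-endpoint sum and sandwiches $f$ between the current and previous secant lines, $a_n(x-x_n)\le f(x)-f(x_n)\le a_{n-1}(x-x_n)$ with $a_n=(f(x_{n+1})-f(x_n))/h$. After integrating, the discrepancy lies between $\tfrac{h^2}{2}\sum a_n$ and $\tfrac{h^2}{2}\sum a_{n-1}$, and these telescope directly into $\tfrac h2$ times differences of node values next to $0$ and $1$. No derivatives appear, so the interior can start at the second sample in $[0,1]$ without any blow-up. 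Your trapezoid/triangle bound $\tfrac{h^2}{8}(f'_+(x_n)-f'_-(x_{n+1}))$ is sharper per cell, but it telescopes to one-sided derivatives. That is exactly why you must trim the interior to $[h,1-h]$ and then convert derivatives back into values. The payoff is a smaller interior constant, which is immaterial here.

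Several steps need repair.

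(i) The bound $f(x)\le f(h)\max(1,x/h)$ is false for $0\le x<h$: a steep rise to a peak before $h$ violates it. Using $f(1)\ge 0$ instead gives $f(x)\le f(h)/(1-h)\le\tfrac87 f(h)$ there, so $f\le 2f(h)$ on $[0,2h]$ still holds.

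(ii) The chain $f'_+(x)\le f(x)/x\le f(2h)/h$ is not right as written, since the peak may lie between $x$ and $2h$. Use instead that $t\mapsto f(t)/t$ is nonincreasing on $(0,1]$. Then $f'_+(x_{a'})\le f(x_{a'})/x_{a'}\le f(h)/h$, and symmetrically $-f'_-(x_{b'})\le f(1-h)/h$. This puts the trapezoid defect $T-\int_{x_{a'}}^{x_{b'}}f$ in $[-\tfrac h8(f(h)+f(1-h)),0]$.

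(iii) For the constant, adding the absolute values of your end bounds ($4hf(h)$ for the samples, $4hf(h)$ for the integral, plus the interior term) exceeds $7$. Use the signs instead. The sample terms outside $T$ are nonnegative, while the end integrals and the trapezoid defect enter with a minus sign. So the discrepancy lies between $-(4+\tfrac18)h(f(h)+f(1-h))$ and $4h(f(h)+f(1-h))$, well within $7$.

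(iv) Drop the unimodal fallback. Monotone Riemann-sum comparisons only give an error $O(h\max f)$, and $h\max f$ is not $O(h(f(h)+f(1-h)))$. For the tent $1-|2x-1|$ the former is $\asymp h$ while the latter is $\asymp h^2$. The concavity gain at the peak is essential.
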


\begin{proof} By continuity $f(0)=f(1)=0$. Let $0\leq s\leq h$ and consider $h=\alpha s+(1-\alpha)\cdot 1$ with
$\alpha=\frac{1-h}{1-s}\in[0,1]$. Concavity gives
\begin{equation}
f(h)= f(\alpha s+(1-\alpha) 1)\geq \alpha f(s)\geq (1-h)f(s)\geq \tfrac{7}{8}f(s)\geq \tfrac 1 2 f(s).
\end{equation}
If $h\leq s\leq 2h$ then $h=\tfrac{h}{s}s$ so $f(h)=f(\tfrac{h}{s}s+(1-\tfrac{h}{s})0)\geq \tfrac{h}{s}f(s)\geq \tfrac{1}{2}f(s)$
so 
\begin{equation}\label{eq:first_end_bound}
    f(s)\leq  2 f(h)\textrm{ for }0\leq s\leq 2h.\end{equation} Applying this to $x\mapsto f(1-x)$ gives
\begin{equation}\label{eq:second_end_bound}
f(s)\leq  2  f(1-h) \text{ for } 1-2h \leq  s \leq 1. 
\end{equation}

Let $x_n=h(n+y)$. There are at most finitely  many (and at least 8) $n$ with $x_n\in [0,1]$. If this happens for $N_0\leq n \leq N_1$ then $0\leq x_{N_0}\leq h$ and $1-h\leq x_{N_1}\leq 1$.  
 
For $N_0< n< N_1$ and $x\in [x_n,x_{n+1}]$ concavity implies that $f(x)$ lies above the secant line of the endpoints,
 and below the secant line of the previous interval. Concretely if \begin{equation}a_n=\frac{f(x_{n+1})-f(x_n)}{x_{n+1}-x_n}=\frac{f(x_{n+1})-f(x_n)}{h}\end{equation} then 
\begin{equation}
 a_{n}(x-x_{{n}})+f(x_{n})   \leq f(x)\leq 
a_{n-1}(x-x_{{n}})+f(x_{n})
\end{equation}
so that $a_{n}(x-x_n)\leq f(x)-f(x_n)\leq  a_{n-1}(x-x_n)$.

Integrating from $x_{N_0+1}$ to $x_{N_1}$ gives \begin{equation}
    \sum_{n=N_0+1}^{N_1-1}\frac{a_nh^2}{2}\leq  \int_{x_{N_0+1}}^{x_{N_1}}f(x)dx-h\sum_{n=N_0+1}^{N_1-1}f(x_n)\leq \sum_{n=N_0+1 }^{N_1-1}\frac{a_{n-1}h^2}{2}
\end{equation}

The first and the last sum are telescoping giving $\tfrac{h}{2}$ times either $f(x_{N_1})-f(x_{N_0+1})$ or $f(x_{N_1-1})-f(x_{N_0})$ so both sums are bounded in absolute value by 
$\tfrac{h}{2}2(f(h)+f(1-h))$. Noting that  
    \begin{equation}h\sum_{n\in \ZZ}f(x_n)-h\sum_{n=N_0+1}^{N_1-1}f(x_n)=h(f(x_{N_0})+f(x_{N_1}))\end{equation}
and 
    \begin{equation}\int_{\RR}f(x)dx -\int_{x_{N_{0}+1}}^{x_{N_1}}f(x)dx= \int_0^{x_{N_{0}+1}}f(x)dx+\int_{x_{N_1}}^1f(x)dx \end{equation}
the bound follows from \eqref{eq:first_end_bound} and \eqref{eq:second_end_bound} upon observing that $0\leq x_{N_0},x_{N_0+1}\leq 2h$ and $1-2h\leq x_{N_1-1},x_{N_1}\leq 1$.
\end{proof}

The constant 7 is somewhat arbitrary and can be improved at the cost of making the proof a bit longer. 

\begin{lemma}\label{lem:radial_fourier_sum}
  For $S$ sufficiently large, $Q\geq 1$ and $k\in \ZZ^2_{\mathrm{prim}}$, we have 

  \begin{equation}
    \abs{M_k(S,Q)}\ll\frac{S}{\abs{k}}\diam\left(C\left(\frac{k}{\abs{k}},\frac{1}{S\abs{k}}\right)\right).
  \end{equation}
Assume furthermore that $H\geq 1$, $\abs{k}\leq K$, and \eqref{eq:basic_assumption_1}. Let $d\in (\ZZ^2)^*$ be the integer functional satisfying $d^\#=k$. Then 
\begin{equation}
  \abs{M_k(S,Q)}\ll \frac{1}{\eta\abs{k}^2}+
  \begin{cases}
    S\dfrac{\,s_{H,j(d)}}{\abs{k}} & \text{if }d\text{ is unresolved with parameters $K$, $H$, $\eta$.},\\[3pt]
    0 & \text{otherwise}.
  \end{cases}
\end{equation}
\end{lemma}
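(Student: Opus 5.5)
The plan is to write the radial sum $M_k(S,Q)$ as a Poisson-summed one-dimensional integral and then apply Lemma~\ref{lem:concave_sampling}. Set $\theta=k/\abs{k}$ and define the parallel section function
\begin{equation}
g(t)=\operatorname{length}\{z\in B_X: z\cdot\theta=t\}.
\end{equation}
By Fubini, $\widehat{1_{B_X}}(\xi\theta)=\int_\RR g(t)e^{-2\pi i\xi t}\,dt$. The function $g$ is supported on $[-h_{B_X}(-\theta),h_{B_X}(\theta)]=[-h,h]$ with $h=h_{B_X}(\theta)$, using central symmetry. By Brunn--Minkowski in dimension one (convexity of $B_X$), $g$ is concave there and $g(-t)=g(t)$.

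Since $\widehat\rho$ is real and radial and $g$ is even, the sum over $m\in\NN$ equals half the sum over $m\in\ZZ\setminus\{0\}$. The resulting sum is $\sum_{m\ne0}\widehat g(mS\abs{k})\phi(m\abs{k}/Q)$ with $\phi(r)=\widehat\rho(r)$. Write this as $\sum_{m}\widehat{G}(m)$ minus the $m=0$ term, where $G(t)=S\abs{k}\,(g\ast\psi)(S\abs{k}t)$ and $\widehat\psi(\xi)=\widehat\rho(\xi\abs{k}/(QS\abs{k}))$. Poisson summation in one variable turns $M_k$ into
\begin{equation}
\frac{S}{2\abs{k}}\Bigl(\,\text{Riemann sum of } g\ast\psi \text{ at spacing } \tfrac1{S\abs{k}}\;-\;\int g\ast\psi\Bigr).
\end{equation}
To avoid convolution issues, I would instead average: $g\ast\psi$ is a superposition (with the signed but $L^1$-bounded weight $\psi$) of translates of $g$. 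Each translate satisfies the hypotheses of Lemma~\ref{lem:concave_sampling} after rescaling $[-h,h]$ to $[0,1]$, uniformly in the shift $y$. Since $\int\abs{\psi}\ll1$, the sampling error is controlled by that of $g$ itself.

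Lemma~\ref{lem:concave_sampling} with step $h'=1/(2hS\abs{k})$ (valid for $S$ large) bounds the discrepancy by
\begin{equation}
\ll\frac{1}{S\abs{k}}\Bigl(g\bigl(h-\tfrac{1}{S\abs{k}}\bigr)+g\bigl(-h+\tfrac1{S\abs{k}}\bigr)\Bigr).
\end{equation}
Now $g(h-\delta)$ is the length of a chord inside $C(\theta,\delta)$, so it is at most $\diam C(\theta,\delta)$, and the same holds at the other end by symmetry. Multiplying by the prefactor $S^2\cdot\tfrac{1}{S\abs{k}}$ from the normalization gives
\begin{equation}
\abs{M_k}\ll \frac{S}{\abs{k}}\diam C\bigl(\theta,\tfrac1{S\abs{k}}\bigr),
\end{equation}
which is the first claim. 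The point that needs the most care is bookkeeping of the scaling constants and the weight $\psi$. Since $\widehat\rho(0)=1$ and $\rho\ge0$ has mass $1$, the weight is actually a probability measure, and the averaging step is harmless.

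For the second claim, insert Lemma~\ref{lem:cap_diameter} with $\delta=1/(S\abs{k})$. If $\theta\notin\bigcup_j J^\eta_{H,j}$ the diameter is $\ll 1/(\eta S\abs{k})$, giving $\abs{M_k}\ll 1/(\eta\abs{k}^2)$. If $\theta\in J^\eta_{H,j}$, then since $\abs{k}\le K$ the covector $d$ is unresolved with $j=j(d)$. The diameter is then $\ll s_{H,j(d)}+1/(\eta S\abs{k})$, which yields the extra term $S s_{H,j(d)}/\abs{k}$. The main obstacle is justifying the reduction to Lemma~\ref{lem:concave_sampling} rigorously: concavity of sections, the rescaling of the support, and the uniformity in the shift.
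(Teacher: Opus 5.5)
Your proposal is correct and follows essentially the same route as the paper: the slice function along $\theta=k/\abs{k}$, one-dimensional Poisson summation at spacing $1/(S\abs{k})$, averaging Lemma~\ref{lem:concave_sampling} over the shifts coming from the projected mollifier (a probability density), bounding the two end slices by cap diameters, and then applying Lemma~\ref{lem:cap_diameter}. Three bookkeeping points need repair. The rescaled function should be $G(t)=(S\abs{k})^{-1}(g\ast\psi)(t/(S\abs{k}))$, so that $\widehat G(m)=\widehat{g\ast\psi}(mS\abs{k})$. The prefactor in front of the normalized discrepancy is $S^2/2$. Finally, you should note that $g$ vanishes at $\pm h_{B_X}(\theta)$ by strict convexity of $B_X$, so that $g$ is continuous as Lemma~\ref{lem:concave_sampling} requires.
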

\begin{proof}
Put $\theta=k/\abs{k}$ and $h=(S\abs{k})^{-1}$. Choose a unit vector
$e\perp\theta$ and define
\begin{equation}
 f(t)=\int_{\RR}1_{B_X}(t\theta+se)\,ds.
\end{equation}
Since $e$ is a unit vector, $f(t)$ is the one-dimensional Euclidean length of
$B_X\cap\{x:x\cdot\theta=t\}$. Let $[a,b]$ be the projection of $B_X$
onto the $\theta$-axis. For $t\in[a,b]$, write
\begin{equation}
 B_X\cap\{x:x\cdot\theta=t\}
 =\{t\theta+se:\ell(t)\le s\le r(t)\}.
\end{equation}
Convexity of $B_X$ makes $\ell$ convex and $r$ concave.
Hence $f=r-\ell$ is concave on $[a,b]$ (This is the planar slice case of Brunn-Minkowski \cite[Thm. 7.1.1]{Schneider:2014}) and therefore continuous on $(a,b)$ (\cite[Thm. 2.2]{Gruber:2007}).
Strict convexity makes each supporting slice at $a$ and $b$ a single point.
By compactness, nearby slice lengths tend to zero: otherwise their endpoints
would have two distinct limit points on the same supporting line.
Thus $f(a)=f(b)=0$, and $f$, which is zero outside $[a,b]$, is continuous
on $\RR$. Finally, $B_X=-B_X$ gives $f(-t)=f(t)$.

Define the smoothed slice function
\begin{equation}
 g(t)=\int_{\RR^2}\rho_{SQ}(y)f(t-y\cdot\theta)\,dy.
\end{equation}
Here $\rho_{SQ}(y)=(SQ)^2\rho(SQy)$. The function $g$ is smooth and compactly
supported, and Fubini's theorem together with Fourier scaling gives
\begin{equation}
 \widehat g(m/h)=\widehat{1_{B_X}}(Smk)\widehat\rho(mk/Q).
\end{equation}
Moreover, $g$ is even because $f$ and $\rho_{SQ}$ are even. Thus
one-dimensional Poisson summation yields
\begin{equation}\label{eq:poisson_slice_discrepancy}
\begin{aligned}
 \frac{2M_k(S,Q)}{S^2}
 &=h\sum_{n\in\ZZ}g(nh)-\int_\RR g(t)\,dt\\
 &=\int_{\RR^2}\rho_{SQ}(y)\left[
 h\sum_{n\in\ZZ}f(nh-y\cdot\theta)
 -\int_\RR f(t)\,dt\right]dy.
\end{aligned}
\end{equation}
For sufficiently large $S$, we have $h/(b-a)<1/8$ uniformly in $k$, since
$b-a$ is bounded below and $\abs{k}\ge1$. Rescaling
Lemma~\ref{lem:concave_sampling} from $[0,1]$ to $[a,b]$ therefore gives,
for every $u\in\RR$,
\begin{equation}\label{eq:shifted_slice_discrepancy}
 \abs{h\sum_{n\in\ZZ}f(nh-u)-\int_\RR f(t)\,dt}
 \le 7h\bigl(f(a+h)+f(b-h)\bigr).
\end{equation}
For each $y\in\RR^2$, the expression in square brackets in
\eqref{eq:poisson_slice_discrepancy} is the quantity inside the absolute value
in \eqref{eq:shifted_slice_discrepancy} with $u=y\cdot\theta$.
Since $\rho_{SQ}\ge0$ and $\int_{\RR^2}\rho_{SQ}=1$,
\begin{equation}
 \abs{M_k(S,Q)}
 \le\frac{7S}{2\abs{k}}\bigl(f(a+h)+f(b-h)\bigr).
\end{equation}
The two slices lie in $C(-\theta,h)$ and $C(\theta,h)$, respectively,
so their lengths are bounded by the corresponding cap diameters.
Since $B_X=-B_X$, we have $C(-\theta,h)=-C(\theta,h)$, and hence these
diameters are equal.
Consequently,
\begin{equation}
 \abs{M_k(S,Q)}
 \ll\frac{S}{\abs{k}}
 \diam\left(C\!\left(\theta,\frac{1}{S\abs{k}}\right)\right).
\end{equation}
For the second estimate, since $\abs{k}\le K$, the covector $d$ is unresolved precisely
when $\theta\in\bigcup_j J_{H,j}^\eta$. Applying
Lemma~\ref{lem:cap_diameter} to the preceding bound and using
$Sh/(\eta\abs{k})=1/(\eta\abs{k}^2)$ proves the claim.
\end{proof}

\begin{cor}\label{cor:colinear_bound}
Let $X$ be the hyperbolic modular punctured torus. Let $S,H$ be sufficiently
large, let $K,Q\geq1$, and assume \eqref{eq:basic_assumption_1}. Then
\begin{equation}\label{eq:colinear_bound}
 \sum_{\substack{k\in\ZZ^2_{\mathrm{prim}}\\\abs{k}\le K}}
 \abs{M_k(S,Q)}
 \ll \frac{\log(2K)}{\eta}+SH^{-3/2}+SK e^{-cH},
\end{equation}
where $c=\min(\kappa,\Kappa)>0$, with $\kappa,\Kappa$ as in
Theorem~\ref{thm:DoanLiNguyen}. The implied constant depends only on $X$.
\end{cor}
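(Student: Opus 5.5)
We sum the second estimate of Lemma~\ref{lem:radial_fourier_sum} over all primitive $k$ with $\abs{k}\le K$, writing $d$ for the integral covector with $d^\#=k$. The estimate splits into two pieces, which we bound separately. The first piece is $\sum_{\abs{k}\le K}\frac{1}{\eta\abs{k}^2}$. The number of primitive $k$ with $\abs{k}\le t$ is $\ll t^2$, so by dyadic decomposition (or summation by parts) this sum is $\ll \eta^{-1}\log(2K)$. This gives the first term in \eqref{eq:colinear_bound}.

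The second piece is $S\sum s_{H,j(d)}/\abs{d}$, taken over primitive unresolved $d$ with $\abs{d}\le K$. We would like to apply \eqref{eq:unresolved_bound} directly, which gives $\ll SH^{-3/2}$. However, \eqref{eq:unresolved_bound} was derived under \eqref{eq:basic_assumption_2}, namely $H\ge D_X\log(2K)$, and the corollary does not assume this. The extra term $SKe^{-cH}$ is there to absorb the case where it fails. So we split the unresolved $d$ by the threshold $T_0$ defined by $D_X\log(2T_0)=H$.

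\emph{Small $d$, with $\abs{d}\le \min(K,T_0)$.} These covectors are unresolved with parameters $\min(K,T_0),H,\eta$, and for these parameters \eqref{eq:basic_assumption_2} holds. We then use Lemma~\ref{lemma:counting-unresolved}: $M_H(T)\ll\log^2(2T)$. We combine it with Lemma~\ref{lem:unresolved_sizes}: $s_{H,j(d)}\ll H^{-3/2}\abs{d}^{1/2}$. Summation by parts of $\abs{d}^{-1/2}$ against $dM_H(T)$ gives
\[
H^{-3/2}\Bigl(\log^2(2T_0)\,T_0^{-1/2}+\int_1^{T_0}\log^2(2T)\,T^{-3/2}\,dT\Bigr)\ll H^{-3/2}.
\]
Multiplying by $S$ gives the term $SH^{-3/2}$.

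\emph{Large $d$, with $T_0<\abs{d}\le K$.} Here we do not try to count. Since the arcs $J_{H,j}^\eta$ are disjoint, each unresolved $d$ has a unique index $j(d)$. Its contribution is $s_{H,j(d)}/\abs{d}\le s_{H,j(d)}/T_0$, but summing this naively over $d$ loses a factor. Instead we bound it by $s_{H,j(d)}\ll 1/H$ (Lemma~\ref{lem:arc_length} with $A\ge1$) times the number of such $d$, which is $\ll K^2$. Since $T_0=\tfrac12 e^{H/D_X}$, we have $1/T_0\ll e^{-H/D_X}$, which is exponentially small in $H$. To match the stated form $SKe^{-cH}$ we sum more carefully. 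Group the $d$ by arc $j$: every $d$ in arc $j$ has direction within $\abs{J^\eta_{H,j}}\ll e^{-cH}$ of a fixed direction, so the number of primitive $k$ in that angular sector with $T_0<\abs{k}\le K$ is $\ll 1+K^2e^{-cH}$. Summing $s_{H,j}/\abs{k}$ over the sector gives $s_{H,j}(\log(K/T_0)\cdot\#\text{directions})$, and then $\sum_j s_{H,j}\le$ perimeter. A cleaner route is to sum over radius: in each sector the sum of $1/\abs{k}$ over lattice points with $\abs{k}\le K$ is $\ll Ke^{-cH}+1/T_0$. Summing over $j$ with weights $s_{H,j}$, and using $\sum_j s_{H,j}\ll1$, gives $\ll Ke^{-cH}+e^{-H/D_X}$. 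The second term is $\ll H^{-3/2}$, and we may shrink $c$ if needed, since the statement only fixes $c=\min(\kappa,\Kappa)$; with a possibly smaller constant, or by noting $e^{-H/D_X}\ll H^{-3/2}$, this is absorbed.

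The main obstacle is the large-$d$ bookkeeping. We need the lattice-point count in thin sectors, $\#\{k:\abs{k}\le t,\ \text{angle in an arc of length }\alpha\}\ll 1+\alpha t^2$, to combine with the arc lengths $s_{H,j}$ so as to yield exactly $SKe^{-cH}$. We also need to make sure that $S$ large and $H$ large handle the lower-order terms. If this does not close cleanly, the fallback is to use only $\abs{J^\eta_{H,j}}\ll e^{-cH}$ together with $\sum_j s_{H,j}\ll1$, as in the sector summation described above.
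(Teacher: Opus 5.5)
Your proposal is correct and is essentially the paper's argument. You split the unresolved $d$ at a threshold and handle the small range with \eqref{eq:unresolved_bound}; your $T_0=\tfrac12e^{H/D_X}$ replaces the paper's $L=H^{3/2}$, which is harmless since $1/T_0\ll H^{-3/2}$. The large range is handled by partial summation against $N_j(t)\ll1+\abs{J_{H,j}^\eta}\,t^2$ together with $\sum_j s_{H,j}\ll1$. The thin-sector count you flag as the main obstacle follows at once from the determinant argument in the proof of Lemma~\ref{lemma:counting-unresolved}: two distinct primitive vectors of length at most $t$ at angle $\alpha<\pi$ satisfy $1\le\abs{\det}\le t^2\alpha$. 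That is exactly how the paper closes it, so you need neither the fallback nor any shrinking of $c$ (which the statement does not allow anyway).
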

\begin{proof}
By Lemma~\ref{lem:radial_fourier_sum} and the elementary lattice estimate
$\sum_{0<\abs{k}\le K}\abs{k}^{-2}\ll\log(2K)$, it suffices to show that
\begin{equation}\label{eq:unresolved_weighted_general}
 \sum_{\substack{d\text{ primitive and unresolved}\\\abs{d}\le K}}
 \frac{s_{H,j(d)}}{\abs{d}}
 \ll H^{-3/2}+K e^{-cH}.
\end{equation}
Put $L=H^{3/2}$. For sufficiently large $H$ we have
$H\ge D_X\log(2L)$, so \eqref{eq:unresolved_bound}, applied with
cutoff $\min(K,L)$, bounds the contribution from $\abs{d}\le L$ by
$O(H^{-3/2})$. If $K\le L$, this proves the claim.

Suppose $K>L$. By Theorem~\ref{thm:DoanLiNguyen}\eqref{thm:DoanLiNguyen_uno}
and \eqref{eq:basic_assumption_1}, we have
$\abs{J_{H,j}^\eta}\ll e^{-cH}$. Set $\alpha_j=\abs{J_{H,j}^\eta}$, and let
$N_j(T)$ count primitive integral covectors $d$ with $\abs{d}\le T$ and
$d^\#/\abs{d}\in J_{H,j}^\eta$. The determinant argument in the proof of
Lemma~\ref{lemma:counting-unresolved} shows that distinct such directions
are separated by at least $T^{-2}$. Hence
\begin{equation}
 N_j(T)\ll1+\alpha_j T^2\ll1+T^2e^{-cH}.
\end{equation}
Partial summation now gives
\begin{equation}
\begin{aligned}
 \sum_{\substack{d\text{ primitive and unresolved}\\
                  L<\abs{d}\le K,\ j(d)=j}}\frac1{\abs{d}}
 &=\frac{N_j(K)}K-\frac{N_j(L)}L+\int_L^K\frac{N_j(t)}{t^2}\,dt\\
 &\le\frac{N_j(K)}K+\int_L^K\frac{N_j(t)}{t^2}\,dt
 \ll L^{-1}+K e^{-cH}.
\end{aligned}
\end{equation}
Multiplying by $s_{H,j}$ and summing over $j$, we use
$\sum_j s_{H,j}=\operatorname{length}(\partial B_X)$ to bound the remaining
contribution by $O(H^{-3/2}+K e^{-cH})$. Together with the first part,
this proves \eqref{eq:unresolved_weighted_general} and the corollary.
\end{proof}

\begin{thm}\label{thm:lattice_count}
Let $X$ be the hyperbolic modular punctured torus, and let
$N_X(R)=\#(RB_X\cap\ZZ^2)$ be the full lattice-point count. Then, as
$R\to\infty$,
\begin{equation}
 N_X(R)=\vol(B_X)R^2+
 O\!\left(\frac{R}{(\log R)^{3/2}}\right).
\end{equation}
\end{thm}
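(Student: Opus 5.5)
The plan is to sandwich $N_X(R)$ between the two smoothed sums in \eqref{eq:inequalities}. Summing over $n\in\ZZ^2$ gives
\begin{equation}
\sum_n 1^-_{RB_X,Q}(n)\le N_X(R)\le \sum_n 1^+_{RB_X,Q}(n).
\end{equation}
Each side is expanded via \eqref{eq:S_pm-expansion}. Here $S=R\pm C/Q$, so $S\asymp R$ and the main terms $\vol(B_X)S^2$ already differ from $\vol(B_X)R^2$ by $O(R/Q)$; this is absorbed in the error of \eqref{eq:S_pm-expansion}. It then remains to bound $\sum_{\abs{k}\le K}\abs{M_k(S,Q)}$, which is done by Corollary~\ref{cor:colinear_bound}. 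Altogether this gives
\begin{equation}
N_X(R)-\vol(B_X)R^2\ll 1+\frac RQ+\frac{RQ^A}{K^{A-1}}+\frac{\log(2K)}{\eta}+RH^{-3/2}+RKe^{-cH}.
\end{equation}

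Next I choose the parameters so that every term is $O(R(\log R)^{-3/2})$, with $RH^{-3/2}$ the dominant one. I take $H=B\log R$ with $B$ a large constant, and $Q=(\log R)^{3/2}$, so that $R/Q$ is acceptable. I take $K=R^{1/2}$ (so $K>Q$), and $A$ a fixed integer large enough that $RQ^AK^{-(A-1)}=O(1)$; here $A=4$ suffices. Then $RKe^{-cH}=R^{3/2-cB}$, which is $O(1)$ once $cB\ge 3/2$. For \eqref{eq:basic_assumption_1} I set
\begin{equation}
\eta=\frac{m}{2+\varepsilon}e^{-\Kappa H}=\frac{m}{2+\varepsilon}R^{-\Kappa B}
\end{equation}
with a fixed $\varepsilon$, say $\varepsilon=1$. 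This forces
\begin{equation}
\frac{\log(2K)}{\eta}\asymp R^{\Kappa B}\log R,
\end{equation}
which is far too large if $B$ is big.

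This is the main obstacle: $\eta$ has to be exponentially small in $H$, while the term $\log(2K)/\eta$ wants $\eta$ not much smaller than $(\log R)^{5/2}/R$. The two requirements have to be balanced. I would take $H=\beta\log R$ with $\beta$ small, specifically $\Kappa\beta<1$, so that
\begin{equation}
\frac{\log K}{\eta}\ll R^{\Kappa\beta}\log R=o\bigl(R(\log R)^{-3/2}\bigr).
\end{equation}
The cost is that $RKe^{-cH}=RK R^{-c\beta}$ is no longer small if $K=R^{1/2}$. So $K$ has to be shrunk as well: $K=R^{\gamma}$ with $\gamma<c\beta$, which makes $RKe^{-cH}\le R^{1-(c\beta-\gamma)}$, a power saving. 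With $K$ a small power of $R$, I keep $Q=(\log R)^{3/2}\le K$ and take $A$ large, depending on $\gamma$, so that $RQ^AK^{-(A-1)}\ll R^{1-\gamma(A-1)}(\log R)^{3A/2}=O(1)$.

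With these choices the term $RH^{-3/2}\asymp R(\log R)^{-3/2}$ dominates. It remains to check the side conditions. The requirements $S$, $H$ large hold for large $R$. Conditions \eqref{eq:basic_assumption_3}, \eqref{eq:basic_assumption_4} and \eqref{eq:basic_assumption_5} hold by the choices above. Finally, Corollary~\ref{cor:colinear_bound} used \eqref{eq:unresolved_bound}, which requires $H\ge D_X\log(2\min(K,H^{3/2}))$; with $\min(K,H^{3/2})=H^{3/2}$ this is automatic for large $H$. These checks complete the proof.
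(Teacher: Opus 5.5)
Your proof is correct and follows the paper's argument. You sandwich $N_X(R)$ via \eqref{eq:inequalities}, expand with \eqref{eq:S_pm-expansion}, bound the radial sums by Corollary~\ref{cor:colinear_bound}, and take $H\asymp\log R$ with $\Kappa H<\log R$, so that $\log(2K)/\eta$ is a power saving while $RH^{-3/2}$ dominates.

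The only difference is the choice of parameters. The paper takes $Q=(\log R)^2$, $K=(\log R)^4$, $A=3$, $H=\log R/(4\Kappa)$ and $\eta=R^{-1/2}$, which also makes \eqref{eq:basic_assumption_2} hold outright. Your choice of $K$ as a small power of $R$ forces a large $A$ but is equally valid. Your remark that Corollary~\ref{cor:colinear_bound} only needs $H\ge D_X\log(2H^{3/2})$, not \eqref{eq:basic_assumption_2} with the full $K$, is correct.
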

\begin{proof}
Recall that
\begin{equation}\label{eq:S_X-expression}
 N_X(R)=\sum_{n\in\ZZ^2}1_{RB_X}(n).
\end{equation}
The inequalities \eqref{eq:inequalities} bound this sum between the two
smoothed sums. For $S_\pm=R\pm C/Q$ we have $S_\pm\asymp R$ when $R$
is large. Thus \eqref{eq:S_pm-expansion}, with $A=3$, and
Corollary~\ref{cor:colinear_bound} give
\begin{equation}\label{eq:lattice_count_error}
\begin{split}
 \abs{N_X(R)-\vol(B_X)R^2}
 \ll {}&\frac{\log(2K)}\eta+RH^{-3/2}+RK e^{-cH}\\
       &+1+\frac RQ+\frac{RQ^3}{K^2},
\end{split}
\end{equation}
provided \eqref{eq:basic_assumption_1}, \eqref{eq:basic_assumption_2},
\eqref{eq:basic_assumption_3}, \eqref{eq:basic_assumption_4}, and
\eqref{eq:basic_assumption_5} hold and $S,H$ are sufficiently large.

Put $L=\log R$, and choose
\begin{equation}
 Q=L^2,\qquad K=L^4,\qquad
 H=\frac{L}{4\Kappa},\qquad \eta=R^{-1/2},
\end{equation}
where $\Kappa$ is the constant in \eqref{eq:basic_assumption_1}.
All the required conditions hold for sufficiently large $R$:
$Q\to\infty$, $K>Q\ge1$, $R>C/Q$, $A=3>1$, and
$H/\log(2K)\to\infty$, while $S_\pm,H\to\infty$. In particular,
\begin{equation}
 m e^{-\Kappa H}=mR^{-1/4}\ge(2+\varepsilon)R^{-1/2}.
\end{equation}
Writing $\delta=c/(4\Kappa)>0$, the right-hand side of
\eqref{eq:lattice_count_error} is therefore
\begin{equation}
 \ll R^{1/2}\log L+\frac{R}{L^{3/2}}
       +R^{1-\delta}L^4+1+\frac{R}{L^2}
 \ll\frac{R}{L^{3/2}}.
\end{equation}
This proves the theorem.
\end{proof}

The parameter choices above are not unique, but changing them cannot improve
the logarithmic exponent in \eqref{eq:lattice_count_error}. Indeed,
\eqref{eq:basic_assumption_1} and $K>Q\ge1$ give
$\log(2K)/\eta\gg e^{\Kappa H}$. Keeping this term of order at most
$R/(\log R)^{3/2}$ forces $H=O(\log R)$, while the term $RH^{-3/2}$ is then
at least of order $R/(\log R)^{3/2}$.

\section{Counting essential simple closed geodesics}
Theorem~\ref{thm:lattice_count} gives an improved error term for the full
lattice-point count. Apart from the origin, this count records the positive
iterates of oriented essential simple closed geodesics. We now pass to
primitive lattice points, which count the oriented essential simple geodesics themselves.
 This is done by an inclusion-exclusion argument on the result in Theorem~\ref{thm:lattice_count} using the M\"obius function. In order to control the error term in such an argument we need some control over the zeroes of the Riemann zeta function.

\begin{thm}\label{thm:primitive_count}
Assume that $\zeta(s)$ has no zeros in $\Re(s)>1-\delta_0$ for some fixed
$\delta_0>0$. Let $P_X(R)$ be the number of oriented simple
essential closed geodesics on the modular punctured torus $X$ of length
at most $R$. Then, as $R\to\infty$,
\begin{equation}
 P_X(R)=\frac{\vol(B_X)}{\zeta(2)}R^2+
 O\!\left(\frac{R}{(\log R)^{1/2}}\right).
\end{equation}
\end{thm}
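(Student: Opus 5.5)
We will pass from the full count $N_X$ to the primitive count $P_X$ by M\"obius inversion. Every nonzero $n\in\ZZ^2$ is uniquely $n=dv$ with $d\in\NN$ and $v$ primitive, and $\MRnorm{dv}=d\MRnorm{v}$. Hence
\begin{equation}
N_X(R)-1=\sum_{d\ge1}P_X(R/d),
\qquad
P_X(R)=\sum_{d\le R/c_0}\mu(d)\bigl(N_X(R/d)-1\bigr),
\end{equation}
where $c_0>0$ is the length of the shortest essential simple closed geodesic. Only finitely many terms occur, since $N_X(R/d)=1$ once $R/d<c_0$. Write $E(y)=N_X(y)-\vol(B_X)y^2$. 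Theorem~\ref{thm:lattice_count} gives $E(y)\ll y/(\log y)^{3/2}$ for $y$ large, and the Jarn\'{\i}k--Steinhaus bound \eqref{eq:convex_bound_JS} gives $E(y)\ll y+1$ for all $y$. Then
\begin{equation}
P_X(R)=\vol(B_X)R^2\sum_{d\le R/c_0}\frac{\mu(d)}{d^2}-\sum_{d\le R/c_0}\mu(d)+\sum_{d\le R/c_0}\mu(d)E(R/d).
\end{equation}
Completing the first sum to $1/\zeta(2)$ costs $O(R^2\cdot d^{-1}|_{d=R/c_0})=O(R)$. That is too large, so this term must be handled together with the error sum.

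To deal with this, split the range at $D=R/T$ with $T=\exp(\sqrt{\log R})$ (to be tuned), and write $\mu(d)E(R/d)$ in the original form.

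\textbf{Small $d$, $d\le D$.} Here $R/d\ge T$ is large, so Theorem~\ref{thm:lattice_count} applies:
\begin{equation}
\sum_{d\le D}\frac{R/d}{(\log(R/d))^{3/2}}\ll R\sum_{d\le D}\frac{1}{d(\log(R/d))^{3/2}}\ll \frac{R}{(\log T)^{1/2}}.
\end{equation}
The last step is the integral $\int (\log(R/t))^{-3/2}\,dt/t$, which is dominated by $d$ near $D$. This fixes the constraint $\log T\gg\log R$ if we are to reach $(\log R)^{-1/2}$. So take $T=R^{\theta}$ with a fixed small $\theta>0$, giving the bound $R/(\log R)^{1/2}$.

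\textbf{Large $d$, $D<d\le R/c_0$.} Here $R/d\le T=R^\theta$ is bounded by a power of $R$, and a trivial estimate loses too much: summing $R/d$ over this range gives $R\log T$. We therefore exploit cancellation in $\mu$. Write the tail as
\begin{equation}
\sum_{D<d\le R/c_0}\mu(d)\bigl(N_X(R/d)-1\bigr)=\sum_{d>D}\mu(d)\sum_{\substack{v\ \mathrm{prim}\\ e\ge1}}1_{e d\MRnorm{v}\le R}.
\end{equation}
Then exchange the order: for each $n=ev\neq0$ with $\MRnorm{n}\le R/D=T$, we sum $\mu(d)$ over $D<d\le R/\MRnorm{n}$. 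The quasi-Riemann hypothesis gives $M(x)=\sum_{d\le x}\mu(d)\ll x^{1-\delta_1}$ for some $\delta_1>0$, so each inner sum is $\ll (R/\MRnorm{n})^{1-\delta_1}$. Summing over the $\ll T^2$ lattice points with $\MRnorm{n}\le T$ gives
\begin{equation}
R^{1-\delta_1}\sum_{\MRnorm{n}\le T}\MRnorm{n}^{\delta_1-1}\ll R^{1-\delta_1}T^{1+\delta_1}.
\end{equation}
This is $O(R^{1-\epsilon})$ once $\theta<\delta_1/(1+\delta_1)$. The main term's tail, $R^2\sum_{d>D}\mu(d)d^{-2}$, is handled by partial summation with the same bound on $M(x)$, giving $R^2D^{-1-\delta_1}=R^{1-\delta_1+\theta(1+\delta_1)}$, which is again a power saving. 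The sum $\sum_{d\le R/c_0}\mu(d)$ is $O(R^{1-\delta_1})$.

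Combining the pieces gives $P_X(R)=\vol(B_X)R^2/\zeta(2)+O(R(\log R)^{-1/2})$.

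The main obstacle is the balancing in the large-$d$ range. The saving $(\log R)^{-3/2}$ from Theorem~\ref{thm:lattice_count} is only logarithmic, so the small-$d$ range can only be pushed down to $R/d\ge R^{\theta}$. The whole remaining range of $d$ must then be controlled by cancellation in $\mu$ with a power saving, and this is exactly where the zero-free region $\Re(s)>1-\delta_0$ is needed. The point to check carefully is that the switch of summation order in the large-$d$ range is legitimate and that the two applications of the Möbius bound, to the lattice sum and to the main-term tail, both yield power savings for the same choice of $\theta$.
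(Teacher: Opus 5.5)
Your proposal is correct and follows the paper's proof essentially step for step. The paper does the same things: M\"obius inversion; a split at $D=R/Y$ with $Y=R^{a}$; Theorem~\ref{thm:lattice_count} on $g\le D$, giving $O(R/\sqrt{\log Y})$; for $g>D$, an interchange into a sum over lattice points of norm $\le Y$ of Mertens-function differences; and partial summation for the tail of $\sum\mu(g)/g^2$. Your bound $R^{1-\delta_1}T^{1+\delta_1}$ is exactly the paper's $RY\Delta(R/Y)$ term, under the same constraint $a<\delta/(1+\delta)$.
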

\begin{proof}
Put $N_X^*(t)=N_X(t)-1$ and $A=\vol(B_X)$. Grouping nonzero lattice
points according to the greatest common divisor of their coordinates,
and then applying M\"obius inversion, gives
\begin{equation}
 N_X^*(R)=\sum_{g\ge1}P_X(R/g),
 \qquad
 P_X(R)=\sum_{g\ge1}\mu(g)N_X^*(R/g).
\end{equation}
Both sums are finite, since the norm of a nonzero lattice point is bounded
away from zero.

Choose $Y\ge e^2$ and put $D=R/Y\ge2$. For $g\le D$, Theorem~\ref{thm:lattice_count}
and an integral comparison give
\begin{align*}
 \sum_{g\le D}\mu(g)N_X^*(R/g)
 &=AR^2\sum_{g\le D}\frac{\mu(g)}{g^2}
   +O\!\left(R\sum_{g\le D}
       \frac{1}{g(\log(R/g))^{3/2}}\right)\\
 &=AR^2\sum_{g\le D}\frac{\mu(g)}{g^2}
   +O\!\left(\frac{R}{\sqrt{\log Y}}\right).
\end{align*}
For the remaining terms, introduce the Mertens function and put
\begin{equation}
 M_\mu(t)=\sum_{g\le t}\mu(g),
 \qquad
 \Delta(D)=\sup_{t\ge D}\frac{\abs{M_\mu(t)}}t.
\end{equation}
Interchanging the two finite sums then gives
\begin{equation}
 \sum_{g>D}\mu(g)N_X^*(R/g)
 =\sum_{\substack{0\neq v\in\ZZ^2\\ \MRnorm{v}\le Y}}
   \left(M_\mu\!\left(\frac{R}{\MRnorm{v}}\right)-M_\mu(D)\right).
\end{equation}
Norm equivalence gives $N_X^*(Y)\ll Y^2$ and
$\sum_{0<\MRnorm{v}\le Y}\MRnorm{v}^{-1}\ll Y$. The last display is
therefore $O(RY\Delta(D))$.
Partial summation also gives
\begin{equation}
 \abs{\sum_{g>D}\frac{\mu(g)}{g^2}}\ll\frac{\Delta(D)}D.
\end{equation}
Since $\sum_{g\ge1}\mu(g)/g^2=1/\zeta(2)$, we have proved the general estimate
\begin{equation}\label{eq:primitive_count_cutoff}
 P_X(R)=\frac{A}{\zeta(2)}R^2+
 O\!\left(\frac{R}{\sqrt{\log Y}}+RY\Delta(R/Y)\right).
\end{equation}

The fixed zero-free hypothesis implies
$M_\mu(t)\ll t^{1-\delta}$ for some $\delta>0$, by the Perron
contour shift used under RH in
\cite[\S13.2, p.~441]{MontgomeryVaughan:2007} and
\cite[p.~48]{Ivic:1985}, with the new line chosen inside the assumed
zero-free half-plane.
Thus $\Delta(D)\ll D^{-\delta}$. Choose $Y=R^a$ with a fixed
$0<a<\delta/(1+\delta)$. The second error in
\eqref{eq:primitive_count_cutoff} is then
$O(R^{1-\delta+a(1+\delta)})=o(R/\sqrt{\log R})$, while the first is
$O(R/\sqrt{\log R})$, as required.
\end{proof}

\begin{remark}
\label{rem:unconditional_count}
Without a fixed zero-free half-plane, the classical
Vinogradov--Korobov estimate gives
\begin{equation}
 M_\mu(t)\ll t e^{-c\omega(t)},\qquad
 \omega(t)=\frac{(\log t)^{3/5}}{(\log\log t)^{1/5}},
\end{equation}
for some $c>0$ and sufficiently large $t$; see
\cite[Chap.~V, \S5, Satz~3, p.~191]{Walfisz:1963}.
Consequently $\Delta(D)\ll e^{-c\omega(D)}$. Taking
$Y=\exp(c\omega(R)/4)$ in \eqref{eq:primitive_count_cutoff}, we have
$\omega(R/Y)\sim\omega(R)$, so
$Y\Delta(R/Y)\ll e^{-c\omega(R)/4}$. This proves the unconditional bound
\begin{equation}
 P_X(R)=\frac{\vol(B_X)}{\zeta(2)}R^2+
 O\!\left(\frac{R(\log\log R)^{1/10}}{(\log R)^{3/10}}\right).
\end{equation}
The classical zero-free region of de la Vallée Poussin
\cite{DelaValleePoussin:1899} gives
$M_\mu(t)\ll t e^{-c\sqrt{\log t}}$; see
\cite[Eq.~(6.17), p.~182]{MontgomeryVaughan:2007}. Taking
$Y=\exp(a\sqrt{\log R})$ for sufficiently small fixed $a>0$ gives
the error $O(R/(\log R)^{1/4})$. 

The prime number theorem alone gives the weaker bound $M_\mu(t)=o(t)$,
which suffices to prove an error term of $o(R)$. Indeed, in
\eqref{eq:primitive_count_cutoff}, first let $R\to\infty$ with $Y$ fixed.
Since $\Delta(R/Y)\to0$, the absolute error divided by $R$ has limsup
$O((\log Y)^{-1/2})$. Letting $Y\to\infty$ proves the claim.
\end{remark}

Finally we note that Theorem \ref{thm:main} with \begin{equation}C=\frac{1}{12}\frac{\vol{(B_X)}}{\zeta(2)}4=\frac{2\vol{(B_X)}}{\pi^2}\end{equation} follows from Theorem~\ref{thm:primitive_count} and Remark~\ref{rem:unconditional_count}, together with \eqref{eq:markov_primitive_count} and \eqref{eq:markov_geodesic_count}.

\section*{Declaration of AI use}
The author used ChatGPT/Codex (OpenAI) and Claude (Anthropic) to assist with exposition, literature search,
exploring and checking mathematical arguments, and developing code
for computations and figures. The author independently verified and
revised all resulting material extensively, and takes full responsibility for
the final manuscript, including its mathematical arguments,
numerical results, and references.

\bibliographystyle{amsplain}
\providecommand{\bysame}{\leavevmode\hbox to3em{\hrulefill}\thinspace}
\providecommand{\MR}{\relax\ifhmode\unskip\space\fi MR }
\providecommand{\MRhref}[2]{%
  \href{http://www.ams.org/mathscinet-getitem?mr=#1}{#2}
}
\providecommand{\href}[2]{#2}

\enlargethispage{2\baselineskip}

\end{document}